\documentclass[reqno]{amsart}

\newcommand{\papertitle}{Weighted blowups and 3d Poisson desingularizations}

\usepackage{xargs}
\usepackage{amsfonts,amsmath,amssymb,amsthm}
\usepackage{tikz-cd}
\usepackage{mathscinet}
\usepackage{mathrsfs}

\usepackage[font=small,labelfont=bf]{caption}
\numberwithin{equation}{section}

\usepackage{subcaption}

\usepackage{xcolor,hyperref}
\definecolor{urlcolor}{rgb}{.2,.2,.6}
\definecolor{linkcolor}{rgb}{.1,.1,.5}
\definecolor{citecolor}{rgb}{.4,.2,.1}

\hypersetup{
	backref=true,
	colorlinks=true,
	urlcolor=urlcolor,
	linkcolor=linkcolor,
	citecolor=citecolor,
	pdfauthor = {Simon Lapointe, Mykola Matviichuk and Brent Pym},
	pdftitle = {\papertitle}
}

\usepackage{aliascnt}

\newcommand{\thdef}[2]{
	\newaliascnt{#1}{theorem}  
	\newtheorem{#1}[#1]{#2}
	\aliascntresetthe{#1}  
	\newtheorem*{#1*}{#2}
	\expandafter\newcommand\expandafter{\csname #1autorefname\endcsname}{#2}
}

\newtheorem{theorem}{Theorem}[section]
\newtheorem*{theorem*}{Theorem}

\thdef{conjecture}{Conjecture}
\thdef{lemma}{Lemma}
\thdef{corollary}{Corollary}
\thdef{proposition}{Proposition}
\theoremstyle{definition}
\thdef{definition}{Definition}

\theoremstyle{remark}
\thdef{remarkx}{Remark}
\thdef{examplex}{Example}

\newenvironment{example}
  {\pushQED{\qed}\examplex}
  {\popQED\endexamplex}

\newenvironment{remark}
  {\pushQED{\qed}\remarkx}
  {\popQED\endremarkx}

\newcounter{whitneycount}

\newcommand{\whitney}{\stepcounter{whitneycount}Whitney umbrellas, part \arabic{whitneycount}}

\newcommand{\defn}[1]{\textbf{\emph{#1}}}

\newcommand{\abrac}[1]{\left\langle#1\right\rangle}
\newcommand{\rbrac}[1]{\left(#1\right)}
\newcommand{\sbrac}[1]{\left[#1\right]}
\newcommand{\set}[2]{\left\{#1 \,\middle|\, #2 \right\}}

\newcommand{\NN}{\mathbb{N}}
\newcommand{\CC}{\mathbb{C}}
\newcommand{\PP}{\mathbb{P}}
\newcommand{\KK}{\mathbb{K}}
\newcommand{\ZZ}{\mathbb{Z}}
\newcommand{\QQ}{\mathbb{Q}}
\newcommand{\xQQ}{\mathcal{Q}}
\newcommand{\bA}{\mathbb{A}}
\newcommand{\X}{\mathsf{X}}
\newcommand{\W}{\mathsf{W}}
\newcommand{\Ynn}{\Y_{\ps\mathrm{-nn}}}
\newcommand{\YDV}{\Y_{\ps\mathrm{-DV}}}

\newcommand{\van}[1]{\mathsf{V}(#1)}

\DeclareMathOperator{\supp}{supp}
\newcommand{\cntr}[1][\bullet]{\Z_{#1}}
\newcommand{\ZCan}[1][\bullet]{\Z^{\mathrm{as}}_{#1}}
\newcommand{\rcntr}[1][\bullet]{\underline{\Z}_{#1}}
\newcommand{\Zab}{\Z_{\mathrm{ab}}}
\newcommand{\ZHeis}{\Z_{\mathrm{Heis}}}
\newcommand{\wtng}[1][\bullet]{\cI_{#1}}
\newcommand{\rwtng}[1][\bullet]{\underline{\cI}_{#1}}
\newcommand{\ord}[1][]{\mathrm{ord}_{#1}}
\newcommand{\lt}[2][]{\mathrm{lt}_{#1}(#2)}
\newcommand{\inv}[1]{\mathrm{inv}(#1)}
\newcommand{\Inv}{\mathsf{INV}}

\newcommand{\Aut}[2][]{\mathsf{Aut}_{#1}\rbrac{#2}}
\newcommand{\Gm}{\mathbb{G}_{\mathrm{m}}}

\newcommand{\U}{\mathsf{U}}

\newcommand{\Bl}[2]{\mathsf{Bl}_{#2}(#1)}
\newcommand{\dnc}[2]{\mathsf{Deg}_{#2}(#1)}
\newcommand{\dnco}[2]{\mathsf{Deg}_{#2}(#1)^\circ}

\newcommand{\C}{\mathsf{C}}
\newcommand{\Y}{\mathsf{Y}}
\newcommand{\Z}{\mathsf{Z}}

\newcommand{\sing}[1]{{#1}_{\mathrm{sing}}}
\newcommand{\red}{\mathrm{red}}

\newcommand{\tb}[2][]{\mathsf{T}_{#1}#2}
\newcommand{\ctb}[2][]{\mathsf{T}^*_{#1}#2}
\newcommand{\nb}[2][]{\mathsf{N}_{#1}#2}

\DeclareMathOperator{\codim}{codim}
\DeclareMathOperator{\gr}{gr}
\newcommand{\grwtng}[1][]{\gr^{\wtng}_{#1}}

\newcommand{\eul}{E} 
\newcommand{\ueul}{\underline{E}} 
\newcommand{\ps}{\sigma}
\newcommand{\pslin}{\ps^{\mathrm{lin}}}

\newcommand{\cT}[1]{\mathcal{T}_{#1}} 
\newcommand{\coT}[1]{\mathcal{T}^{\vee}_{#1}} 
\newcommand{\coN}[1]{\mathcal{N}^\vee_{#1}} 

\newcommand{\hT}{\widehat{\mathcal{T}}}
\newcommand{\hder}[1][\bullet]{\widehat{\mathscr{X}}^{#1}}
\newcommand{\der}[2][\bullet]{\mathscr{X}^{#1}_{#2}} 

\newcommand{\hO}{\widehat{\mathcal{O}}}
\newcommand{\cO}[1]{\mathcal{O}_{#1}} 
\newcommand{\cI}{\mathcal{I}} 

\newcommand{\coH}[2][\bullet]{\mathsf{H}^{#1}\!\rbrac{#2}}

\newcommand{\E}{\mathsf{E}}

\newcommand{\dd}{\mathrm{d}}
 \newcommand{\hook}[1]{\iota_{#1}}
\newcommand{\cvf}[1]{\partial_{#1}}
\newcommand{\logcvf}[1]{#1\partial_{#1}}

\newcommand{\Spec}[2][]{\mathsf{Spec}_{#1}\left(#2\right)}

\newcommand{\fg}{\mathfrak{g}}
\newcommand{\ba}{\mathbf{a}}
\newcommand{\fh}{\mathfrak{h}}
\newcommand{\fm}{\mathfrak{m}}
\newcommand{\fu}{\mathfrak{u}}
\newcommand{\bkappa}{\boldsymbol{\kappa}}
\newcommand{\bw}{\mathbf{w}}
\newcommand{\ubw}{\underline{\bw}}
\newcommand{\uw}{\underline{w}}
\newcommand{\cF}{\mathcal{F}}
\newcommand{\cJ}{\mathcal{J}}

\begin{document}

\begin{abstract}
    We establish existence of functorial orbifold reductions of singularities for Poisson subvarieties in smooth Poisson threefolds.  Namely, we show that with enough weighted blowups, one can reduce the singularities of such Poisson subvarieties to certain simple, explicit, local normal forms: Du Val surface singularities where the Poisson structure is locally Jacobian, and plane curves lying in the vanishing locus of a particular linear Poisson structure.  The proof combines Abramovich--Temkin--W\l{}odarczyk and McQuillan's recent approach to resolution of singularities for varieties via weighted blowups with some new normal forms for three-dimensional Poisson brackets derived via Poisson cohomology.  Along the way, we describe necessary and sufficient conditions for a polyvector field to lift to the weighted blowup of an orbifold along a suborbifold, generalizing criteria of Polishchuk for unweighted blowups of Poisson structures on smooth varieties.
\end{abstract}

\author{Simon Lapointe}
\author{Mykola Matviichuk}
\author{Brent Pym}
\author{Boris Zupancic}
\title{\papertitle}
\maketitle

\setcounter{tocdepth}{1}
\tableofcontents

\section{Introduction}

\subsection{Context}
Numerous deep results in algebraic geometry ultimately rely on Hironaka's desingularization theorem~\cite{Hironaka1964}, which guarantees that the singularities of any algebraic variety over a field  of characteristic zero can be resolved by making sufficiently many blowups along smooth subvarieties.  Moreover, it is often useful to know that this can be done in an ``embedded'' way, e.g. that for a smooth variety $\X$ and a singular subvariety $\Y$ we can construct a smooth variety $\X'$, a smooth subvariety $\Y'\subset\X'$ and a surjective projective morphism of pairs $(\X',\Y') \to (\X,\Y)$ that is an isomorphism away from the singular locus of $\Y$.

However, in the presence of a symplectic/Poisson structure, the theorem breaks down. For instance, many symplectic singularities in the sense of Beauville~\cite{Beauville2000a} admit no Poisson/symplectic resolution; see e.g.~\cite{Fu2005,Verbitsky2000}.  The basic problem is that Poisson structures cannot be pulled back along arbitrary maps; in particular, they may develop poles along the exceptional divisor of a blowup.

A similar situation occurs for foliations, where it was observed in~\cite{McQuillan2013,Panazzolo2006} (and later \cite{Abramovich2025b}) that one can get closer to resolving the singularities if one uses  ``weighted'' blowups that assign different weights to different normal directions to the centre.  More recently, Abramovich--Temkin--W\l{}odarczyk~\cite{Abramovich2024} and McQuillan~\cite{McQuillan2020}  showed that resolution of singularities of varieties (absent further structure) can be done faster and more functorially using weighted blowups as well.  The trade-off is that one must work with orbifolds (Deligne--Mumford stacks) rather than varieties.  The reason is that weighted projective spaces, when viewed as varieties, have finite quotient singularities.  Thus, to prevent weighted blowups from introducing new singularities, we must instead view them as smooth orbifolds, which is good enough for many applications anyhow.

In this paper, we describe the conditions under which a Poisson structure can be lifted along such a weighted blowup of orbifolds, and use it to give an algorithm to reduce, as much as possible, the singularities of Poisson varieties, in embedding dimension up to three.  As we shall see, this approach yields much stronger results than would be possible using ordinary blowups alone.

\subsection{Results}
We work throughout with orbifolds over an algebraically closed field $\KK$ of characteristic zero, i.e.~smooth separated Deligne--Mumford stacks of finite type over $\KK$, or complex analytic orbifolds with $\KK=\CC$.  By a \defn{Poisson triple}, we mean an orbifold $\X$ equipped with a Poisson structure $\ps$ (i.e.~a Poisson bracket on $\cO{\X}$) and a Poisson subvariety $\Y\subset \X$ (i.e.~the closed substack defined by a coherent sheaf of Poisson ideals).  

Starting from an arbitrary Poisson triple $(\X,\Y,\ps)$ we try to construct a new triple $(\X',\Y',\ps') \to (\X,\Y,\ps)$ giving an embedded resolution (or at least improvement) of the singularities of $\Y$. 
When $\dim \X  < 2$, the Poisson structure is zero and $\Y$ is smooth, so there is nothing to do.  Meanwhile, when $\dim \X =2$, repeatedly applying ordinary blowups to the singular points of $\Y$ produces an embedded Poisson resolution $(\X',\Y',\ps') \to (\X,\Y,\ps)$; we show that the same is true if we apply the new weighted resolution algorithm of \cite{Abramovich2024}, i.e.~the latter is compatible with Poisson structures in embedding dimension two.

However, when $\dim \X =3$, we encounter two types of singularities, illustrated in \autoref{fig:noninflatable}, that can never be blown up without destroying the Poisson structure on the ambient space $\X$:
\begin{itemize}
    \item \defn{Non-nilpotent points}, where $\Y$ is a curve with planar singularities, contained in the vanishing locus of $\ps$, and the Lie algebra obtained by linearization of the Poisson bracket is not nilpotent. Near such a point, the triple $(\X,\Y,\ps)$ has a simple normal form: there exists formal coordinates $(x,y,z)$ such that
    \[
    \ps = x\cvf{x}\wedge\cvf{y} \qquad\textrm{and}\qquad \Y = \{x=g(y,z)=0\},
    \]
    for some $g(y,z)$.
    \item \defn{Du Val points}, where $\Y$ is a surface with a Du Val singularity, and $\ps$ has an isolated zero, so that $\ps$ induces a symplectic structure on the smooth locus of $\Y$ in a neighbourhood of $p$.  They also have a simple normal form:
    \[
    \ps = g(f_x\cvf{y}\wedge\cvf{z} + f_y\cvf{z}\wedge\cvf{x} + f_z\cvf{x}\wedge \cvf{y}) \qquad\textrm{and}\qquad \Y = \{f=0\},
    \]
    where $f$ is a standard equation for a Du Val singularity (recalled in \autoref{tab:ADE} in \autoref{sec:ade}) and $g$ is a function with constant term $g(p) =1$.  Abstractly, these are the three-dimensional Poisson structures given by restricting the versal Poisson deformation of a symplectic surface singularity to a smooth curve in the base of the deformation.
\end{itemize}
Note that these conditions involve both the subvariety $\Y$ and the Poisson structure.  They can be thought of as a sort of nondegeneracy condition on $\ps$ along the singular locus of $\Y$.

\begin{figure}
\begin{subfigure}{0.45\textwidth}
    \centering
\includegraphics[scale=0.4]{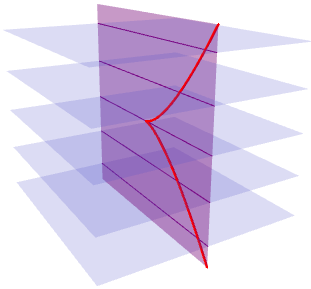}
\caption{A non-nilpotent point, where $\Y$ is a singular curve (red) lying in the vanishing locus of the Poisson structure, which is a vertical plane (purple). 
The horizontal planes (blue), minus their intersection with the vertical plane, are symplectic leaves.}
\end{subfigure}\hspace{0.4em}
\begin{subfigure}{0.45\textwidth}
    \centering
\includegraphics[scale=0.38]{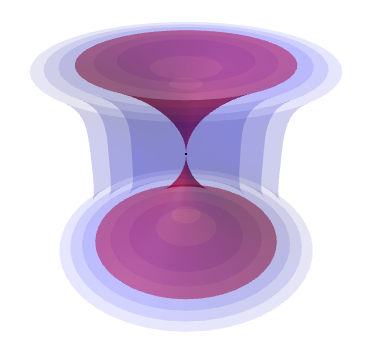}
\caption{A Du Val point, where $\Y$ is a surface (red) with a Du Val singularity, whose smooth locus is a symplectic leaf.  The remaining symplectic leaves (blue) are the other level sets of a defining equation of the surface.}
\end{subfigure}
\caption{Singular Poisson subvarieties $\Y\subset \mathbb{A}^3$ that cannot be resolved by weighted blowups.}
\label{fig:noninflatable}
\end{figure}

Our main result shows that in all other cases, the singularities of $\Y$ can be improved by a judicious choice of weighted blowups:
\begin{theorem}[see \autoref{thm:3-1-resolve} and \autoref{thm:3-2-resolve}]\label{thm:main-result}
Let $\X$ be an orbifold of dimension three equipped with a Poisson structure $\ps$,  and let $\Y \subset \X$ be a Poisson subvariety of pure dimension.  Then there exists a sequence of weighted blowups of Poisson triples 
\[
(\X',\Y',\ps') \to \cdots \to (\X,\Y,\ps)
\]
such that the only singularities of $\Y'$ are non-nilpotent points (when $\dim \Y=1$) or Du Val points (when $\dim \Y=2$).
\end{theorem}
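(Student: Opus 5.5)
The plan is to run the Abramovich--Temkin--W\l{}odarczyk weighted resolution algorithm on $\Y\subset\X$, and to stop it exactly at the points where it would destroy the Poisson structure. This needs three ingredients. First, the algorithm is functorial, so its centres are canonical. Second, canonical centres are preserved by the Hamiltonian flows of $\ps$. Third, there is a lifting criterion for polyvector fields along weighted blowups, generalizing Polishchuk's criterion for ordinary blowups.

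The first step is a criterion. For a weighted blowup of $\X$ along a centre $\Z$ with weighting $\wtng$, the bivector $\ps$ lifts to a regular Poisson structure on the blowup if and only if two conditions hold: $\ps$ preserves the weighting ideals, i.e.\ $\{\wtng[a],\wtng[b]\}\subset\wtng[a+b-?]$ with the appropriate shift; and the induced graded bivector on the weighted normal cone has nonnegative weight. I would prove this in local orbifold charts $x_i=u^{w_i}x_i'$, where the condition becomes a statement about the weights of the coefficients of $\ps$. The weighted centres are canonical, being built from the maximal-order invariant $\inv{\Y}$ and its coefficient ideals. They are therefore preserved by all derivations preserving the ideal of $\Y$, in particular by the Hamiltonian vector fields. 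This gives the first condition automatically. What remains is the degree condition on the leading term $\lt{\ps}$ of $\ps$ with respect to the weighting.

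The second step is a case analysis at the points where the invariant is maximal; this is where I expect the main obstacle. When $\dim\X=3$, the centres are points, curves, or surfaces. At a curve centre, or where $\ps$ is generically nondegenerate transversally, the lifting condition should follow directly from Poisson-ideal invariance. At a point centre with weights $(w_1,w_2,w_3)$, I would write $\ps$ in formal coordinates adapted to the centre. For this I would use normal forms derived from Poisson cohomology: linearize at the point and classify the Lie algebra $\fg$ in dimension three.
\begin{itemize}
\item If $\fg$ is nilpotent (abelian or Heisenberg), the quadratic or higher terms of $\ps$ have enough weight. The lifting inequality then follows from the inequality $w_i+w_j\le \sum_k(\cdots)$ enforced by $\ps$ preserving $\Y$.
\item If $\fg$ is non-nilpotent and $\Y$ is a curve, the normal form $x\cvf{x}\wedge\cvf{y}$ forces $\Y\subset\{x=0\}$, which is an exceptional, non-inflatable point.
\item If $\ps$ has an isolated zero and $\Y$ is a surface, I would show that Poisson-ness forces $\ps=g\,\hook{\dd f}(\cvf{x}\wedge\cvf{y}\wedge\cvf{z})$ up to a unit. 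The lifting inequality reduces to $\sum w_i>\deg_{\bw}f$ for the weighted-homogeneous leading term. That is exactly the failure of the log-canonical/Du Val condition, so it holds except at Du Val points.
\end{itemize}

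The third step is termination. I would modify the ATW algorithm so that it performs its blowup only away from the locus of non-nilpotent, respectively Du Val, points. These loci are closed, isolated in the relevant stratum, and canonical. After each blowup, $\inv{\Y}$ strictly decreases at points of the new exceptional locus, by ATW. The lifted $\ps'$ keeps $\Y'$ Poisson, since the strict transform of a Poisson subvariety is Poisson. The invariant set is well ordered, so the process terminates. At that stage every remaining singular point is one where no admissible blowup exists, and by the second step it is of the stated type. The dimension-one case is handled separately from the dimension-two case (\autoref{thm:3-1-resolve}, \autoref{thm:3-2-resolve}), because the relevant normal forms differ. The delicate part is verifying the weight inequality in the nilpotent and degenerate surface cases.
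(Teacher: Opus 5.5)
\textbf{There is a genuine gap, in the first bullet of your second step.} A nilpotent linearization does \emph{not} make the associated centre $\ZCan(\X,\Y)$ liftable, in either dimension.

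For surfaces, take $\ps=[\cvf{x}\wedge\cvf{y}\wedge\cvf{z},W]$ with $W=x^2-y^2z$ and $\Y=\van{W}$. The linearization $2x\,\cvf{y}\wedge\cvf{z}$ is Heisenberg. But the associated centre at the pinch point is $(x^2,y^3,z^3)$, with $\kappa_3=\tfrac76$, so $\ord(\ps)=1-\kappa_3=-\tfrac16$. Since codegeneracy and conilpotence agree for Jacobian structures, $\ps$ does not lift at all (\autoref{ex:whitney-conilp}).

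For curves, take $\ps=xz\,\cvf{x}\wedge\cvf{y}$ and $\Y=\van{x,y^2-z^3}$. Here $\Y$ is Poisson and the linearization is abelian. Yet for the associated centre $(x^1,y^2,z^3)$ one finds $\ord(E\wedge\ps)=\tfrac53-\tfrac{11}{6}<0$, so again there is no lift. The same failure occurs at Heisenberg points where $\van{\ps}$ is a smooth surface.

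None of these points is non-nilpotent or Du Val. So your procedure of running the ATW algorithm away from the bad locus stalls there. The conclusion of your termination step, that a point admitting no liftable ATW blowup must be of the stated type, is false, for instance at Whitney pinch points.

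Two smaller issues. First, invariance of the canonical centre under Hamiltonian flows only gives the Poisson condition $\{x_i,x_j\}\in\cI_{\max(w_i,w_j)}$, not the shifted condition $\{x_i,x_j\}\in\cI_{w_i+w_j-\gcd\bw}$ that lifting requires. Second, in your third bullet the inequality is reversed. Lifting a Jacobian structure needs $\kappa_3\le 1$, whereas Du Val (canonical, not merely log canonical) forces $\kappa_3>1$ for every admissible centre.

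\textbf{The missing idea} is that you must sometimes abandon the associated centre for a different conilpotent centre whose blowup still lowers the invariant. You must also prove that this replacement is needed only in controlled situations.

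For curves with $a_1=1$, the paper uses the ordinary point blowup at abelian points. At Heisenberg points where $\van{\ps}$ is a surface, it uses the $b$-completion $((x+A(f))^1,y^b,z^b)$, with $b=a_2$, of that surface. The invariant then drops not by ATW but by Abhyankar's theorem applied to the ordinary blowup of the plane curve inside the surface (\autoref{rmk:abhyankar}). When $a_1>1$, or when $\van{\ps}$ is a curve, the paper shows directly that the associated centre is conilpotent.

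For surfaces, you need a leading-term analysis:
\begin{itemize}
\item Invariance of the centre kills the negative-order vector field $f^{-1}\hook{\dd f}\ps$.
\item Koszul exactness for the reduced equation $f_0$ then makes $\lt{\ps}$ the Jacobian structure of $f_0$ with $\kappa_3>1$. So non-conilpotence forces a Du Val or Whitney-umbrella leading term.
\item The Poisson-cohomology normal form of \autoref{prop:whitney-normal-form}, together with Seidenberg's theorem, shows that the non-isolated case is an honest Whitney umbrella.
\end{itemize}
Finally, when the invariant is $(2,3,3)$, the paper does not use the non-liftable pinch-point centre. It blows up the whole one-dimensional singular curve $\C$ with the unweighted centre instead. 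This centre is conilpotent because $\ps$ is logarithmic along the normal-crossing branches, and the blowup removes all singularities over $\C$ at once.
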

We remark that the use of weighted blowups here is crucial; with only ordinary blowups, the situation is much less under control.  For instance, the Jacobian Poisson structure (\autoref{ex:jacobian}) of any function with an isolated singularity of multiplicity two can never be blown up with an ordinary blowup.  Thus ordinary blowups will fail to improve many non-Du Val singularities.

The basic strategy of our proof of \autoref{thm:main-result} is to try, as much as possible, to apply the weighted blowup algorithm of \cite{Abramovich2024}. We find that for ``most'' singularities, the  blowup prescribed by the algorithm is automatically compatible with the Poisson structure, but for some singularities, it is not.  We then study the latter cases in detail, and show that unless the singularities are non-nilpotent or Du Val as above, we can find an alternative blowup that still improves the singularities.  In order to carry out this analysis, we establish several additional intermediate results which may be of independent interest, most notably:
\begin{itemize}
    \item \autoref{prop:blowdown-polyvector} gives necessary and sufficient conditions to lift a polyvector field along a weighted blowup of an orbifold.  Specialized to the case of bivectors, it gives conditions for the weighted blowup of Poisson structures, generalizing the results of Polishchuk~\cite[\S8]{Polishchuk1997}, who treated the unweighted case, albeit with a somewhat different exposition.
    \item \autoref{prop:small-invariant-ADE}, which gives a characterization of Du Val singularities, two-component normal crossings, and Whitney umbrellas as the singular surfaces in $\bA^3$ whose singularities are ``small'' in a certain precise sense.  This is closely related to the characterization of Du Val singularities as the canonical singularities of dimension two (see e.g.~\cite{Reid1980}).
    \item \autoref{prop:3d-poisson-germs} and \autoref{prop:whitney-normal-form}, which give normal forms for certain three-dimensional Poisson structures, using Poisson deformation theory via the Poisson cohomology differential graded Lie algebra.  Here we make essential use of the calculations of certain key Poisson cohomology groups due to Hoekstra--Zeiser~\cite{Hoekstra2023} and Pichereau~\cite{Pichereau2009}.
\end{itemize}
The paper thus involves a mixture of techniques from both resolution of singularities and Poisson geometry, and we include some review of relevant techniques at various points in the paper, most notably in \autoref{sec:blowups} and \autoref{sec:invariants}.

\subsection{Implications and related questions} 
Let us comment briefly on the scope of the result and its connection with related problems.

\subsubsection{Higher dimension} While our general results on blowups of polyvectors work in any dimension, many of the methods used to prove \autoref{thm:main-result} are quite specific to dimension three.  For instance, we use explicit normal forms for the Poisson bracket.  We also rely on the fact that non-nilpotent and Du Val points are isolated, so that they cannot interfere with other singularities.  We hope to make progress on the higher-dimensional case in future work, in the spirit of the recent work \cite{Abramovich2025b} on foliations.
\subsubsection{Non-embedded resolutions}
Since curves and symplectic surface singularities admit Poisson resolutions, \autoref{thm:main-result} implies that the singularities of $\Y$ itself can be fully resolved by a Poisson morphism $\Y' \to \Y$. The issue is that $\Y'$ may not be embeddable in a threefold $\X'$ given by blowing up $\X$, so this does not produce an embedded resolution of the pair $(\X,\Y)$.

\subsubsection{Logarithmic resolutions} For many purposes one wishes to have ``logarithmic'' resolutions for which the exceptional divisor is simple normal crossings.  This is not guaranteed by the methods of \cite{Abramovich2024} on which our \autoref{thm:main-result} is based, but more recent works have modified the algorithm to produce logarithmic resolutions~\cite{Quek2022a,Wlodarczyk2023}.  We plan to adapt these methods to the Poisson setting in future work.

\subsubsection{Alterations} For many purposes in algebraic geometry, such as the study of cohomology, one can get by with a weaker notion of resolution, called an alteration~\cite{deJong1996}, which allows branched covers, not just birational maps.  Intriguingly, Du Val points of Poisson triples admit a well known Poisson alteration in a neighbourhood of the singular point, given by a slice of the Grothendieck--Springer alteration for semisimple Lie algebras.  It would be interesting to construct global counterparts of this alteration.

\subsubsection{Semiclassical Hodge theory} Our work is motivated, in part, by the study of Hodge-theoretic invariants of smooth Poisson varieties and their quantizations by Lindberg and the third author~\cite{Lindberg2024}, following ideas of Kontsevich~\cite{Kontsevich2008a} and Katzarkov--Kontsevich--Pantev~\cite{Katzarkov2008}.  In order to treat singular varieties, or to establish functoriality in the smooth case, one needs some weak form of resolution of singularities for embedded Poisson varieties.  While the results of this paper do not completely resolve the singularities, they should still enable the relevant Hodge-theoretic constructions in dimension three, e.g.~since the logarithmic de Rham complex of a Du Val singularity is well understood.

\subsection{Acknowledgements}Special thanks are due to Maxim Brais, whose numerous insights have informed the third author's understanding of resolution of singularities.  We also thank Dan Abramovich, Ana B\u{a}libanu, Andr\'e Belotto da Silva, Rui Loja Fernandes, Eckhard Meinrenken, Travis Schedler, Michael Temkin and Florian Zeiser for helpful conversations on related topics.   Several of these conversations were made possible through workshops hosted by CIRM and BIRS-IASM-Hangzhou; we thank the institutes and the workshop organizers for the stimulating events. 

S.L.~was supported by an Undergraduate Student Research Award from the Natural Sciences and Engineering Research Council of Canada
(NSERC).  M.M.~was supported by a startup fund at The Chinese University of Hong Kong. B.P.~was supported by a faculty startup grant at McGill University, a New university researchers startup grant from the Fonds de recherche du Qu\'ebec -- Nature et technologies (FRQNT), and by NSERC, through Discovery Grant RGPIN-2020-05191.  B.Z.~was supported by a Tomlinson Fellowship at McGill University.

\section{Weighted blowups of orbifolds}
\label{sec:blowups}
In this section, we review the theory of weighted blowups of orbifolds based on Rees algebras of filtrations, as developed in \cite{Loizides2023,McQuillan2013,McQuillan2020,Quek2022a,Wlodarczyk2022}; see also \cite{Brais2025}.  Since the  terminology, conventions, and notation vary across these references, and each has aspects that are relevant for us, we have adopted a mixture that we present here in a self-contained manner.

\subsection{Orbifolds}
We work throughout over an algebraically closed field $\KK$ of characteristic zero.  By an \defn{orbifold} we mean a smooth separated Deligne--Mumford stack $\X$ of finite type over $\KK$ or a complex analytic orbifold when $\KK = \CC$.  Thus $\X$ is \'etale locally isomorphic to the stack quotient of a smooth variety by a finite group.

We denote by $\cO{\X}$ the structure sheaf of $\X$ and by $\cT{\X}$ the sheaf of vector fields.  A (closed) \defn{orbifold subscheme} is a closed substack $\Y = \van{\cJ}$ defined by the vanishing of a coherent sheaf of ideals $\cJ < \cO{\X}$.  An orbifold subscheme is an \defn{orbifold subvariety} if it is reduced, i.e.~$\cO{\Y}=\cO{\X}/\cJ$ has no nilpotent elements.

 By an \defn{orbifold chart} on $\X$, we mean an \'etale open $\phi : \U \to \X$, together with functions $x_1,\ldots,x_n \in \cO{}(\U)$ defining an \'etale map $\U \to \bA^n$. We will often simply say that $(x_1,\ldots,x_n)$ are orbifold coordinates on $\X$, the et\'ale open being left implicit.

\subsection{Numerical conventions}

A \defn{weight sequence} (of length $k$) is a weakly decreasing sequence
\[
\bw = (w_1 \ge w_2 \geq w_3 \geq \cdots)
\]
of non-negative rational numbers, such that $w_j = 0$ if and only if $j > k$.  We shall abuse notation and identify a weight  sequence of length $k$ with the corresponding tuple $(w_1,\ldots,w_k)$, which contains the same information.

An \defn{exponent sequence} is a sequence
\[
\ba = (a_1 \leq a_2 \leq a_3 \leq \cdots)
\]
obtained by taking the termwise reciprocal of a weight sequence $\bw$, so that $a_i = \frac{1}{w_i} \in \QQ_{>0} \sqcup \{\infty\}$; its length is the length of the corresponding weight sequence, or equivalently the number of entries of $\ba$ that are finite.  We identify an exponent sequence of length $k$ with the tuple $(a_1,\ldots,a_k)$.

A \defn{weight sum sequence} is a sequence 
\[
\bkappa = (\kappa_0 \leq \kappa_1 \leq \kappa_2 \leq \cdots)
\]
obtained by taking the partial sums of a weight sequence: $\kappa_j = w_1+\cdots + w_j$.  Note that $\kappa_0=0$ is the empty sum.

All of these sequences can be viewed as taking values in the \defn{extended non-negative rational numbers}
\[
\xQQ := \QQ_{\geq 0} \sqcup \{\infty\}.
\]

\begin{example}\label{ex:23-wts}
    The weight sequence $\bw = (3,2) = (3 \ge 2 \ge 0 = 0 = \cdots )$ has corresponding exponent sequence $(\tfrac{1}{3},\tfrac{1}{2}) = (\tfrac{1}{3} \leq \tfrac{1}{2} \leq \infty = \infty = \cdots)$ and corresponding weight sum sequence $\bkappa = (0,3,5=5=\cdots)$.
\end{example}

\subsection{Weighted orders of monomials}
Suppose that $(x_1,\ldots,x_n)$ are coordinates on an orbifold $\X$.  If $J = (j_1,\ldots,j_n)$ is a multi-index, we denote by
\[
x^J := x_1^{j_1}\cdots x_n^{j_n}
\]
the corresponding monomial.  Given a weight sequence $\bw$, we can think of the quantity
\[
\ord[\bw](x^J) := w_1 j_1 +\cdots + w_n j_n
\]
as the ``$\bw$-weighted order of vanishing of the monomial $x^J$'' at the origin.  If $\ba = \frac{1}{\bw}$ is the corresponding exponent sequence, then the tuple $(x_1^{a_1},x_2^{a_2},\ldots,x_n^{a_n})$ lists the coordinate powers that have weighted order one (with the convention that $0\cdot\infty=\frac{\infty}{\infty} = 1$).

\begin{example}\label{ex:23-centre}
Consider the weight sequence $\bw = (3,2)$ and corresponding exponent sequence $(\tfrac{1}{3},\tfrac{1}{2})$ from \autoref{ex:23-wts}. In coordinates $(x,y,z)$ the weighted order of the monomial $x^iy^jz^k$ is equal to $3i+2j$, and the coordinate powers of order one are given by $(x^{1/3},y^{1/2},z^{\infty})$.
\end{example}

\subsection{Weighted centres} The concept of weighted order of a monomial depends on the choice of coordinates and therefore does not globalize well.  The filtration on functions defined by weighted order of vanishing is better behaved, so that the correct global notion is the following.

\begin{definition}\label{def:centre}
Let $\X$ be an orbifold.  A \defn{centre} on $\X$ is a collection $\cntr = (\cntr[\lambda])_{\lambda \in \QQ_{\ge 0}}$ of orbifold subschemes, with defining ideals $\wtng = (\wtng[\lambda])_{\lambda \in \QQ_{\ge0}}$, such that for every closed point $p \in \X$, there exist coordinates $(x_1,\ldots,x_n)$ centred at $p$ and a weight sequence $\bw$ such that
\[
\wtng[\lambda,p] = \rbrac{x^J \, \middle|\, j_1w_1+\cdots+j_nw_n \ge \lambda} < \cO{\X,p}
\]
is the ideal generated by monomials of $\bw$-weighted order at least $\lambda$.  We say that $\cntr$ is defined locally by the \defn{weighted coordinates} $(x^\ba) = (x_1^{a_1},x_2^{a_2},\ldots)$ where $\ba = \frac{1}{\bw}$ is the corresponding exponent sequence.
\end{definition}

Given a centre $\cntr$ with corresponding ideals $\wtng$, we denote by
\[
\wtng[>\lambda] = \bigcup_{\mu > \lambda}\wtng[\mu]
\]
the ideal of elements of order strictly greater than $\lambda$.
\begin{definition}\label{def:supp-centre}
   The \defn{support} of a centre $\cntr$ is the locus where all functions of positive order vanish:
   \[
   \supp \cntr :=  \van{\cI_{>0}} = \bigcap_{\lambda > 0}\cntr[\lambda] \subset \X
   \]
\end{definition}

Thus, in local weighted coordinates $(x_1^{a_1},\ldots,x_n^{a_n})$ the support $\supp \cntr$ is given by the vanishing of all coordinates whose weight $w_i = \frac{1}{a_i}$ is positive, or equivalently whose exponent $a_i$ is finite. Hence it is a smooth closed subvariety whose codimension at a point $p$ is the length of the corresponding weight sequence.

\begin{example}\label{ex:23-support}
    For the centre $\cntr$ defined by the weighted coordinates $(x^2,y^3,z^\infty)$ as in \autoref{ex:23-centre}, the support $\supp \cntr = \van{x,y}$ is the $z$-axis. 
\end{example}

\begin{remark}
One can show that the weight sequence $\bw$ (or equivalently the exponent sequence $\ba$) at a point $p \in \supp \cntr$ in \autoref{def:centre} is independent of the choice of coordinates, and is constant on connected components of $\supp{\cntr}$; see, e.g.~\cite[Remark 1.1.5]{Brais2025}.
\end{remark}

\subsection{Reduced centres}
Note that if $\cntr$ is a weighted centre and $\lambda \in \QQ_{>0}$ is a nonzero rational, we can rescale the indices of the filtration by a factor of $\lambda$ to obtain a new centre $\cntr[\lambda\bullet]$.  This freedom is convenient, but it will sometimes also be useful to eliminate it, which we can do by imposing the following condition.
\begin{definition}\label{def:red-wt}
A weight sequence is \defn{reduced} if its nonzero entries are coprime integers. A centre $\cntr$ is \defn{reduced} if the weight sequence at any point is reduced.
\end{definition}
\begin{remark}
If $\cntr$ is reduced, the corresponding filtration $\wtng$ is completely determined by its values on the integers, i.e. by the ideals $\wtng[1]>\wtng[2]>\cdots$.
\end{remark}

For every nonzero weight sequence $\bw$, there is a unique reduced weight sequence $\ubw$ and rational number $\lambda$ such that
\[
\bw = \lambda \ubw.
\]
We call $\lambda$ the \defn{greatest common divisor} of $\bw$ and denote it by $\gcd{\bw}$.

Correspondingly, every centre $\cntr$ has an underlying reduced centre $\rcntr$, obtained by rescaling the filtration locally by the greatest common divisor of the weight sequence.  Thus, if $\cntr$ is connected with weight sequence $\bw$, we have $\rcntr = \cntr[\gcd(\bw)\bullet]$; in general $\cntr$ is the disjoint union of the reductions of its connected components.

\begin{example}
    The centre $\cntr$ with weighted coordinates $(x^2,y^3,z^\infty)$ from \autoref{ex:23-centre} and \autoref{ex:23-support} is not reduced: the entries of the weight sequence $\bw = (\frac{1}{2},\tfrac{1}{3},0)$ are not integers, let alone coprime.  However, we have $\bw = \tfrac{1}{6} \cdot (3,2,0)$ and the weight sequence $(3,2,0)$ is reduced.  Hence $\gcd{\bw} = \tfrac{1}{6}$ and $\ubw = (3,2,0)$, so that the underlying reduced centre $\rcntr$ is defined by the weighted coordinates $(x^{1/3},y^{1/2},z^\infty)$
\end{example}

\subsection{Unweighted centres}
A centre $\cntr$ is called \defn{unweighted} if all its weights are zero or one; equivalently, it is defined locally be weighted charts of the form $(x_1,x_2,\ldots,x_j,x_{j+1}^{\infty},\ldots,x_n^{\infty})$.   In this case, $\cntr$ is reduced, and we have
\[
\wtng[j] = \wtng[>0]^j
\]
for all $j \in \NN$, so that $\cntr$ is completely determined by its support $\supp \cntr$. 

Conversely, if $\Z < \X$ is a smooth closed subvariety defined by an ideal $\cJ < \cO{\X}$, then setting $\wtng[j] := \cJ^j$ for $j \ge 0$ gives a unique unweighted centre whose support is $\Z$.  We call this the \defn{unweighted centre defined by $\Z$}.

\subsection{Cutting centres down to points}\label{sec:b-completion}
Let $\cntr$ be a centre of codimension $k$, let $p \in \supp \cntr$ be a point in the support, and let $\ba = (a_1,\ldots,a_k)$ be the exponent sequence of $\cntr$ at $p$.  Let $b \in \QQ_{>0}$ be a rational number such that $b \ge a_k$.  Then, as explained in \cite[\S3.1]{Brais2025}, one can form a centre supported at $p$ by using the exponent sequence $(a_1,\ldots,a_k)$ in the normal directions to $\supp \cntr$, and assigning exponent $b$ to a system of coordinates on $\supp \cntr$.  More precisely, the \defn{$b$-completion of $\cntr$ at $p$} from \cite[Definition 3.1.10]{Brais2025} is the centre $\cntr{}[p,b]$ defined by the filtration
\[
(\wtng[][p,b])_{\lambda} := \sum_{\mu + \nu/b \ge \lambda}\wtng[\mu]\fm_p^{\nu}.
\]
Concretely, if $(x_1^{a_1},\ldots,x_k^{a_k},x_{k+1}^\infty,\ldots,x_n^\infty)$ is a system of weighted coordinates for $\cntr$ centred at $p$, then $(x_1^{a_1},\ldots,x_k^{a_k},x_{k+1}^b,\ldots,x_n^b)$ is a system of weighted coordinates for $\cntr[][p,b]$.

\subsection{Valuations}
If $\cntr$ is a centre, $p \in \supp \cntr$, and $f \in \cO{\X,p}$ is a germ of a function at $p$, then the weighted order of vanishing of $f$ is defined by
\[
\ord[\cntr](f) = \sup\set{ \lambda \in \QQ_{>0}}{f \in \wtng[\lambda,p]}.
\]
It is either a non-negative rational number (if $f \neq0$) or $\infty$ (if $f=0$).  Hence it can be encoded in a valuation
\[
\ord[\cntr] : \cO{\X,p} \to \xQQ
\]
on the local ring $\cO{\X,p}$, taking values in the extended non-negative rational numbers $\xQQ = \QQ_{\ge 0}\sqcup\{\infty\}$. 

Concretely, in local weighted coordinates $(x_1^{a_1},x_2^{a_2},\ldots,x_n^{a_n})$ centred at $p$ with weights $\bw = 1/\ba$, every $f \in \cO{\X,p}$ has a Taylor expansion  $f = \sum c_{j_1\cdots j_n} x_1^{n_1}\cdots x_k^{n_k}$, and we have
\[
\ord[\cntr](f) = \inf\set{ w_1j_1+\cdots+w_nj_n }{ c_{j_1\cdots j_n} \neq 0}.
\]
Note that the possible weights of monomials form a discrete set, since they are integer multiples of $\gcd{\bw}$. Hence if $f$ is nonzero, the infimum is always achieved by some monomial, i.e.~it is a minimum.

\begin{example}
    If $\cntr$ is defined by the weighted chart $(x^2,y^3,z^\infty)$ of \autoref{ex:23-centre}, then 
    \[
    \ord[\cntr](x^5) = 5\cdot \tfrac{1}{2} = \tfrac{5}{2} \qquad \textrm{and} \qquad \ord[\cntr](x^2y^4z^5) = 2\cdot \tfrac{1}{2}+4\cdot \tfrac{1}{3} + 5 \cdot\tfrac{1}{\infty} = \tfrac{7}{3}
    \]
    and hence
    \[
    \ord[\cntr](x^5+x^2y^4z^5) = \inf\left\{\tfrac{5}{2},\tfrac{7}{3}\right\} = \tfrac{7}{3}
    \]
    with the minimum being achieved by the monomial $x^2y^4z^5$. 
    
    In contrast, for the unweighted centre $\cntr'$ with the same support, defined by the coordinates $(x^1,y^1,z^\infty)$, we have $\ord[\cntr'](x^5)  =5$ and $\ord[\cntr'](x^2y^4z^5) = 6$, so the minimum is instead achieved by $x^5$.
\end{example}

The weighted order of vanishing of a function is constant on connected components of the centre; hence it defines a morphism of sheaves
\[
\ord[\cntr] : \cO{\X} \to \xQQ_{\cntr}
\]
where $\xQQ_{\cntr}$ denote the constant sheaf on $\supp \cntr$, viewed as a sheaf on $\X$ via pushforward along the inclusion.

\subsection{Orders of tensors}

Given a centre $\cntr$ and a point $p \in \cntr$, we may define a notion of order for germs of tensors at $p$, as follows.  Concretely, in local weighted coordinates $(x_1^{a_1},\ldots,x_k^{a_k})$ we set
\[
\ord[\cntr](\dd x_i) = w_i = \tfrac{1}{a_i}
\]
and
\[
\ord[\cntr](\cvf{x_i}) = -w_i = - \tfrac{1}{a_i}
\]
and extend $\ord[\cntr]$ to a valuation on the tensor algebra.

More intrinsically, if $p \in \Z$, and $f,g \in \cO{\X,p}$ are functions such that $f(p)=0$, we set
\[
\ord[\cntr](g\dd f)= \ord[\cntr](g)+\ord[\cntr](f)
\]
(The normalization $f(p)=0$ is needed to make this well defined, because $\dd$ annihilates the constant functions.)  The order of an arbitrary tensor $\xi \in (\coT{\X})^{\otimes m} \otimes \cT{\X}^{\otimes n}$ is then uniquely determined by the requirement that $\ord[\cntr]$ be additive with respect to the multiplication in the tensor algebra, and the pairing of $\coT{\X}$ with $\cT{\X}$.

More generally, if 
$\cF \subset \rbrac{\prod_{m,n} (\coT{\X})^{\otimes m} \otimes \cT{\X}^{\otimes n}}_p$
is a set of germs of tensors, we denote by
\[
\ord[\cntr](\cF) := \inf_{\xi \in \cF} \ord[\cntr]{\xi}
\]
the infimum of the orders of all elements of $\cF$.

\subsection{Orders of polyvectors} We will be primarily interested in the \defn{polyvectors}
\[
\der[j]{\X} := \wedge^j \cT{\X}
\]
In a weighted chart $(x_1^{a_1},\ldots,x_n^{a_n})$, any polyvector can be written uniquely in the form
\[
\xi = \sum_I f_I \cvf{x_{i_1}} \wedge \cdots \wedge \cvf{x_{i_j}}
\]
where the sum is over multi-indices $I = (i_1 < \cdots < i_j)$ and $f_I \in \cO{\X}$.  Hence $\xi$ has order
\[
\ord[\cntr](\xi) = \min_I \{ \ord{f_I} - w_{i_1} - \cdots - w_{i_k} \}
\]
Since the weight sequence $\bw = (w_1,w_2,\ldots)$ is decreasing, the minimal possible order of polyvector of degree $j$ is achieved by the monomial $\cvf{x_1}\wedge\cdots \wedge\cvf{x_j}$, which has order $-w_1-\cdots - w_j =- \kappa_j$, the $j$th weight sum.  We therefore have the following:

\begin{lemma}\label{lem:polyvect-order}
    For a centre $\cntr$ with weight sequence $\bw$, and corresponding weight sum sequence $\bkappa$,  the minimal order of a polyvector field of degree $j \ge0 $ is given by
    \[
    \ord[\cntr]{\der[j]{\X}} = -\kappa_j.
    \]
\end{lemma}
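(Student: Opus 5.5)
We work in local weighted coordinates $(x_1^{a_1},\ldots,x_n^{a_n})$ centred at a point $p\in\supp\cntr$, with weights $\bw=1/\ba$ ordered so that $w_1\ge w_2\ge\cdots\ge w_n\ge 0$. The claim $\ord[\cntr]{\der[j]{\X}}=-\kappa_j$ is a statement about an infimum, so we prove two things: a lower bound $\ord[\cntr](\xi)\ge-\kappa_j$ for every germ $\xi$ of a $j$-vector at $p$, and an explicit $j$-vector attaining $-\kappa_j$.

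For the lower bound, write $\xi=\sum_I f_I\,\cvf{x_{i_1}}\wedge\cdots\wedge\cvf{x_{i_j}}$ as in the displayed formula for polyvector orders. That formula gives
\[
\ord[\cntr](\xi)=\min_I\{\ord[\cntr](f_I)-w_{i_1}-\cdots-w_{i_j}\}.
\]
Since $f_I$ is a function, $\ord[\cntr](f_I)\ge 0$. So it suffices to show $w_{i_1}+\cdots+w_{i_j}\le w_1+\cdots+w_j=\kappa_j$ for every strictly increasing multi-index $i_1<\cdots<i_j$. This holds termwise: $i_m\ge m$ and $\bw$ is weakly decreasing, so $w_{i_m}\le w_m$ for each $m$. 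Summing gives the bound.

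If $j>n$, then $\der[j]{\X}=0$ and every element has order $\infty$. Since the weight sequence vanishes beyond the codimension, the statement should be read with $j\le n$ (or with the obvious convention for $j>n$). In the case $j\le n$, the monomial $\cvf{x_1}\wedge\cdots\wedge\cvf{x_j}$ has coefficient $1$, whose order is $0$, so its order is exactly $-\kappa_j$ and the infimum is attained. The case $j=0$ is the statement that functions have minimal order $0=-\kappa_0$, achieved by the constant $1$.

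The only point needing care is that the coordinate formula for $\ord[\cntr](\xi)$ is legitimate, i.e.\ that the intrinsic definition of tensor order agrees with the chart formula. We take this from the preceding discussion, which fixes $\ord[\cntr](\cvf{x_i})=-w_i$ and extends additively to the tensor algebra. Granting that, the argument is the elementary rearrangement inequality $w_{i_m}\le w_m$ above, so no step is a real obstacle.
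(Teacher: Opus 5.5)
Your proof is correct and follows the paper's own argument. The paper reads the order off the chart formula $\min_I\{\ord f_I - w_{i_1}-\cdots-w_{i_j}\}$, uses that $\bw$ is decreasing to get the lower bound $-\kappa_j$, and notes that $\cvf{x_1}\wedge\cdots\wedge\cvf{x_j}$ attains it. Your termwise inequality $w_{i_m}\le w_m$ just makes that step explicit, and your caveat that the statement tacitly needs $j\le\dim\X$ (since $\der[j]{\X}=0$ otherwise) is a fair one.
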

The Schouten bracket
\[
[-,-] : \der[j]{\X} \times \der[k]{\X} \to \der[j+k-1]{\X}.
\]
has order zero, in the sense that
\[
\ord[\cntr][\xi,\eta] \ge \ord[\cntr]\xi + \ord[\cntr]\eta
\]
for all centres $\cntr$ and all polyvectors $\xi,\eta \in \der{\X}$, with equality if $\xi$ and $\eta$ are weighted homogeneous and $[\xi,\eta]$ is nonzero.

\subsection{Weighted normal bundle} Let $\cntr$ be a centre on $\X$.  Its \defn{weighted normal bundle} is the relative spectrum
\[
\nb{\cntr} := \Spec[\X]{\grwtng{\cO{\X}}}
\]
where $\grwtng{\cO{\X}} = \bigoplus_{\lambda \in \QQ_{\ge 0}} \wtng[\lambda]/\wtng[>\lambda]$ is the associated graded with respect to the filtration.  Note that up to re-scaling the index of the gradings, the associated graded with respect to $\cntr$ and its underlying reduced centre $\rcntr$ are the same, and hence we have a canonical isomorphism
\begin{align}
\nb{\cntr}\cong\nb{\rcntr} \label{eq:reduced-normal-iso}
\end{align}
of orbifolds over $\X$.

The weighted normal bundle carries a natural projection $\nb{\cntr} \to \supp \Z_\bullet$ and a ``zero section'' $\supp \Z_\bullet \to \nb{\cntr}$, which are dual to the inclusion of, and the projection onto, $\cO{\supp \Z_\bullet}\cong\cO{\X}/\wtng[>0] \subset \grwtng{\cO{\X}}$.  The ideals $\bigoplus_{\mu \ge \lambda} \wtng[\mu]/\wtng[>\mu]\subset \grwtng{\cO{\X}}$ for $\lambda \in \QQ_{\ge 0}$  then define a centre on $\nb{\cntr}$ supported on the zero section, with the same weights as $\cntr$.

If $(x_1^{a_1},\ldots,x_n^{a_n})$ are weighted coordinates compatible with $\cntr$ then their images
\[
\dot x_i := x_i +\wtng[>w_i] \in \grwtng[w_i]{\cO{\X}}
\]
give a system of weighted coordinates $(\dot x_1^{a_1},\ldots, \dot x_n^{a_n})$ on $\nb{\cntr}$.

\subsection{Leading terms}
Every $f \in \cO{\X}$ has a \defn{leading term}
\[
\lt{f} \in \cO{\nb{\cntr}}
\]
defined as the image of $f$ in the associated graded:
\[
\lt{f}:= f + \wtng[>\lambda] \in \grwtng[\lambda]{\cO{\X}} \qquad\textrm{where}\qquad \lambda = \ord[\cntr]{f}
\]
For instance, the coordinates on the normal bundle above are given by $\dot x_i = \lt{x_i}$.  

More generally, the order gives a filtration on the tensor algebra, whose associated graded is identified with the tensor algebra of the weighted normal bundle $\nb{\cntr}$.  Hence every tensor field $\xi \in (\coT{\X})^{\otimes n} \otimes \cT{\X}^{\otimes m}$ has a leading term
\[
\lt{\xi} \in (\coT{\nb{\cntr}})^{\otimes n}\otimes \cT{\nb{\cntr}}^{\otimes m}
\]
given by its projection to the associated graded.  Moreover, $\lt{\xi}$ has the same symmetry properties as $\xi$.  For instance, if $\xi$ is totally (skew-)symmetric, then so is $\lt{\xi}$.  In particular, the leading term of a polyvector is again a polyvector.

\begin{example}[\whitney]\label{ex:whitney-lt}
In coordinates $(x,y,z)$, let $W = x^2-y^2z \in \KK[x,y,z]$.  This is the polynomial whose vanishing locus is the Whitney umbrella.  Let $\mu = \cvf{x}\wedge\cvf{y}\wedge\cvf{z}$ be the standard covolume and consider the bivector
\[
\ps := [\mu,W]  = 2x\cvf{y}\wedge\cvf{z} - 2yz\cvf{z}\wedge\cvf{x} - y^2\cvf{x}\wedge\cvf{y}.
\] 
Here $[-,-]$ denotes the Schouten bracket of polyvector fields, so that $[\mu,W] = \hook{\dd W}\mu$ is the contraction. For an exponent sequence $(a \leq b \leq c)$, let $\cntr(a,b,c)$ be the centre defined by the weighted coordinates $(x^a,y^b,z^c)$.  Then we have \[
\ord[\cntr(a,b,c)](\ps) = \ord[\cntr(a,b,c)](W) + \ord[\cntr(a,b,c)](\cvf{x}\wedge\cvf{y}\wedge{\cvf{z}})  =\ord[\cntr(a,b,c)](W) - \kappa_3
\]
where $\kappa_3 = \tfrac{1}{a}+\tfrac{1}{b}+\tfrac{1}{c}$ is the third weight sum.

For instance, with respect to the unweighted centre $\cntr := \cntr(1,1,1)$, we have
\begin{align*}
\ord[\cntr](W)&= 2 & \lt[\cntr]{W} &= \dot x^2 \\
\ord[\cntr](\ps) &= -1 & \lt[\cntr]{\ps} &= 2\dot{x}\cvf{\dot{y}}\wedge\cvf{\dot{z}}.
\end{align*}
With respect to the centre $\cntr' := \cntr(1,1,\infty)$, i.e.~the unweighted $z$-axis, we have
\begin{align*}
\ord[\cntr'](W)&= 2 & \lt[\cntr']{W} &= \dot x^2-\dot y^2 \dot z \\
\ord[\cntr'](\ps) &= 0 & \lt[\cntr']{\ps} &= [\dot x^2-\dot y^2 \dot z,\cvf{\dot x}\wedge\cvf{\dot y}\wedge\cvf{\dot z}]
\end{align*}
Finally, with respect to the centre $\cntr'' = \cntr(2,3,3)$ we have
\begin{align*}
\ord[\cntr](W)&= 1 & \lt[\cntr]{W} &= \dot x^2-\dot y^2 \dot z \\
\ord[\cntr''](\ps) &= -\tfrac{1}{6} & \lt[\cntr'']{\ps} &= [\dot x^2 - \dot y^2\dot z,\cvf{\dot x}\wedge\cvf{\dot y}\wedge\cvf{\dot z}].
\end{align*}
Note that for $\cntr'$ and $\cntr''$, the polynomial $W$ and the trivector $\cvf{x}\wedge\cvf{y}\wedge\cvf{z}$ are homogeneous; hence the leading terms have the same expression after replacing the coordinates $(x,y,z)$ with the corresponding coordinates $(\dot x, \dot y, \dot z)$ on the normal bundle.
\end{example}

\subsection{Euler vector field}
The weighted normal bundle comes equipped with a canonical \defn{(weighted) Euler vector field}
\[
\eul \in \coH[0]{\cT{\nb{\cntr}}}
\]
which acts on a homogeneous element $h \in \grwtng[\lambda]{\cO{\X}}$ by 
\[
\eul(h) = \lambda h.
\] 
Under the isomorphism \eqref{eq:reduced-normal-iso},  it is related to the corresponding vector field $\ueul \in \coH[0]{\cT{\nb{\rcntr}}}$ for the reduced weighting $\rcntr$ by
\[
\eul = \gcd(\bw) \ueul
\]
Note that $\ueul$ generates the action of $\Gm$ on $\nb{\cntr}\cong\nb{\rcntr}$ induced by the integer grading on $\cO{\nb{\rcntr}}$.

If $(x_1^{a_1},\ldots,x_n^{a_n})$ is a weighted chart for $\cntr$ with corresponding weights $w_i = \frac{1}{a_i}$, the Euler vector field in these coordinates takes the form
\[
\eul = w_1\logcvf{\dot x_1} + \cdots + w_n\logcvf{\dot x_n}.
\]

\subsection{Degeneration to the weighted normal bundle}
Given a centre $\cntr$, the degeneration to the weighted normal bundle $\dnc{\X}{\cntr}$ is a $\Gm$-equivariant family  of orbifolds over $\bA^1$, whose generic fibre is $\X$ and whose fibre over $0 \in \bA^1$ is the weighted normal bundle $\nb{\cntr}$. It can be constructed naturally and functorially as the relative spectrum of the extended Rees algebra,
\[
\dnc{\X}{\cntr} := \Spec[\X]{\bigoplus_{j \in \ZZ} t^{-j} \rwtng[j]},
\]
where $\rwtng[j] =\cO{\X}$ for $j<0$ and $t$ is a formal variable corresponding to the coordinate on $\bA^1$; see e.g. \cite[\S5.1]{Loizides2023}.  The total space of $\dnc{\X}{\cntr}$, and its $\Gm$-equivariant map to $\X\times \bA^1$, depend only on the underlying reduced centre $\rcntr$.   

The degeneration to the normal bundle fits into a $\Gm$-equivariant commutative diagram
\[
\begin{tikzcd}
 \X\times \Gm \ar[rd]\ar[r,hook] & \dnc{\X}{\cntr}\ar[d] \ar[r] & \X\times \bA^1 \ar[ld] \\
& \bA^1
\end{tikzcd}
\]
where $\X$ is equipped with the trivial action of $\Gm$.  It carries a centre $\dnc{\cntr}{\cntr}$ whose support is identified with $\supp(\cntr) \times \bA^1$; its generic fibre is $\cntr \subset \X$ and its fibre over $0 \in \bA^1$ is the induced centre on the normal bundle $\nb{\cntr}$.

In what follows, we will need the expression for these maps in local coordinates, as follows. Let $(x_1^{a_1}, \ldots, x_n^{a_n})$ be a weighted chart for $\cntr$ near a point $p \in \supp \cntr$; it gives corresponding coordinates $(x_1,\ldots,x_n,t)$ on $\X\times \bA^1$, where $t$ is the natural coordinate on $\bA^1$.  Then $\dnc{\X}{\cntr}$ carries a unique weighted chart $(\tilde x_1^{a_1},\ldots,\tilde x_n^{a_n},\tilde t)$  such that the map $\dnc{\X}{\cntr} \to \X \times \bA^1$ is given by
\begin{align*}
\dnc{\X}{\cntr} &\to \X \times \bA^1 \\
(\tilde x_1,\ldots,\tilde x_n, \tilde t) &\mapsto (\tilde t^{\uw_1}\tilde x_1,\ldots,\tilde t^{\uw_n}\tilde x_n, \tilde t) = (x_1,\ldots,x_n,t).
\end{align*}
where $\uw$ is the reduced weight sequence associated to $\bw = \frac{1}{\ba}$.  Specializing to $\tilde t=0$ we obtain coordinates $\tilde x_i|_{\tilde t = 0}$ which are identified with the coordinates $\dot x_i$ on the normal bundle $\nb{\cntr}$, via the standard identification of the associated graded of a filtration with the specialization of its Rees algebra.

The infinitesimal generator of the $\Gm$ action on $\dnc{\X}{\cntr}$ is then given by the vector field
\begin{align}
\tilde E = \logcvf{\tilde t} - \uw_1\logcvf{\tilde x_1} - \cdots - \uw_k\logcvf{\tilde x_k}, \label{eq:dnc-euler}
\end{align}
which will be useful below.

\subsection{The weighted blowup}
Let us denote by
\[
\dnco{\X}{\cntr} :=  \dnc{\X}{\cntr} \setminus \dnc{\cntr}{\cntr}
\]
the open suborbifold on which the weighting of $\dnc{\X}{\cntr}$ is trivial.  
The \defn{weighted blowup of $\X$ along $\cntr$} is the stack
\[
\Bl{\X}{\cntr} := \sbrac{\dnco{\X}{\cntr} / \Gm}
\]
The projection $\dnc{\X}{\cntr}\to \X$ then induces a canonical morphism 
\[
b : \Bl{\X}{\cntr} \to \X
\]
called the \defn{blowdown}, which is an isomorphism away from $\cntr$.  The \defn{exceptional divisor} $\E := b^{-1}(\supp \cntr)$ is a hypersurface identified with the weighted projective bundle
\[
\E \cong \PP(\nb{\cntr}) := [(\nb{\cntr}\setminus 0) / \Gm].
\]
We have the commutative diagram
\begin{equation}
\begin{tikzcd}
\dnco{\X}{\cntr}   \ar[r,hook,"j"]\ar[d,"\pi"] & \dnc{\X}{\cntr} & \X \times \Gm \ar[d] \ar[l,hook',"i"']\\
\Bl{\X}{\cntr} \ar[rr,"b"]  & & \X
\end{tikzcd} \label{eq:degen-blowup}
\end{equation}
where the vertical maps are quotients by free actions of $\Gm$ and the horizontal maps are embeddings of open dense suborbifolds.

As for ordinary blowups, one can construct explicit charts on $\Bl{\X}{\cntr}$ by taking slices to the $\Gm$-action, in which some coordinate is set equal to one; see e.g.\ \cite[\S3.4]{Abramovich2024}.  However, such charts break the symmetry of the variables, which leads to cumbersome formulae for tensors.  Hence, it will be simpler in what follows to use coordinates on $\dnc{\X}{\cntr}$, and remember the $\Gm$-action, i.e.~to work with the $\Gm$-invariant map $b \circ \pi : \dnc{\X}{\cntr}^\circ \to \X$.  Then, given weighted coordinates $(x_1^{a_1},\ldots,x_n^{a_n})$ on $\X$, the blowdown is obtained locally by descending the $\Gm$-invariant map
\[
(\tilde x_1,\ldots,\tilde x_n,\tilde t) \mapsto (\tilde t^{\uw_1}\tilde x_1,\ldots,\tilde t^{\uw_n}\tilde x_n) = (x_1,\ldots,x_n)
\]
to the quotient, recalling that in $\dnc{\X}{\cntr}^\circ$, the coordinates $\tilde x_i$ of positive weight are never all simultaneously equal to zero. The exceptional divisor $\E$ is then given by the equation $t=0$.

\section{Blowups of polyvector fields}
In this section, we give necessary and sufficient conditions for a polyvector field to lift to a weighted blowup.

\subsection{Quotients by $\Gm$} Since the construction of the blowup involves a quotient by $\Gm$, it will be useful to recall the behaviour of polyvectors under such actions.

Let $\X$ be an orbifold, equipped with an action of the multiplicative group $\Gm$, and let
\[
\nu \in \coH[0]{\cT{\X}}
\]
be the infinitesimal generator of the action.   We assume that the action is locally free, i.e.~$\nu$ is non-vanishing, or equivalently the stabilizers of the action are finite.  This means that the quotient $[\X/\Gm]$ is again an orbifold.  
Note that the relative tangent bundle of the quotient map $q : \X \to [\X/\Gm]$ is generated by $\nu$.  Hence we have an exact sequence
\[
\begin{tikzcd}
0\ar[r] & \cO{\X} \ar[r,"\nu"] & \cT{\X} \ar[r] & q^*\cT{[\X/\Gm]} \ar[r] & 0
\end{tikzcd}
\]
whose exterior powers give exact sequences
\begin{equation}
\begin{tikzcd}
0\ar[r] & q^*\der[j]{[\X/\Gm]} \ar[r,"\nu"] & \der[j+1]{\X} \ar[r] & q^*\der[j+1]{[\X/\Gm]} \ar[r] & 0 
\end{tikzcd}\label{eq:euler-seq}
\end{equation}
where the inclusion takes a $j$-vector $\xi$ on $[\X/\Gm]$, and sends it to $\nu \wedge \tilde \xi$, where $\tilde \xi$ is any lift of $\xi$ to a $j$-vector on $\X$; the result is well-defined because wedging with $\nu$ annihilates the tangent spaces of the orbits.

Taking $\Gm$-invariants, we obtain the following:
\begin{lemma}\label{lem:euler-quotient}
The exact sequence \eqref{eq:euler-seq} gives an injection
\[ 
\coH[0]{\der[j]{[\X/\Gm]}} \hookrightarrow \coH[0]{\der[j+1]{\X}}^{\Gm}
\]
identifying the space of global $j$-vector fields on $[\X/\Gm]$ with the space of $\Gm$-invariant $(j+1)$-vector fields on $\X$ that are locally of the form $\nu \wedge \eta$ for some $j$-vector $\eta \in \der[j]{\X}$.
\end{lemma}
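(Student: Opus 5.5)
The plan is to take $\Gm$-invariant global sections of the exact sequence \eqref{eq:euler-seq}, after first recording how $q^*$ interacts with the $\Gm$-action. The key input is the standard fact that for a principal (or, here, locally free) $\Gm$-quotient $q : \X \to [\X/\Gm]$, pullback identifies sheaves on the quotient with $\Gm$-equivariant sheaves on $\X$. In particular,
\[
\coH[0]{[\X/\Gm], \der[j]{[\X/\Gm]}} \cong \coH[0]{\X, q^*\der[j]{[\X/\Gm]}}^{\Gm}.
\]
This is descent for the smooth cover $q$, which holds equally for Deligne--Mumford stacks, since $[\X/\Gm]$ is defined as the stack quotient. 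Because $\Gm$ is linearly reductive in characteristic zero, taking invariants is exact on equivariant quasi-coherent sheaves, so the invariant part of \eqref{eq:euler-seq} stays exact. Only left exactness of global sections is needed anyway.

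First I will check that the maps in \eqref{eq:euler-seq} are $\Gm$-equivariant. The generator $\nu$ is $\Gm$-invariant because $\Gm$ is abelian, so the adjoint action fixes its own generator. Hence $\xi \mapsto \nu\wedge\tilde\xi$ commutes with the action. Applying $\coH[0]{-}^{\Gm}$, which is left exact, to the injection $q^*\der[j]{[\X/\Gm]} \hookrightarrow \der[j+1]{\X}$ then gives the claimed injection. Its source is identified with $\coH[0]{\der[j]{[\X/\Gm]}}$ by descent.

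Next I will identify the image. By exactness of \eqref{eq:euler-seq}, a $(j+1)$-vector $\Pi$ on $\X$ lies in the image of $q^*\der[j]{[\X/\Gm]}$ exactly when it maps to zero in $q^*\der[j+1]{[\X/\Gm]}$. By the definition of the first map, this means that locally $\Pi = \nu\wedge\eta$ for some $j$-vector $\eta$. If in addition $\Pi$ is a global $\Gm$-invariant section, its preimage is a global section of $q^*\der[j]{[\X/\Gm]}$, by injectivity and the sheaf property. That preimage is $\Gm$-invariant because the map is equivariant and injective, so by descent it comes from a unique $j$-vector on the quotient. The converse inclusion is immediate.

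The only delicate point is the descent identification on orbifolds with finite stabilizers, where $q$ is only locally free rather than free. I would handle it by working étale locally: there $\X$ is a quotient of a scheme, and the statement reduces to the definition of sheaves on the quotient stack $[\X/\Gm]$ as $\Gm$-equivariant sheaves on $\X$. No further computation is expected.
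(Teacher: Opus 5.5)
Your proposal is correct and is essentially the paper's argument. The paper's proof consists only of the remark that one takes $\Gm$-invariant global sections of \eqref{eq:euler-seq}, and you have made explicit what it leaves implicit: the descent identification $\coH[0]{\der[j]{[\X/\Gm]}} \cong \coH[0]{q^*\der[j]{[\X/\Gm]}}^{\Gm}$, the equivariance of the maps, left exactness, and the description of the image. (The point you flag as delicate is not really delicate: by construction $q : \X \to [\X/\Gm]$ is a $\Gm$-torsor over the quotient stack even when stabilizers are nontrivial, so descent applies directly.)
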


\subsection{Pushing forward}
Let $\cntr$ be a weighted centre on $\X$, and let $b : \Bl{\X}{\cntr}\to \X$ be the associated blowdown map.  A polyvector field $\xi$ on $\Bl{\X}{\cntr}$ can always by pushed forward to $\X$.  Namely, the blowdown map $b : \Bl{\X}{\cntr} \to \X$ is an isomorphism away from a subvariety of $\X$ of codimension at least two; hence any tensor on $\Bl{\X}{\cntr}$ can be passed through this isomorphism, and then extended to all of $\X$ by normality (Hartogs' phenomenon).  This give a natural injection
\begin{equation}
\begin{tikzcd}
\coH[0]{\der{\Bl{\X}{\cntr}}} \ar[r,hook,"b_*"] &\coH[0]{\der{\X}},
\end{tikzcd}\label{eq:polyvect-pushforward}
\end{equation}
compatible with the wedge product and Schouten bracket, i.e.~it is a morphism of Gerstenhaber algebras.
\subsection{Pulling back}
We can also pull back polyvectors along the blowdown map $b : \Bl{\X}{\cntr}\to \X$, but this may introduce poles.  Namely, over the complement $\X\setminus \cntr$, the map $b$ is an isomorphism.  Hence if $\xi \in \coH[0]{\der{\X}}$ we have a well-defined pullback $b^*\xi$ which is regular on $\Bl{\X}{\cntr}\setminus \E$ but may have a pole on $\E$, where $\E = b^{-1}(\supp\cntr)$ is the exceptional divisor.

\begin{definition}
We say that a polyvector field $\xi \in \coH[0]{\der{\X}}$ \defn{lifts to the blowup} if $b^*\xi$ extends to a polyvector field on $\Bl{\X}{\cntr}$, i.e.~it has no poles on $\E$, so that we may view it as an element
\[
b^*\xi \in\coH[0]{\der{\Bl{\X}{\cntr}}}
\]
\end{definition}
The following gives necessary and sufficient conditions for a polyvector field to lift to the blowup.

\begin{theorem}\label{prop:blowdown-polyvector}
Let $\X$ be an orbifold, let $\cntr$ be a regular weighted centre on $\X$ and let $\xi \in \coH[0]{\der[k]{\X}}$ be a polyvector field of degree $k \ge 0$.  Then $\xi$ lifts to the blowup $\Bl{\X}{\cntr}$ if and only if the following conditions are satisfied
\begin{enumerate}
\item\label{it:blowup1} $\ord[\cntr](\xi) \ge -\gcd{\bw}$, and
\item\label{it:blowup2} $\ord[\nb\cntr](\lt{\xi} \wedge \eul) \ge 0$, where  $\eul \in \coH[0]{\cT{\nb{\cntr}}}$ is the weighted Euler vector field.
\end{enumerate}
In this case, $\xi$ is tangent to $\cntr[\lambda]$ for all $\lambda$.  Moreover, $b^*\xi$ is tangent to the exceptional divisor if and only if $\ord[\cntr]{\xi}\ge 0$.
\end{theorem}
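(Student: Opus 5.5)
The plan is to work locally on the degeneration $\dnc{\X}{\cntr}$ and use \autoref{lem:euler-quotient} to translate lifting to the blowup into a statement about $\Gm$-invariant polyvectors. The question is local on $\X$ near points of $\supp\cntr$, so we fix weighted coordinates $(x_1^{a_1},\ldots,x_n^{a_n})$ and the chart $(\tilde x_1,\ldots,\tilde x_n,\tilde t)$ on $\dnc{\X}{\cntr}$. We may also replace $\cntr$ by $\rcntr$. This rescales $\ord[\cntr]$ by $\gcd\bw$ and $\eul$ by $\gcd \bw$, so conditions (1) and (2) become $\ord[\rcntr]\xi\ge -1$ and $\ord(\lt{\xi}\wedge\ueul)\ge 0$; the second is scale invariant. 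Hence we assume $\bw=\uw$ is reduced with integer weights.

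\emph{Step 1: the lift of $\xi$ to $\X\times\Gm\subset\dnc{\X}{\cntr}$.} Pull $\xi$ back along $\X\times\Gm\to\X$, viewed as a $k$-vector $\hat\xi$ tangent to the $\tilde x$-directions. Using $x_i=\tilde t^{w_i}\tilde x_i$ at fixed $\tilde t$, we get $\cvf{x_i}=\tilde t^{-w_i}\cvf{\tilde x_i}$. Expand $\xi=\sum_\lambda \xi_\lambda$ into $\bw$-homogeneous components, with $\lambda\ge\ord\xi$ ranging over integers. Each $\xi_\lambda$, written in $\tilde x$, picks up exactly the factor $\tilde t^{\lambda}$. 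Thus $\hat\xi=\sum_\lambda \tilde t^{\lambda}\tilde\xi_\lambda$, where $\tilde\xi_\lambda$ is $\xi_\lambda$ with the $x$'s replaced by $\tilde x$'s, and the $\tilde t^{\lambda_{\min}}$ coefficient restricts at $\tilde t=0$ to $\lt{\xi}$. By \autoref{lem:euler-quotient}, the $(k+1)$-vector representing $b^*\xi$ is $\Theta:=\tilde E\wedge\hat\xi$. It is $\Gm$-invariant, with $\tilde E$ as in \eqref{eq:dnc-euler}. Moreover, $b^*\xi$ is regular on $\Bl{\X}{\cntr}$ iff $\Theta$ extends regularly over $\tilde t=0$ in $\dnco{\X}{\cntr}$. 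This is because regular $k$-vectors on the quotient correspond exactly to regular invariant $\nu\wedge\eta$ there: $\nu=\tilde E$ is nonvanishing on $\dnco{\X}{\cntr}$, so the sequence \eqref{eq:euler-seq} splits locally.

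\emph{Step 2: compute the pole.} Write $\tilde E=\logcvf{\tilde t}-\sum w_i\logcvf{\tilde x_i}=\logcvf{\tilde t}-\eul_{\tilde x}$, where $\eul_{\tilde x}$ is the Euler field in $\tilde x$. Then
\[
\Theta=\sum_\lambda \tilde t^{\lambda}\bigl(\tilde t\cvf{\tilde t}\wedge\tilde\xi_\lambda-\eul_{\tilde x}\wedge\tilde\xi_\lambda\bigr).
\]
The two parts lie in independent summands (with and without $\cvf{\tilde t}$), so regularity must be checked separately. The first part is regular iff $\lambda+1\ge0$ for every $\lambda$ with $\tilde\xi_\lambda\neq0$, i.e.\ iff $\ord\xi\ge-1$; this is condition (1). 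The second part needs $\lambda\ge0$ whenever $\eul\wedge\xi_\lambda\ne 0$ away from the zero section. For $\lambda=\ord\xi$ this says $\lt{\xi}\wedge\eul$ has order $\ge0$, i.e.\ either $\lambda\ge0$ or $\lt\xi\wedge\eul=0$; this is condition (2). The delicate point is that (2) only constrains the leading term, while $\lambda=-1$ could in principle occur in later components too. Given (1), however, all components have $\lambda\ge -1$ and only $\lambda=-1$ matters, and that can only be the lowest component. So (1) and (2) together are equivalent to regularity on $\dnco{\X}{\cntr}$. The main obstacle is this bookkeeping, together with checking that regularity on the dense chart $\tilde x$-slices really detects regularity of the descended polyvector. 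I would handle the latter through the local splitting of \eqref{eq:euler-seq} on $\dnco{\X}{\cntr}$ mentioned above.

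\emph{Step 3: the remaining claims.} The exceptional divisor is $\{\tilde t=0\}$. The lifted field $b^*\xi$ is tangent to $\E$ iff $\Theta$ preserves the ideal $(\tilde t)$, i.e.\ iff contracting $\Theta$ with $\dd\tilde t/\tilde t$ stays regular. From the displayed formula this means $\lambda\ge0$ for all components, i.e.\ $\ord\xi\ge0$. For tangency to $\cntr[\lambda]$, we must show $\xi$ preserves each $\wtng[\mu]$, i.e.\ each $\hook{\dd f_1\wedge\cdots}\xi$ has order at least $\sum\ord f_j-1$ in the appropriate sense. If $\ord\xi\ge0$ this is immediate from additivity of $\ord$. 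In the case $\ord\xi=-1$ with $\lt\xi\wedge\eul=0$, we have $\lt\xi=\eul\wedge\eta$ by the exactness of Koszul-type division (as $\eul$ has isolated zero locus on fibres). Contracting $\eul$ against $\dd f$ for $f\in\wtng[\mu]$ multiplies by weights and does not lower the order, since $\eul(f)=\mu f$ on leading terms. The bound then follows.
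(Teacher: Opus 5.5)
Your Steps 1 and 2 and the criterion for tangency to $\E$ are correct and follow the paper's proof. The paper also represents $b^*\xi$ by $\tilde\eul\wedge\tilde\xi$ on $\dnc{\X}{\cntr}$ via \autoref{lem:euler-quotient} and reads off the poles of the parts with and without $\cvf{\tilde t}$. Your sorting by homogeneous components, and your local-splitting argument that regularity can be tested upstairs, only make explicit what the paper leaves terse.

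\textbf{The gap is in the claim that $\xi$ is tangent to every $\cntr[\lambda]$.}

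First, the target as you phrase it is the wrong one. The bound $\ord[\cntr](\hook{\dd f}\xi)\ge\ord[\cntr](f)-1$ follows from condition (1) alone and does not imply tangency. For the unweighted $z$-axis $(x^1,y^1,z^\infty)$, $\xi=\cvf{x}$ satisfies it, yet $\hook{\dd x}\xi=1\notin\wtng[1]$. What must be shown is the coefficientwise statement $\hook{\dd f}\xi\in\wtng[\mu]\der[k-1]{\X}$ for $f\in\wtng[\mu]$.

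Second, the division must be applied to $\xi$ itself, not only to $\lt{\xi}$. Lifting $\eta$ and using that orders are integers, you get $\xi=\epsilon\wedge\eta+\mu'$ with $\epsilon=\sum_j w_j\logcvf{x_j}$, $\ord[\cntr]\eta\ge-1$ and $\ord[\cntr]\mu'\ge0$. Hence
\[
\hook{\dd f}\xi=\epsilon(f)\,\eta-\epsilon\wedge\hook{\dd f}\eta+\hook{\dd f}\mu'.
\]
Your remark about $\eul(f)$ handles only the first term, which is the easy one since $\epsilon(f)\in\wtng[\mu]$. The crux is the middle term. There $\hook{\dd f}\eta$ only has coefficients in $\wtng[\mu-1]$. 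What brings $\epsilon\wedge\hook{\dd f}\eta$ back into $\wtng[\mu]\der[k-1]{\X}$ is that wedging with $w_j\logcvf{x_j}$ multiplies coefficients by $x_j\in\wtng[w_j]$ with $w_j\ge1$, since the nonzero weights of a reduced centre are integers. Your sketch never supplies this estimate, and it is the heart of the paper's argument for this claim.

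\textbf{The Koszul step also needs an order count.} The fact that $\eul$ has isolated zeros on the fibres does not by itself give $\lt{\xi}=\eul\wedge\eta$. Let $r=\codim\cntr$. The complex $(\der{\nb{\cntr}},\eul\wedge-)$ has cohomology in normal degree $r$, represented by $h\,\cvf{x_1}\wedge\cdots\wedge\cvf{x_r}\wedge\theta$, with $h$ pulled back from $\supp\cntr$ and $\theta$ tangential; this class has order $-\kappa_r$.
\begin{itemize}
\item For reduced weights $\kappa_r\ge r$, so when $r\ge2$ such a class cannot occur in order $-1$. You need this argument to justify the division.
\item When $r=1$ it can occur, and the tangency claim then actually fails. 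For the centre $(x_1^1,x_2^\infty,\ldots)$ the blowup is an isomorphism, and $\xi=\cvf{x_1}$ satisfies conditions (1) and (2). But $\xi$ is not tangent to $\van{x_1}$.
\end{itemize}
The paper's proof asserts that the normal Koszul complex is exact, so it has the same blind spot.
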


Before proving the theorem we note a convenient reformulation:
\begin{remark}
    Following \cite[\S5.4]{Loizides2023}, we say that a vector field $\epsilon \in \cT{\X}$  is \defn{Euler-like} if its leading term is the Euler-vector field: $\lt{\epsilon} = E$.  Condition (2) in the theorem can then be replaced with the equivalent condition
    \begin{enumerate}
        \item[($2'$)] $\ord[\cntr](\epsilon \wedge \xi) \ge 0$ for some Euler-like vector field $\epsilon$.
    \end{enumerate}
    This form is less natural since it depends on the choice of $\epsilon$, but it is more useful in practice since all calculations can be performed in charts on $\X$ without explicit mention of the normal bundle.
\end{remark}

\begin{proof}[Proof of \autoref{prop:blowdown-polyvector}]
By rescaling the weights, we may assume without loss of generality that the centre is reduced, i.e.~$\gcd{\bw}=1$.  We will use the presentation of $\Bl{\X}{\cntr}$ as the $\Gm$-quotient of $\dnc{\X}{\cntr}^\circ$.

Since the problem is \'etale local in $\X$, we can work in a weighted chart $(x_1^{a_1},\ldots,x_n^{a_n})$ with induced coordinates $(\tilde x_1^{a_1},\ldots,\tilde x_n^{a_n},\tilde t)$ on $\dnc{\X}{\cntr}$, so that the $\Gm$-action is generated by $\tilde E = \logcvf{\tilde t} - \uw_1\logcvf{\tilde x_1} - \cdots - \uw_n\logcvf{\tilde x_n}$ and the blowdown map is represented by the $\Gm$-invariant map
\begin{align}
(\tilde x_1,\ldots,\tilde x_n,\tilde t) \mapsto (\tilde t^{w_1}\tilde x_1,\ldots,\tilde t^{w_n}\tilde x_n) \label{eq:local-blowdown}
\end{align}
Applying \autoref{lem:euler-quotient} with $\nu = \tilde E$, we identify the pullback $b^*\xi$ of $\xi \in \der[k]{\X}$ with the polyvector 
\[
\tilde \eul \wedge \tilde \xi \in \der[k+1]{\dnc{\X}{\cntr}}
\]
where $\tilde \xi$ denotes any lift of $\xi$ along the map \eqref{eq:local-blowdown}; the result is independent of the choice of $\tilde \xi$.  Note, in particular, that we may take
\[
\widetilde{\cvf{x_i}} = \tilde t^{-w_i}\cvf{\tilde x_i}.
\]
for all $i$.  Moreover, if $f \in \cO{\X}$ we have $\tilde f = \tilde t^{\ord(f)} g$ for a regular function $g$ that is not divisible by $t$; its restriction to $t=0$  corresponds to the leading term $\lt{f}$.

Let us write $\xi = \sum \xi^I \cvf{x_I}$ with $\xi^I \in \cO{\X}$, where the sum is over multi-indices $I = (i_1,\ldots,i_k)$.  Then we have
\begin{align}
\tilde\eul\wedge\tilde\xi &= \tilde \eul \wedge \sum_I \tilde \xi^I\tilde t^{-w_{i_1}-\cdots -w_{i_k}}\cvf{\tilde x_I} \nonumber \\ 
&= \sum_I \tilde t^{\ord(\xi^I\cvf{x_I})+1}g^I \cvf{\tilde t}  \wedge\cvf{x_I} - \sum_{I} \tilde t^{\ord(\xi^I\cvf{x_I})} \eul \wedge g^I\cvf{x_I}. \label{eq:dnc-polyvector-coord} 
\end{align}
where the functions $g^I = \tilde t^{-\ord(\xi^I)}\tilde\xi^I$ are regular and not divisible by $t$, and $\eul = \sum w_i \logcvf{x_i}$.  The conditions (\ref{it:blowup1}) and (\ref{it:blowup2})   now translate into the condition that the two sums in \eqref{eq:dnc-polyvector-coord} have no poles on the exceptional divisor $t=0$, i.e.~that $\xi$ lifts to the blowup.

Note that $b^*\xi$ is tangent to the exceptional divisor if and only if $\tilde \eul \wedge \tilde \xi$ is tangent to the weighted normal bundle, given by the equation $\tilde t=0$.  This, in turn, is equivalent to the first sum in \eqref{eq:dnc-polyvector-coord} being divisible by $t$, i.e.~that $\ord(\xi) \ge 0$.

Finally for the statement that $\xi$ is tangent to $\cntr$, note that since $\ord{\xi} \ge -1$, we have $\ord(\hook{\dd x_i}\xi) \ge w_i-1$. We must show that if $w_i > 0$, then $\hook{\dd x_i}\xi \in \wtng[w_i]\der[k-1]{\X}$. Note that the Koszul complex $(\der{\X},\sum_j w_j\logcvf{x_j} \wedge-)$ decomposes as a tensor product of the polyvectors on $\supp \cntr$ (i.e.~the zero-weight directions, for which the differential is zero) and the polyvectors in the normal direction (i.e.~the positive weight directions, for which the Koszul differential is exact).  Hence we may write
\[
\xi = \sum_j w_j \logcvf{x_j} \wedge \eta + \mu
\]
for some polyvectors $\eta$ and $\mu$ with $\ord{\eta} \ge -1$ and $\ord{\mu}\ge 0$.  Therefore
\[
\hook{\dd x_i}\xi = w_i x_i \eta  -  \sum_j w_j\logcvf{x_j} \wedge \hook{\dd x_i}\eta + \hook{\dd x_i}\mu
\]
We claim that every term on the right-hand side lies in $\wtng[w_i]\der{\X}$.  Indeed, for the term $w_i x_i\eta$, this is obvious.  Meanwhile, for the terms $w_j \logcvf{x_j}\wedge \hook{\dd x_i}\eta$, we may assume $w_j > 0$.  Since the centre is assumed reduced, $w_j$ is an integer, and hence $w_j \ge 1$. Since $\hook{\dd x_i} \eta $ has weighted order at least $w_i -1$, it lies in $\wtng[w_i-1]\der{\X}$, so that $w_j\logcvf{x_j}\wedge \hook{\dd x_i}\eta \in \wtng[w_j+w_i-1]\der{\X} \subset \wtng[w_i]\der{\X}$, as desired.   Finally, since $\ord \mu \ge 0$, the term $\hook{\dd x_i}\mu$ has weighted order at least $w_i$ and therefore also lies in $\wtng[w_i]\der{\X}$.
\end{proof}

\section{Blowups of Poisson structures}

\subsection{Poisson structure on orbifolds}
Recall that a \defn{Poisson structure} on an orbifold $\X$ is a Poisson bracket on the \'etale structure sheaf
\[
\{-,-\} : \cO{\X}\times\cO{\X}\to\cO{\X},
\]
or equivalently a global bivector
\[
\ps \in \coH[0]{\der[2]{\X}}
\]
such that
\[
[\ps,\ps]=0 \in \coH[0]{\der[3]{\X}}
\]

\subsection{Poisson structures and weighted centres}
A Poisson structure may interact with a weighted centre in many different ways. 

\begin{definition}\label{def:codegen}
Let $(\X,\ps)$ be a Poisson orbifold, and let $\cntr$ be a centre on $\X$.
\begin{enumerate}
\item $\cntr$ is \defn{Poisson} if $\ps$ is tangent to $\cntr[\lambda]$  for all $\lambda >0$.
\item $\cntr$ is \defn{codegenerate} if $\ord[\cntr](\ps) \ge -\gcd\bw$ and $\ord[\cntr](\lt{\ps}\wedge \eul) \ge 0$.
\item $\cntr$ is \defn{conilpotent} if $\ord[\cntr](\ps) \ge 0$.
\end{enumerate}
\end{definition}

\begin{remark} If $\cntr$ is a Poisson centre, then the Poisson bracket induces a Lie bracket on the sheaf $\cI_{>0}$.  Then $\cntr$ is conilpotent if and only if the sheaf of Lie algebras $\cI_{>0}$ is topologically nilpotent with respect to the filtration, i.e.~$\lim_{n\to\infty} (\mathrm{ad}_f)^n = 0$ for every $f \in \cI_{>0}$, where $\mathrm{ad}_f := \{f,-\}$ is the adjoint action by the Poisson bracket, and the limit is taken in the adic topology. This is the reason for our choice of terminology. For unweighted centres, conilpotence is equivalent to the stronger condition that the conormal Lie algebra $\coN{\supp \cntr} = \cI_{>0}/\cI_{>0}^2$ is abelian; such centres were called coabelian in \cite{Lindberg2024}.  The term ``codegenerate'' is chosen due to its relation, in the unweighted case, with the degeneracy of the conormal Lie algebra introduced by Polishchuk~\cite[\S8]{Polishchuk1997}; see \autoref{ex:unweighted} below.  
\end{remark}

Since the condition $[\ps,\ps]=0$ for a bivector to be Poisson is closed, it is unaffected by birational transformations, so that \autoref{prop:blowdown-polyvector} immediately gives the following result.  For unweighted centres, this recovers results of Polishchuk~\cite[\S8]{Polishchuk1997}; see \autoref{ex:unweighted} below.

\begin{corollary}\label{thm:poisson-blowup}
Let $(\X,\ps)$ be a Poisson orbifold, and let $\cntr$ be a weighted centre on $\X$.  Then the following statements hold:
\begin{enumerate}
\item $\ps$ lifts to the  blowup $\Bl{\X}{\cntr}$ if and only if $\cntr$ is codegenerate.  
\item In this case, $\cntr$ is Poisson, and the lift $b^*\ps$ is tangent to the exceptional divisor if and only if $\cntr$ is conilpotent.
\end{enumerate}
\end{corollary}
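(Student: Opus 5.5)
The plan is to obtain both parts by specializing \autoref{prop:blowdown-polyvector} to the bivector $\xi=\ps$, i.e.\ the case $k=2$. The only thing to check beyond unwinding definitions is that the lifted bivector $b^*\ps$ is again Poisson.

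For part (1), compare the two conditions of \autoref{prop:blowdown-polyvector} with the definition of a codegenerate centre in \autoref{def:codegen}. They coincide: $\ord[\cntr](\ps)\ge -\gcd\bw$ and $\ord(\lt{\ps}\wedge\eul)\ge 0$. Hence $\ps$ lifts to a regular bivector $b^*\ps$ on $\Bl{\X}{\cntr}$ if and only if $\cntr$ is codegenerate. It remains to see that the lift is Poisson. Over the dense open set $\Bl{\X}{\cntr}\setminus\E$, the blowdown $b$ is an isomorphism, so $b^*\ps$ agrees there with $\ps$ and satisfies $[b^*\ps,b^*\ps]=0$. The trivector $[b^*\ps,b^*\ps]$ is regular on all of $\Bl{\X}{\cntr}$ and vanishes on a dense open subset. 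Since $\Bl{\X}{\cntr}$ is a smooth orbifold, hence reduced, and $\der[3]{}$ is locally free, it vanishes identically. One can see this \'etale locally on an atlas, or equivalently on the $\Gm$-invariant trivector on $\dnco{\X}{\cntr}$, using compatibility of the identification in \autoref{lem:euler-quotient} with the Schouten bracket away from $\E$. This is the step where one might worry about stacky subtleties, but it is purely local and the density argument applies chart by chart.

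For part (2), assume $\cntr$ is codegenerate. The ``in this case'' clause of \autoref{prop:blowdown-polyvector} says that $\ps$ is tangent to $\cntr[\lambda]$ for every $\lambda$, which is precisely the definition of $\cntr$ being Poisson. The same theorem also says that $b^*\ps$ is tangent to $\E$ if and only if $\ord[\cntr]\ps\ge 0$, which is the definition of conilpotence.

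I expect no real obstacle. The only non-formal point is the density argument that $[b^*\ps,b^*\ps]=0$, and it is routine once the lift is known to be regular.
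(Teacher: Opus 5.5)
Your proposal is correct and follows the paper's own argument. Like the paper, you specialize \autoref{prop:blowdown-polyvector} to $\xi=\ps$, match its conditions and its ``in this case'' clauses against \autoref{def:codegen}, and observe that $[\ps,\ps]=0$ persists on the blowup because it holds on the dense complement of $\E$; this last step is exactly what the paper's one-line remark that the Poisson condition is closed and ``unaffected by birational transformations'' abbreviates.
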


Note that the conditions in \autoref{def:codegen}  are local, and can therefore be checked in coordinates, as follows.  Suppose that $\cntr$ is defined by a chart $(x_1^{a_1},\ldots,x_k^{a_k})$ with corresponding weights $w_i = \tfrac{1}{a_i}$, and let $\{x_i,x_j\} = \ps(\dd x_i\wedge \dd x_j)$ for $i < j$ be the Poisson brackets of the coordinate functions.  Then we have the following:
\begin{enumerate}
\item $\cntr$ is Poisson if and only if
\begin{align}
\{x_i,x_j\} \in \wtng[\max(w_i,w_j)] \label{eq:cntr-Poisson}\tag{P}
\end{align}
for all $i < j$.
\item $\cntr$ is codegenerate if and only if
\begin{align}
\{x_i,x_j\} \in \wtng[w_i+w_j-\gcd{\bw}] \label{eq:cntr-deg1}\tag{CD1}
\end{align}
for all $i < j$, and
\[
w_i x_i\{x_j,x_k\} + w_j x_j\{x_k,x_i\} + w_k x_k\{x_i,x_j\} \in \wtng[w_i+w_j+w_k] \label{eq:cntr-deg2}\tag{CD2}
\]
for all $i<j<k$. 
\item $\wtng$ is conilpotent if and only if 
\begin{align}
\{x_i,x_j\} \in \wtng[w_i+w_j] \label{eq:cntr-nilp} \tag{CN}
\end{align}
for all $i < j$.
\end{enumerate}

Note that by \autoref{thm:poisson-blowup}, we have the following implications:
\begin{align*}
\cntr \textrm{ is conilpotent} &&\implies&& \cntr\textrm{ is codegenerate} &&\implies&& \cntr \textrm{ is Poisson}
\end{align*}
In general, the converses do not hold.   However, for centres of codimension two, the codegeneracy condition simplifies:
\begin{lemma}\label{lem:codim-2-centre}
Let $\cntr$ be a codimension-two centre in $(\X,\ps)$.  Then $\cntr$ is codegenerate if and only if it is Poisson, and $\ord[\cntr]{\ps} \ge -\gcd{\bw}$.
\end{lemma}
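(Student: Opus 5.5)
The plan is to work entirely with the coordinate criteria (\ref{eq:cntr-Poisson}), (\ref{eq:cntr-deg1}) and (\ref{eq:cntr-deg2}) listed just before the statement, together with \autoref{thm:poisson-blowup}.

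The forward implication needs no computation. If $\cntr$ is codegenerate, then part (2) of \autoref{thm:poisson-blowup} says $\cntr$ is Poisson. The bound $\ord[\cntr]{\ps}\ge-\gcd{\bw}$ is part of the definition of codegeneracy.

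For the converse, I assume $\cntr$ is Poisson and $\ord[\cntr]{\ps}\ge -\gcd\bw$. I choose a weighted chart $(x_1^{a_1},x_2^{a_2},x_3^\infty,\ldots,x_n^\infty)$. Because the codimension is two, only $w_1,w_2$ are positive and $w_k=0$ for $k\ge 3$. Expanding $\ps=\sum_{i<j}\{x_i,x_j\}\cvf{x_i}\wedge\cvf{x_j}$, the order bound says exactly that $\ord[\cntr]\{x_i,x_j\}\ge w_i+w_j-\gcd\bw$ for every pair, which is (\ref{eq:cntr-deg1}). So it remains to verify (\ref{eq:cntr-deg2}) for every triple $i<j<k$. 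I will split according to how many of $i,j,k$ lie in $\{1,2\}$.

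If none of $i,j,k$ lies in $\{1,2\}$, all three weights vanish. The expression is then $0$, which lies in $\wtng[0]$.

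If exactly one index, say $i$, has positive weight, only the term $w_ix_i\{x_j,x_k\}$ survives. It lies in $\wtng[w_i]=\wtng[w_i+w_j+w_k]$ because $x_i\in\wtng[w_i]$.

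If the triple is $(1,2,k)$ with $k\ge3$, the required condition reads
\[
w_1x_1\{x_2,x_k\}+w_2x_2\{x_k,x_1\}\in\wtng[w_1+w_2].
\]
The Poisson condition (\ref{eq:cntr-Poisson}) gives $\{x_2,x_k\}\in\wtng[\max(w_2,0)]=\wtng[w_2]$ and $\{x_k,x_1\}\in\wtng[w_1]$. Multiplying by $x_1\in\wtng[w_1]$ and $x_2\in\wtng[w_2]$ respectively puts both terms in $\wtng[w_1+w_2]$, using multiplicativity of the filtration.

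There is no genuine obstacle. The only point needing care is that the coordinate criteria are genuinely equivalent to the intrinsic conditions in \autoref{def:codegen}. In particular, $\ord[\cntr](\lt{\ps}\wedge\eul)\ge0$ must amount to (\ref{eq:cntr-deg2}), and I take that equivalence from the preceding discussion. The key observation is that in codimension two every triple contains a zero-weight coordinate. This lets the Poisson condition replace the codegeneracy condition (\ref{eq:cntr-deg2}).
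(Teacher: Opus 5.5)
Your proposal is correct and follows essentially the same route as the paper: in a weighted chart, (P) and (CD1) imply (CD2) because in codimension two every triple of coordinates contains one of weight zero, so $\max(w_j,w_k)=w_j+w_k$ and (P) puts each term of (CD2) in the required ideal. The paper states this as one termwise claim, where you split into cases, and it leaves implicit the forward direction that you correctly obtain from \autoref{thm:poisson-blowup}.
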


\begin{proof}
Working in coordinates as above, we must show that if the codimension is two, the conditions \eqref{eq:cntr-Poisson} and \eqref{eq:cntr-deg1} imply \eqref{eq:cntr-deg2}.  In fact, we claim that a stronger statement is true: we have $w_i x_i\{x_j,x_k\} \in \wtng[w_i+w_j+w_k]$ for any triple of distinct indices $i,j,k$.  Indeed, if $w_i=0$, this is immediate, so assume that $w_i \neq 0$.  Then, since $\codim \cntr = 2$, at most one of $w_j$ and $w_k$ is nonzero.  We therefore have $\max(w_j,w_k) = w_j+w_k$.  Therefore \eqref{eq:cntr-Poisson} implies that $\{x_j,x_k\} \in \wtng[w_j+w_k]$, so that $w_ix_j\{x_j,x_k\} \in \wtng[w_i+w_j+w_k]$, as desired.
\end{proof}

\subsection{Examples}

We now give several examples in which the conditions of \autoref{thm:poisson-blowup} can be described more explicitly.

\begin{example}[Unweighted blowups]\label{ex:unweighted}
For unweighted blowups, \autoref{thm:poisson-blowup} recovers a result of  Polishchuk~\cite[\S8]{Polishchuk1997}, as follows; see also \cite{Schuesler2025} for a more differential-geometric approach.

    Suppose that $\cntr$ is an unweighted centre supported on a smooth closed subvariety $\Z$ that is Poisson for $\ps$. Then the linearization of $\ps$ along $\Z$ defines a $\cO{\Z}$-linear Lie bracket on the conormal sheaf $\coN{\Z}$. It is straightforward to check that we have $\ord[\Z](\ps) \ge 0$ (i.e.~$\Z$ is conilpotent) if and only if the Lie bracket on $\coN{\Z}$ is abelian.  Meanwhile, $\ord(\eul \wedge \lt{\ps}) \ge 0$ (i.e.~$\Z$ is codegenerate) if and only if the Lie algebra is degenerate in the sense of \emph{op.~cit.}
\end{example}

\begin{example}[Surfaces]
    Let $\X = \bA^2$ with coordinates $x,y$.  Then a general Poisson structure on $\X$ has the form $\ps = f(x,y) \cvf{x}\wedge\cvf{y}$ where $f \in \cO{}(\X)$.  Let $\cntr=\rcntr$ be a reduced centre supported at a point $p \in \X$, with weight sequence $\bw$, so that $\gcd\bw =1$.  We have
    \[
    \ord[\cntr]{\ps} = \ord[\cntr](f) - \ord[\cntr](\cvf{x}\wedge\cvf{y}) =  \ord[\cntr](f) - \kappa_2
    \]
    where $\kappa_2 = w_1+w_2$ is the second weight sum.  Moreover, $\eul \wedge \lt\ps = 0$ for degree reasons.  Hence $\cntr$ is codegenerate if and only if $\ord[\cntr](f) \ge \kappa_2-1$, and $\cntr$ is conilpotent if and only if $\ord[\cntr](f) \ge \kappa_2$.

    In particular, if $\Y =\van{f}$ and $\cntr$ is unweighted, we have $\bw = (1,1)$ so that $\kappa_2 = 2$.  Hence the codegeneracy condition reduces to the condition $\ord[p](f) \ge 1$, i.e.~$p \in \Y$, and the conilpotence condition reduces to the condition that $\ord[p](f) \ge 2$, i.e.~$p$ is a singular point of $\Y$.
\end{example}

\begin{example}[Jacobian Poisson structures]\label{ex:jacobian} Recall that if $f(x,y,z)$ is a function on $\bA^3$, its \defn{Jacobian Poisson structure} is defined by
    \[
    \ps := [\cvf{x}\wedge\cvf{y}\wedge\cvf{z},f] = f_x \cvf{y}\wedge\cvf{z} + f_y \cvf{z}\wedge\cvf{x} + f_z \cvf{x}\wedge\cvf{y}
    \]
    Let $\eul $ be the Euler vector field of a centre $\cntr$ for which $\ord[\cntr](f) > 0$.  Then $\ord[\cntr](f) = \ord[\cntr]E(f)$, and we have
    \[
    \eul  \wedge \ps = E(f) \cvf{x}\wedge\cvf{y}\wedge\cvf{z}
    \]
    so that
    \[
    \ord[\cntr](\ps) = \ord[\cntr](\eul \wedge \ps) = \ord[\cntr](f) - \kappa_3.
    \]
    Thus $\cntr$ is conilpotent if and only if it is codegenerate, which in turn is equivalent to the condition $\ord[\cntr]{f} \ge \kappa_3$.
\end{example}

\begin{example}[\whitney]\label{ex:whitney-conilp}
    As a special case of the previous example, consider the Jacobian Poisson structure associated to the Whitney umbrella $W = x^2-y^2z$, which was studied in \autoref{ex:whitney-lt}.  The calculations there imply via \autoref{ex:jacobian} that the centre $(x^1,y^1,z^\infty)$ corresponding to the $z$-axis is conilpotent, but neither of the centres $(x^1,y^1,z^1)$ nor $(x^2,y^3,z^3)$ is even codegenerate.
\end{example}

\begin{example}[The product of a log symplectic surface and a line]\label{ex:curve-non-inflat}
Let $\X = \mathbb{A}^{3}$ with coordinates $(x,y,z)$.  Consider the linear Poisson structure
\[
\ps = x\cvf{x}\wedge\cvf{y},
\]
which is independent of $z$.  We claim that there does not exist any centre of dimension zero that is codegenerate for $\ps$.  In particular, there are no conilpotent centres of dimension zero. 

Indeed, suppose that $\cntr$ is a Poisson centre of dimension zero.  Then $\cntr$ is necessarily contained in the vanishing locus of $\ps$, which is the $yz$-plane $\W = \van{x}$.  By translation, we may assume without loss of generality that $\cntr$ is supported at the origin.

Let $\eul$ denote a weighted Euler vector for $\cntr$.  We have 
\[
\eul \wedge \ps = x \eul(z)\cvf{x}\wedge\cvf{y}\wedge\cvf{z},
\]
and therefore
\[
\ord[\cntr](\eul \wedge \ps) = \ord[\cntr](x) + \ord[\cntr](z) - \kappa_3 
\] 
where we have used that $\ord[\cntr](E(z)) = \ord[\cntr](z)$.  Now note that since $\dd x$ and $\dd z$ are linearly independent, the sum of their orders is at most $w_1+w_2=\kappa_2$.  Therefore
\[
\ord[\cntr](E\wedge \ps) \leq \kappa_2-\kappa_3 = - w_3 < 0
\]
where the final inequality follows from the fact that $\cntr$ has codimension three.  This shows that $\cntr$ is not codegenerate, as claimed.
\end{example}

\section{Weighted normal forms of Poisson structures}

In what follows, we shall need some results on formal normal forms for Poisson structures in weighted charts. If $p \in \X$ is a point in an orbifold, then its formal neighbourhood is described as the completion of the local ring $\cO{\X,p}$, which is isomorphic to an algebra of formal power series.  Hence we work throughout this section with the latter.

\subsection{Formal vector fields and automorphisms}
For a fixed integer $n \ge 0$, let us denote by
\[
\hO := \KK[[x_1,\ldots,x_n]]
\]
the ring of power series in $n$ variables, viewed as a complete local ring with maximal ideal
\[
\fm := (x_1,\ldots,x_n).
\]
We denote by
\[
\hT = \hO \cvf{x_1} \oplus\cdots \oplus \hO\cvf{x_n}
\]
the module of derivations (formal vector fields) and by
\[
\hder := \wedge^\bullet_{\hO} \hT
\]
the formal polyvectors.  

Note that $\fm\hT$ is the Lie algebra of formal vector fields ``vanishing at the origin'', and 
\[
\Aut{\hO} = \exp(\fm\hT)
\]
is the group of continuous automorphisms of $\hO$.  Taking the derivative (Jacobian matrix) at the origin gives a homomorphism from $\Aut{\hO}$ to $\mathsf{GL}_n(\KK)$, fitting into a short exact sequence
\[
\begin{tikzcd}
    0 \ar[r] & \Aut[0]{\hO} \ar[r] & \Aut{\hO} \ar[r] & \mathsf{GL}_n(\KK) \ar[r] &0
\end{tikzcd}
\]
where $\Aut{\hO}_0 = \exp(\fm^2\hT)$ is the subgroup of automorphisms that are \defn{tangent to the identity}.

\subsection{Formal centres} The basic constructions involving centres from \autoref{sec:blowups} work equally well for formal powers series.

\begin{definition}
    A \defn{formal centre} is a centre $\cntr$ on the formal spectrum $\mathsf{Spf}(\hO)$, i.e.~a filtration of $\hO$ by ideals,  defined by assigning weights to the variables $x_1,\ldots,x_n$. 
\end{definition}

Fix now a formal centre $\cntr$ defined by formal weighted coordinates $(x_1^{a_1},\ldots,x_n^{a_n})$, and let
\[
\Aut{\hO,\cntr} < \Aut{\hO}
\]
denote the subgroup of automorphisms that preserve $\cntr$ and act as the identity on the associated graded (the weighted normal bundle).  Equivalently, $\Aut{\hO,\cntr}$ is the subgroup obtained be exponentiating the action of the Lie subalgebra of $\fm\hT$ consisting of vector fields whose weighted order is strictly positive.

\begin{lemma}\label{lem:tangent-identity-weighted}
If $a_n < 2a_1$ (or equivalently $2w_n > w_1$), then every automorphism tangent to the identity lies in $\Aut{\hO,\cntr}$, i.e.
\[
\Aut[0]{\hO} < \Aut{\hO,\cntr}
\]
\end{lemma}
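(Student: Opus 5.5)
Since $\Aut[0]{\hO} = \exp(\fm^2\hT)$ and $\Aut{\hO,\cntr}$ is obtained by exponentiating the Lie subalgebra of $\fm\hT$ of vector fields of strictly positive weighted order, it suffices to show the inclusion of Lie algebras
\[
\fm^2\hT \subset \{\,v \in \fm\hT : \ord[\cntr](v) > 0\,\}.
\]
The exponentials of both sides are then the corresponding groups, and the inclusion of groups follows. One small point must be checked here: exponentiation converges and is compatible on both sides. Both are pro-unipotent groups. The first is filtered by $\fm$-adic order and the second by weighted order, and the exponential map of a positively filtered pro-nilpotent Lie algebra surjects onto the corresponding group. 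So the group statement reduces to the Lie algebra statement.

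The key step is a computation on monomial vector fields $x^J\cvf{x_i}$ with $|J| = j_1+\cdots+j_n \ge 2$. Implicitly all weights are positive here, since $w_n>0$ is forced by $2w_n > w_1 \ge 0$ together with the existence of positive weights. We have
\[
\ord[\cntr](x^J\cvf{x_i}) = \sum_l j_l w_l - w_i \ge |J|\, w_n - w_1 \ge 2w_n - w_1 > 0,
\]
using that $\bw$ is decreasing, so each $w_l \ge w_n$ and $w_i \le w_1$. Since $\ord[\cntr]$ of a formal vector field $\sum f_i\cvf{x_i}$ is the infimum over its monomial terms, and the possible orders form a discrete set of multiples of $\gcd\bw$, every element of $\fm^2\hT$ has order at least $2w_n - w_1 > 0$. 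This gives the inclusion, and the equivalence $a_n<2a_1 \iff 2w_n>w_1$ is just $w=1/a$.

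The only genuinely delicate point is the identification of $\Aut{\hO,\cntr}$ with the exponential of the positive-order subalgebra, but the paper states this as the definition. It then remains only to note that an automorphism $\exp(v)$ with $\ord v>0$ preserves each $\wtng[\lambda]$ and induces the identity on $\grwtng{\hO}$. This holds because $v$ maps $\wtng[\lambda]$ into $\wtng[\lambda+\ord v] \subset \wtng[>\lambda]$.
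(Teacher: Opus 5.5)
Your proof is correct and takes essentially the same route as the paper. Both show that $\fm^2\hT$ has weighted order at least $2w_n - w_1 > 0$: the paper combines $\mathrm{ord}(\fm) = w_n$ with $\mathrm{ord}(\hT) = -w_1$, while you do the equivalent computation on monomials $x^J\partial_{x_i}$ with $|J| \ge 2$, and both then conclude by exponentiation, your remarks on convergence and on the induced action on the associated graded spelling out what the paper leaves implicit.
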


\begin{proof}
    We have $\ord[\cntr](\fm) = \ord[\cntr](x_n) = w_n = \tfrac{1}{a_n}$ and $\ord(\hT) = -\kappa_1=-w_1=-\tfrac{1}{a_1}$.  Therefore $\ord[\cntr](\fm^2\hT) = 2w_n-w_1 = \frac{2}{a_n}-\frac{1}{a_1}> 0$ and hence the Lie algebra $\fm^2 \hT$ of $\Aut[0]{\hO}$ is contained in the Lie algebra of $\Aut{\hO,\cntr}$.  The result follows by exponentiation.
\end{proof}

\subsection{Normal forms from leading terms}\label{sec:normal-form-leading-term}
In this subsection we fix a formal centre $\cntr$ given by $(x_1^{a_1},x_2^{a_2},\ldots)$.  We will be interested in the following:

\begin{definition}
A \defn{formal Poisson structure} is an element $\ps \in \hder[2]$ such that $[\ps,\ps]=0$. 
    Two formal Poisson structures $\ps$ and $\ps'$ are \defn{equivalent} (relative to $\cntr$) if there exists an element $\phi \in \Aut{\hO,\cntr}$ such that $\phi \cdot \ps = \ps'$.
\end{definition}

Note that the action of $\Aut{\hO,\cntr}$ fixes the leading terms, i.e.~:
\[
\textrm{if }\ps \textrm{ and }\ps' \textrm{ are equivalent, then
}\lt{\ps} = \lt{\ps'}.
\]
Moreover, if we equip $\hO$ with the \emph{grading} defined by the weighted coordinates $(x_1^{a_1},\ldots,x_n^{a_n})$, rather than just the filtration, we obtain an identification $\hO \cong \gr \hO$, so that the leading term may be viewed as a formal Poisson structure on $\hO$ that is quasi-homogeneous with respect to the grading.  Hence to classify formal Poisson structures up to equivalence, we may fix a quasihomogeneous formal Poisson structure
\[
\ps_0 \in \hder[2]
\]
and look at Poisson structures of the form
\[
\ps = \ps_0  + \eta,
\]
where $\eta \in \hder[2]$ is a bivector of order $\ord[\cntr]{\eta} > \ord[\cntr]{\ps_0}$.  We can then view $\eta$ as a small perturbation of $\ps_0$, where ``small'' means ``of higher order in the filtration'', and try to simplify $\eta$ at higher and higher order in the filtration; see, e.g. \cite[\S2.2]{Dufour2005}.  

This simplification process can be packaged concisely using the formalism of deformation theory via differential graded Lie algebras, as follows.  We will assume some familiarity with this technique; see e.g.~\cite[\S5.6 and \S6.3]{Manetti2022} for a recent treatment of the relevant definitions.

As observed by Lichnerowicz~\cite{Lichnerowicz1977}, the operation $\delta := [\ps_0,-]$ defines a differential on $\hder$, which together with the Schouten bracket and the wedge product, makes $\der{\X}$ into differential graded (dg) Gerstenhaber algebra. In particular, the shift $\der{\X}[1]$ is a differential graded Lie algebra with respect to the Schouten bracket. Given a homogeneous formal Poisson structure $\ps_0$ and a formal centre $\cntr$ as above, we define a graded subspace
\[
\fg^\bullet = \fg^\bullet(\ps_0,\cntr) < \hder[\bullet] [1]
\]
concentrated in non-negative degrees, by taking
\[
\fg^j := \set{\xi \in \hder[j+1]}{\ord[\cntr](\xi) > j\, \ord[\cntr](\ps_0)} < \hder[j+1].
\]
In particular:
\begin{itemize}
    \item $\fg^0$ is the set of positive-weight vector fields, so that $\Aut{\hO,\cntr} = \exp(\fg^0)$. 
    \item $\fg^1$ is the space of bivector fields $\eta\in\hder[2]$ such $\ord[\cntr](\eta) > \ord[\cntr](\ps_0)$.
\end{itemize}
Since the Schouten bracket is homogeneous of order zero, $\fg^\bullet$ is preserved by differential and bracket, i.e.~the triple $(\fg^\bullet,\delta,[-,-])$ is a differential graded Lie subalgebra of $\der{\X}[1]$.
Note that $\fg$ is complete with respect to the filtration induced by the centre.

An element $\eta \in \fg^1$ satisfies the Maurer--Cartan equation $\delta \eta + \tfrac{1}{2} [\eta,\eta] = 0$ if and only if $\ps_0 + \eta$ is a Poisson structure.  Hence the Maurer--Cartan elements of the dg Lie algebra $\fg^\bullet$ are in bijection with formal Poisson structures whose leading term is $\ps_0$.  Meanwhile,  the gauge action of $\fg^0$ on Maurer--Cartan elements integrates to the natural action of $\Aut{\hO,\cntr}$ on Poisson structures.  We thus have the following
\begin{lemma}
    The set of gauge equivalence classes of Maurer--Cartan elements of $\fg^\bullet(\ps_0,\cntr)$ is in canonical bijection with the set of $\Aut{\hO,\cntr}$-orbits of formal Poisson structures whose leading term is $\ps_0$.
\end{lemma}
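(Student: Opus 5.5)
The plan is to build the bijection in two layers: first identify the Maurer--Cartan elements, then identify the gauge action with the action of $\Aut{\hO,\cntr}$.

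\textbf{Maurer--Cartan elements.} For $\eta \in \fg^1$, expand using $[\ps_0,\ps_0]=0$:
\[
[\ps_0+\eta,\ps_0+\eta] = 2\delta\eta + [\eta,\eta].
\]
So $\eta$ is Maurer--Cartan exactly when $\ps_0+\eta$ is Poisson. By the definition of $\fg^1$ we have $\ord[\cntr](\eta) > \ord[\cntr](\ps_0)$. Since $\ps_0$ is quasi-homogeneous, $\lt{\ps_0+\eta}=\ps_0$ after identifying $\gr\hO\cong\hO$ via the grading. Conversely, if $\ps$ has leading term $\ps_0$, then $\eta:=\ps-\ps_0$ has strictly larger order, so $\eta \in \fg^1$. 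This gives a bijection between Maurer--Cartan elements and formal Poisson structures with leading term $\ps_0$.

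\textbf{Gauge action.} For $X \in \fg^0$ (order $>0$), let $\phi=\exp(X)$ act on polyvectors by $\exp(\mathrm{ad}_X)$, i.e.\ by the Lie derivative. Then
\[
\exp(\mathrm{ad}_X)(\ps_0+\eta) = \ps_0 + \Bigl(\eta + \sum_{n\ge1}\tfrac{1}{n!}\mathrm{ad}_X^{n-1}\bigl([X,\eta]-\delta X\bigr)\Bigr),
\]
using $[X,\ps_0]=-\delta X$ up to sign convention. The bracketed term is precisely the standard gauge action formula $e^X * \eta$ in the dg Lie algebra $\fg^\bullet$. Convergence follows because $\mathrm{ad}_X$ raises order by $\ord X>0$, the order values lie in a discrete set, and $\fg$ is complete for the filtration. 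The new element stays in $\fg^1$: $[X,\eta]$ has order $>\ord\eta$, and $\delta X=[\ps_0,X]$ has order $\ge \ord\ps_0+\ord X > \ord\ps_0$.

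\textbf{Conclusion.} Since $\Aut{\hO,\cntr}=\exp(\fg^0)$ by definition, gauge orbits correspond to $\Aut{\hO,\cntr}$-orbits. The bijection is canonical because it is given by $\eta\mapsto\ps_0+\eta$.

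\textbf{Expected obstacle.} The step needing the most care is matching the algebraic $\exp(\mathrm{ad}_X)$ with the pushforward of polyvectors under the automorphism $\exp(X)$. I would justify this by checking it on functions, where $\phi=\exp(X)$ as a derivation exponential, and on vector fields, where it is conjugation. Both are derivations of the Gerstenhaber algebra, so the formula extends to all polyvectors. The finiteness needed is again supplied by completeness and positivity of $\ord X$.
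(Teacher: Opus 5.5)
Your proposal is correct and follows the same route as the paper. The paper notes that $\eta\mapsto\ps_0+\eta$ identifies Maurer--Cartan elements of $\fg^\bullet$ with Poisson structures having leading term $\ps_0$, and that the gauge action of $\fg^0$ integrates to the action of $\Aut{\hO,\cntr}=\exp(\fg^0)$. You supply the details it leaves implicit: the expansion of $[\ps_0+\eta,\ps_0+\eta]$, the gauge formula, convergence, and the order bookkeeping showing the action preserves $\ps_0+\fg^1$.

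One small wording fix in your last paragraph: $\mathrm{ad}_X$ is a derivation of the Gerstenhaber algebra, so $\exp(\mathrm{ad}_X)$ is an automorphism. The pushforward by $\exp(X)$ is also an automorphism of the wedge algebra. Since the two agree on the generators $\hO$ and $\hT$, they coincide.
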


We will produce normal forms of formal Poisson structures by identifying subspaces of $\fg^\bullet = \fg^\bullet(\ps_0,\cntr)$ that generate all of the gauge equivalence classes.  To this end, we will apply the following fundamental result of derived deformation theory from \cite[\S3.2]{Dolgushev2015a}; see \cite[Theorem 6.4.4]{Manetti2022} for a variant in the language of deformation functors. 

\begin{theorem}[{\cite[\S3.2]{Dolgushev2015a}}]\label{prop:MC-simplify} 
    Let $\phi : \fh \to \fg$ be a morphism of complete filtered differential graded Lie algebras, and let $\gr \phi : \gr \fh \to \gr \fg$ be the induced map on the associated graded complexes.  If $\coH[1]{\gr \phi} : \coH[1]{\gr \fh}\to\coH[1]{\gr \fg}$ is surjective, and $\coH[2]{\gr\phi} : \coH[2]{\gr\fh}\to\coH[2]{\gr\fg}$ is injective, then every Maurer--Cartan element of $\fg$ is equivalent to one lying in the image of $\phi$.
\end{theorem}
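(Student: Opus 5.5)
The plan is the standard obstruction-theoretic argument: build the gauge equivalence one filtration level at a time, each time using the two cohomological hypotheses on $\gr\phi$, and then pass to the limit using completeness. Write $F^p\fg$ and $F^p\fh$ for the filtrations; $\phi$ preserves them. Let $\eta\in\fg^1$ be Maurer--Cartan. We inductively construct gauge elements $\gamma_p\in F^p\fg^0$ and Maurer--Cartan elements $\beta_p\in\fh^1$ such that
\[
\eta_p := e^{\gamma_p}\cdots e^{\gamma_1}\cdot\eta \qquad\textrm{satisfies}\qquad \eta_p - \phi(\beta_p)\in F^{p+1}\fg^1 .
\]
Completeness of $\fg$ makes the infinite composite of gauge transformations converge, since $\gamma_p\in F^p$. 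Completeness of $\fh$ makes the sequence $\beta_p$ converge, provided we also arrange $\beta_{p+1}-\beta_p\in F^{p+1}\fh$. The limits then give $e^{\gamma}\cdot\eta=\phi(\beta)$ with $\beta$ Maurer--Cartan, which is the statement.

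For the inductive step, suppose $\eta_p=\phi(\beta_p)+r$ with $r\in F^{p+1}\fg^1$. Both $\eta_p$ and $\phi(\beta_p)$ are Maurer--Cartan. Subtracting their Maurer--Cartan equations gives
\[
\delta r + [\phi(\beta_p),r] + \tfrac12[r,r]=0 .
\]
The bracket terms lie in $F^{p+2}$ as long as $\beta_p\in F^1$; this is where we need the filtrations to start at level one in degree $\ge1$, as holds for $\fg(\ps_0,\cntr)$. So the class $\bar r\in\gr^{p+1}\fg^1$ is a cocycle.

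By surjectivity of $\coH[1]{\gr\phi}$, we can write $\bar r=\gr\phi(\bar s)+\delta\bar c$ with $\bar s\in\gr^{p+1}\fh^1$ a cocycle and $\bar c\in\gr^{p+1}\fg^0$. Lift $\bar c$ to $\gamma_{p+1}\in F^{p+1}\fg^0$. Gauging by $-\gamma_{p+1}$, with the sign chosen so that the gauge action subtracts $\delta\gamma_{p+1}$ at leading order, removes $\delta\bar c$ modulo $F^{p+2}$. We then want to replace $\beta_p$ by $\beta_p+s$ for a lift $s$ of $\bar s$.

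The main obstacle is that $\beta_p+s$ need not be Maurer--Cartan. We must correct it in $\fh$, using only the injectivity of $\coH[2]{\gr\phi}$. I would treat this by a nested induction on higher filtration levels. The curvature $R=\delta(\beta_p+s)+\tfrac12[\beta_p+s,\beta_p+s]$ lies in $F^{p+2}\fh^2$, and its leading class $\bar R$ is a $\gr$-cocycle by the Bianchi identity.

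Its image $\gr\phi(\bar R)$ is exact in $\gr\fg$. The reason is that $\phi(\beta_p+s)$ agrees with the Maurer--Cartan element $\eta_{p+1}$ up to an error $r'\in F^{p+2}$. So $\phi(R)$ equals the linearized curvature $\delta r'$ plus terms of higher filtration.

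Injectivity then gives $\bar R=\delta\bar u$ for some $\bar u\in\gr^{p+2}\fh^1$. Replacing $s$ by $s-u$ pushes the curvature into $F^{p+3}$. This changes $\phi(\beta)$ only in $F^{p+2}$, which is harmless for the outer step because that discrepancy is absorbed at the next stage. Iterating and using completeness of $\fh$ produces a genuine Maurer--Cartan element $\beta_{p+1}\equiv\beta_p+s \pmod{F^{p+2}}$.

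The delicate point in all of this is bookkeeping. The outer induction should be formulated so that at stage $p$ we only demand agreement modulo $F^{p+1}$, while $\beta_p$ is required to be honestly Maurer--Cartan. Then the injectivity argument and the surjectivity argument interleave consistently.

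Alternatively, one could quote Dolgushev's filtered Goldman--Millson argument directly. The above is the self-contained route I would write out.
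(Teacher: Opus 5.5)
\textbf{There is a genuine gap, in the inner correction loop.} Your outer step is correct: $\bar r$ is a $\gr$-cocycle, you split it using surjectivity of $\coH[1]{\gr\phi}$, and the first correction of the curvature $R$ of $\beta_p+s$ via injectivity of $\coH[2]{\gr\phi}$ is valid. What fails is the claim that ``iterating'' this correction inside $\fh$, using only injectivity, produces an honest Maurer--Cartan element $\beta_{p+1}$.

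Exactness of $\gr\phi(\bar R)$ came from $\phi(R)\equiv-\delta r' \pmod{F^{p+3}}$. That congruence holds only because the curvature $R$ and the discrepancy $r'=\eta_{p+1}-\phi(\beta_p+s)$ both start at level $p+2$. After you replace $s$ by $s-u$, the curvature $R'$ lies in $F^{p+3}$, but the discrepancy $r''=r'+\phi(u)$ is still only in $F^{p+2}$. Then
\[
\phi(R') = -\delta r'' - [\eta_{p+1},r''] + \tfrac12[r'',r''].
\]
Its class in $\gr^{p+3}\fg^2$ contains the term $[\bar\eta_1,\bar r'']$, the leading part of $\eta_{p+1}$ bracketed with the level-$(p+2)$ class of $r''$. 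This is a cocycle, but in general it is not a $\gr$-coboundary, so injectivity cannot be invoked.

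Equivalently, $\bar u$ is determined only up to a $\gr$-cocycle $\bar v$. Different choices shift the level-$(p+3)$ obstruction by classes such as $[\bar\beta_1,\bar v]$, where $\bar\beta_1$ is the leading part of $\beta_p+s$, and these need not vanish. A good choice of $u$ exists only if $\bar r''$ can be made exact in $\gr\fg$. That requires surjectivity of $\coH[1]{\gr\phi}$, and in effect a gauge transformation in $\fg$, again at level $p+2$. So the inner loop cannot be closed with injectivity alone. Your closing remark that insisting on an honest Maurer--Cartan $\beta_p$ makes the two arguments ``interleave consistently'' is precisely the bookkeeping that breaks.

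\textbf{The fix.} Drop that requirement and run a single induction. At stage $k$, ask only that $\beta\in\fh^1$ be Maurer--Cartan modulo $F^k$, i.e.\ $\delta\beta+\tfrac12[\beta,\beta]\in F^k\fh^2$, and that $\eta_k-\phi(\beta)\in F^k\fg^1$. At level $k$ do two things.
\begin{enumerate}
\item Kill the curvature class first. It is a $\gr$-cocycle by Bianchi. Since curvature and discrepancy $d$ now sit at the same level, $\phi$ of the curvature is $-\delta d$ modulo $F^{k+1}$. Injectivity therefore gives $u\in F^k\fh^1$ with curvature of $\beta-u$ in $F^{k+1}$.
\item Then the discrepancy class is a $\gr$-cocycle. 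Surjectivity gives a gauge transformation by an element of $F^k\fg^0$ and a correction $s\in F^k\fh^1$, with $\bar s$ a cocycle. These push the discrepancy into $F^{k+1}$ while keeping the curvature in $F^{k+1}$.
\end{enumerate}
All corrections lie in $F^k$, so completeness yields a genuine Maurer--Cartan limit $\beta$ and a convergent gauge equivalence $\eta\sim\phi(\beta)$. This alternation at every level is exactly how the Dolgushev--Rogers argument cited in the paper works, in the tower $\fh/F^n\fh$. It is why the paper's remark locates surjectivity of $\coH[1]{\gr\phi}$ and injectivity of $\coH[2]{\gr\phi}$ in two separate steps of that proof.
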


\begin{remark}
The version of \autoref{prop:MC-simplify} stated explicitly in \cite[\S3.2]{Dolgushev2015a} has a stronger hypothesis, namely that $\phi$ is a filtered quasi-isomorphism, but an inspection of the proof reveals that it uses only the weaker hypotheses stated here. In particular, the surjectivity of $\coH[1]{\gr \phi} : \coH[1]{\gr \fh}\to\coH[1]{\gr \fg}$ is used to justify \cite[Eq. (3.62), (3.63)]{Dolgushev2015a}, and injectivity of $\coH[2]{\gr\phi} : \coH[2]{\gr\fh}\to\coH[2]{\gr\fg}$ is used in the last step of the proof of \cite[Claim 3.5]{Dolgushev2015a}. Note that the grading used in their paper is shifted by $-1$ from the one in use here.
\end{remark}

\subsection{Some normal forms in dimension three}
We now use this formalism to produce some normal forms in dimension $n=3$, i.e. for formal Poisson structures on $\KK[[x,y,z]]$.  These results will be used to establish our results on resolution of singularities later in the paper.

The first normal form concerns Poisson structures with certain specified linearizations; this corresponds to the case in which the centre $\cntr$ has all weights equal to one, and the leading term has order $-1$.  We make use of the calculation of the Poisson cohomology of three-dimensional linear Poisson structures in \cite[Theorem 2.1]{Hoekstra2023}.

\begin{proposition}\label{prop:3d-poisson-germs}
Let $\ps$ be a formal Poisson structure on $\KK[[x,y,z]]$ and let $\cntr$ be the unweighted centre defined by $(x,y,z)$.  Let $\ps_0 = \lt{\ps}$ be the leading term.
\begin{enumerate}
\item If
\[
\ps_0 = x\cvf{x}\wedge\cvf{y},
\]
then either $\ps = \ps_0$, or $\ps$ is equivalent to
\[
\rbrac{x\cvf{x} + \frac{z^{k+1}}{1+\lambda z^k}\cvf{z}}\wedge\cvf{y}
\]
for some $\lambda \in \KK$ and $k > 0$.
\item\label{it:Heis-normal} If 
\[
\ps_0 = x \cvf{y}\wedge\cvf{z}
\]
then there exists a series $f \in \KK[[y,z]]$ and series
\[
A(f) \in f \KK[[f]] \cap(y,z)^2 \qquad B(f) \in f\KK[[f]] \cap (y,z)^3
\]
such that  $\ps$ is equivalent to 
\[
(x + A(f)) \cvf{y}\wedge\cvf{z} +[ \cvf{x}\wedge \cvf{y}\wedge\cvf{z},B(f)].
\]
\end{enumerate}
Moreover, the equivalences in (1) and (2) can be given by automorphisms tangent to the identity at the origin.
\end{proposition}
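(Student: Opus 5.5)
We apply \autoref{prop:MC-simplify} to the dg Lie algebra $\fg = \fg^\bullet(\ps_0,\cntr)$, where $\cntr$ is the unweighted centre at the origin. Since $\ps_0$ is linear, $\ord[\cntr]\ps_0 = -1$. Hence $\fg^j$ consists of $(j+1)$-vectors of order $> -j$, i.e.\ of polynomial degree at least $j+1$ in the coefficients. The associated graded $\gr\fg$ is the direct sum, over degrees $d$, of the weight-$d$ pieces of the polynomial Lichnerowicz--Poisson complex of the linear structure $\ps_0$ (the Chevalley--Eilenberg complex of the corresponding three-dimensional Lie algebra with coefficients in $\mathrm{Sym}$), truncated below by the order condition. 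Its cohomology can be read off from Hoekstra--Zeiser \cite[Theorem 2.1]{Hoekstra2023}.

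In each case we take $\fh$ to be a small dg Lie subalgebra, or more generally the image of a dg Lie map, spanned by the candidate normal forms together with whatever is needed to close it up. We then check two things: that $\coH[1]{\gr\fh}\to\coH[1]{\gr\fg}$ is onto, and that $\coH[2]{\gr\fh}\to\coH[2]{\gr\fg}$ is injective.

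\emph{Case (1).} Here $\ps_0 = x\cvf{x}\wedge\cvf{y}$ is the product of the two-dimensional non-abelian Lie algebra with a line; $z$ is a Casimir and $x$ is log-Casimir-like. The Hoekstra--Zeiser computation should show that the positive-degree part of $\coH[2]$ of $\ps_0$ is represented by the classes $z^{m}\cvf{z}\wedge\cvf{y}$, $m\ge 1$, and that these are exactly the classes hit by bivectors $g(z)\cvf{z}\wedge\cvf{y}$. We therefore take
\[
\fh = \set{ g(z)\cvf{z}\wedge\cvf{y} + (\text{suitable }0\text{-cochains})}{g\in z^2\KK[[z]]},
\]
which is abelian under the Schouten bracket because everything depends only on $z$ and involves only $\cvf{y},\cvf{z}$. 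Surjectivity on $\coH[1]$ then comes from that cohomology computation, and injectivity on $\coH[2]$ should be automatic once we check that $\coH[2]{\gr\fh}$ vanishes or embeds. This gives the normal form $\ps = (x\cvf{x} + g(z)\cvf{z})\wedge\cvf{y}$.

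It remains to reduce $g$. Suppose $g \neq 0$, with leading term $c z^{k+1}$. We first rescale $z$ to make $c = 1$. Then we use the residual symmetries: diffeomorphisms $z\mapsto \phi(z)$ and shifts $y\mapsto y + h(z)$ preserve the form of $\ps$ and act on $g(z)\cvf{z}$ as pushforward of a one-dimensional vector field. The classical formal normal form for one-dimensional vector fields $z^{k+1}/(1+\lambda z^k)\,\cvf{z}$ then finishes the case. All automorphisms used here are tangent to the identity, apart from the scaling of $z$; that scaling is absorbed by adjusting the normalization, and we will need to check this carefully.

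\emph{Case (2).} Here $\ps_0 = x\cvf{y}\wedge\cvf{z}$ is the Heisenberg Lie algebra, and the Casimirs of the linear structure are generated by $x$. Hoekstra--Zeiser should give that $\coH[2]$ in positive degree is spanned by classes of the form $a(x)\cvf{y}\wedge\cvf{z}$ and $[\cvf{x}\wedge\cvf{y}\wedge\cvf{z}, b]$ for suitable $b$. We would take $\fh$ to be the image of the family
\[
(A,B)\mapsto A\cvf{y}\wedge\cvf{z} + [\cvf{x}\wedge\cvf{y}\wedge\cvf{z},B],
\]
with $A,B$ functions of a single series $f$, and then aim to show that the relevant classes are realized this way.

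\emph{Main obstacle.} The hardest step is case (2): matching the Hoekstra--Zeiser description of the nontrivial degree $1$ and $2$ cohomology of the Heisenberg structure to the specific forms $A(f)$, $B(f)$ with $f\in\KK[[y,z]]$. In particular we must explain where the series $f$ comes from; it should be extracted from the first nonvanishing higher-order obstruction term. We must also verify the $\coH[2]$-injectivity hypothesis, and check the order conditions $A\in(y,z)^2$ and $B\in(y,z)^3$. Finally, the last claim, that the equivalences can be taken tangent to the identity, needs a separate check: $\exp(\fg^0)$ is exactly the group of positive-order vector fields for the unweighted centre, which is $\fm^2\hT$, so all gauge transformations are tangent to the identity.
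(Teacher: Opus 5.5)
\textbf{The gap is in case (2), and it has three parts.}

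First, your guess for the cohomology is wrong. Bivectors $a(x)\cvf{y}\wedge\cvf{z}$ are exact in $\gr\fg$: for $k\ge 2$, $x^k\cvf{y}\wedge\cvf{z}=\pm\delta(x^{k-1}y\,\cvf{y})$, with $x^{k-1}y\,\cvf{y}\in\fm^2\hT$. The classes that survive are $a(y,z)\cvf{y}\wedge\cvf{z}$ with $a\in\fu^2$, and $[\mu,b]$ with $b\in\fu^3$, where $\mu:=\cvf{x}\wedge\cvf{y}\wedge\cvf{z}$ and $\fu^j=(y,z)^j\KK[[y,z]]$. In the next degree the classes are $\fu^2\mu$.

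Second, a subalgebra built from functions of a single series $f$ cannot be fed into \autoref{prop:MC-simplify}. The series $f$ depends on $\ps$ and is not known beforehand, and the union over all $f$ is not a linear subspace. For any fixed $f$, the image of $\coH[1]{\gr\fh}\to\coH[1]{\gr\fg}$ consists only of the classes of $A(f)\cvf{y}\wedge\cvf{z}+[\mu,B(f)]$, so surjectivity fails. The paper instead uses the \emph{linear} subalgebra $\fh^1=\fu^2\cvf{y}\wedge\cvf{z}\oplus[\mu,\fu^3]$, $\fh^2=\fu^2\mu$, with zero differential. The trivector part is forced, because $[g\cvf{y}\wedge\cvf{z},[\mu,h]]=\pm(g_yh_z-g_zh_y)\mu$. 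Hoekstra--Zeiser then gives that $\gr\fh$ maps isomorphically onto the cohomology of $\gr\fg$ in degrees one and two. This is exactly what settles the degree-two injectivity hypothesis you left open.

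Third, $f$ does not come from any obstruction term. After the reduction you have $\ps=(x+g)\cvf{y}\wedge\cvf{z}+[\mu,h]$ with \emph{arbitrary} $g\in\fu^2$ and $h\in\fu^3$. The equation $[\ps,\ps]=0$ reduces precisely to $\dd g\wedge\dd h=0$. The Mattei--Moussu factorization theorem then produces $f\in\KK[[y,z]]$ with $g=A(f)$ and $h=B(f)$, where $A,B\in f\KK[[f]]$. The order conditions $A(f)\in(y,z)^2$ and $B(f)\in(y,z)^3$ are inherited from $g\in\fu^2$ and $h\in\fu^3$. This step changes no coordinates, so the tangent-to-identity claim in case (2) does follow from $\fg^0=\fm^2\hT$, as you say. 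Without the integrability computation and the factorization theorem, though, your plan does not reach the stated normal form.

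\textbf{Case (1) is essentially the paper's argument, with some corrections.} The paper gets the cohomology $\KK[[z]]\abrac{\cvf{y},\cvf{z}}$ from K\"unneth and the planar computation, not from Hoekstra--Zeiser. Your abelian choice $\fh=z^2\KK[[z]]\,\cvf{z}\wedge\cvf{y}$, concentrated in degree one, also works, and no $0$-cochains are needed.

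Note that $\fg^j$ consists of $(j+1)$-vectors with coefficients in $\fm^2$ for every $j$, not in $\fm^{j+1}$. So the relevant classes are $z^m\cvf{z}\wedge\cvf{y}$ with $m\ge 2$.

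More importantly, the rescaling of $z$ cannot be ``absorbed''. The $\cvf{y}\wedge\cvf{z}$-component of every $\delta V$ is divisible by $x$. Hence, for $k=1$, the degree-two part $cz^2\cvf{z}\wedge\cvf{y}$ of $\ps-\ps_0$ is a nonzero cohomology class that automorphisms tangent to the identity cannot change. A coefficient $c\neq 1$ can therefore only be normalized by a linear rescaling of $z$. In case (1), the tangent-to-identity claim really holds for the intermediate form $(x\cvf{x}+g(z)\cvf{z})\wedge\cvf{y}$, not the final one. The paper's proof, whose last step is a change of coordinates in $z$ alone, does not address this either.
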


\begin{proof} \textbf{Case one:}  Suppose that $\ps_0 =x\cvf{x}\wedge\cvf{y}$.  This is the product of the $z$-axis with the zero Poisson structure, whose Poisson cohomology is $\KK[[z]]\abrac{\cvf{z}}$, and the $(x,y)$-plane with structure induced by $\ps_0$,  whose Poisson cohomology is well known to be given by $\KK\abrac{\cvf{y}}$, where $\abrac{-}$ denote the exterior algebra with the given generator (see, e.g., \cite[Example 2.5.15]{Dufour2005}). By the K\"unneth formula, the natural map
\[
(\KK[[z]]\abrac{\cvf{y},\cvf{z}},\dd=0) \to (\hder,\delta)
\]
is a quasi-isomorphism; it evidently respects the Schouten bracket and the grading by the unweighted order.   Hence its intersection with $\fg(\ps_0,\cntr)$ defines a dg Lie subalgebra $\fh^\bullet := (z^2\KK[[z]]\abrac{\cvf{y},\cvf{z}})[1] < \fg(\ps_0,\cntr)$,  to which we may apply \autoref{prop:MC-simplify}.  We deduce that every Maurer--Cartan element is gauge equivalent to one of the form $g(z)\cvf{z}\wedge\cvf{y}$ for some $g \in z^2\KK[[z]]$; put differently, $\ps$ is equivalent to $x\cvf{x}\wedge\cvf{y}+g(z) \cvf{z}\wedge\cvf{y}$. But it is well known (see e.g.\ \cite[Corollary 2]{Brickman1977} or \cite[Theorem 1.1(c)]{Garijo2004}) that by a formal change of coordinates involving $z$ only, any nonzero one-dimensional vector field $g(z)\cvf{z}$ vanishing to order at least two at the origin can be reduced to the normal form $\frac{ z^{k+1}}{1+\lambda z^k}\cvf{z}$ for some $k > 0$ and $\lambda \in \KK$, giving the desired result. 

\textbf{Case two:} Suppose $\ps_0 = x\cvf{y}\wedge\cvf{z}$.  Let $\fu^j = (y,z)^j\KK[[y,z]]$ be the ideal of functions vanishing to order $j$ at the origin in $y,z$, and define subspaces
\[
\fh^1 := \fu^2 \cvf{y}\wedge\cvf{z} \oplus [\cvf{x}\wedge\cvf{y}\wedge\cvf{z},\fu^3]  \subset \fg^1
\]
and
\[
\fh^2 := \fu^2 \cvf{x}\wedge\cvf{y}\wedge\cvf{z} \subset \fg^2
\]
and let $\fh^j=0$ for $j \neq 1,2$. 
We claim that $\fh^\bullet\subset \fg^\bullet$ is a dg Lie subalgebra with trivial differential, and that $\gr \fh^\bullet$ projects isomorphically to the cohomology of $\gr \fg^\bullet$ in degrees $1$ and $2$. Indeed, this follows from \cite[Theorem 2.1]{Hoekstra2023}, with the following small modifications: 1) our coordinates differ from those in \emph{op.\ cit.} by the cyclic permutation $(x,y,z)\mapsto (z,x,y)$; 2) we use different (but cohomologous) formulae for the 2-cocycles; 3) we work with formal power series instead of smooth functions; 4) we only consider cocycles that lie in the subspace $\fg \subset \hder$ picked out by the order conditions defining $\fg$; and 5) we observe that the isomorphism is compatible with gradings.  The calculations giving the proof are the same. 

By \autoref{prop:MC-simplify}, we may assume without loss of generality that $\ps=\ps_0+\eta$ where $\eta \in \fh^1$, i.e.
\[
\ps = (x+g) \cvf{y}\wedge\cvf{z} + [\cvf{x}\wedge\cvf{y}\wedge\cvf{z},h]
\]
where $g \in \fu^2$ and $h \in \fu^3$.  Moreover, the integrability condition $[\ps,\ps]=0$ is easily seen to be equivalent to the condition that $\dd g \wedge \dd h =0$.  It then follows from \cite[p.\ 472, Th\'eor\`eme de factorisation]{Mattei1980} that there exists a function $f \in \KK[[y,z]]$ such that $g=A(f)$ and $h=B(f)$ for some functions $A,B\in f\KK[[f]]$, as desired.
\end{proof}

Our next result continues our study of the Whitney umbrella from Examples \ref{ex:whitney-lt} and \ref{ex:whitney-conilp}, and makes use of all three of the  weightings we considered there at different stages of the proof.

\begin{proposition}[\whitney]\label{prop:whitney-normal-form}
    Let $\cntr$ be the centre defined by the weighted coordinates $(x^2,y^3,z^3)$, and let $W = x^2-y^2z$ be the equation for the Whitney umbrella.  Let $\ps$ be a formal Poisson structure with the following properties
    \begin{enumerate}
        \item The leading term $\lt[\cntr]{\ps}$ is equal to the Jacobian structure
    \[
    \ps_0 := [\cvf{x}\wedge\cvf{y}\wedge\cvf{z},W] = 2x\cvf{y}\wedge\cvf{z} - 2yz\cvf{z}\wedge\cvf{x}-y^2\cvf{x}\wedge\cvf{y}.
    \]
    \item The vanishing locus $\van{\ps}$ is non-isolated.
    \end{enumerate}
    Then there exists a continuous automorphism of $\KK[[x,y,z]]$ taking $\ps$ to a Poisson structure of the form
    \[
    u\cdot(\ps_0 + WA(W)\cvf{y}\wedge\cvf{z})
    \]
    for some $A \in \KK[[W]]$ and some invertible element $u \in \KK[[x,y,z]]^\times$.
\end{proposition}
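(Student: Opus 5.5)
The plan is to reduce to the Heisenberg-type normal form of \autoref{prop:3d-poisson-germs}(\ref{it:Heis-normal}). We then use hypothesis (2) to pin down the function $f$ appearing there, and finally extract the unit $u$ by a Casimir/rescaling argument.

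\textbf{Step 1: pass to the unweighted centre.} With respect to the unweighted centre $(x,y,z)$, the leading term of $\ps$ is the linear part. From \autoref{ex:whitney-lt}, the $(x^2,y^3,z^3)$-leading term $\ps_0$ has linear part $2x\cvf{y}\wedge\cvf{z}$. All other terms of $\ps$ have higher $(2,3,3)$-weighted order, hence cannot contribute linear terms that are not already in $\ps_0$. Indeed, a linear monomial $x_i\cvf{x_j}\wedge\cvf{x_k}$ of order $>-\tfrac16$ must still be checked: I expect only $x\cvf{y}\wedge\cvf{z}$ and terms like $y\cvf{x}\wedge\cvf{z}$ to be possible. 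I will handle the latter by a linear change of coordinates, or show they are excluded by the leading term condition. After rescaling $x$, \autoref{prop:3d-poisson-germs}(\ref{it:Heis-normal}) applies via an automorphism tangent to the identity. We get
\[
\ps \sim (x+A(f))\cvf{y}\wedge\cvf{z} + [\cvf{x}\wedge\cvf{y}\wedge\cvf{z},B(f)],\qquad f\in\KK[[y,z]].
\]
By \autoref{lem:tangent-identity-weighted}, since $2w_n=\tfrac23>\tfrac12=w_1$, this automorphism lies in $\Aut{\hO,\cntr}$. So the $(2,3,3)$-leading term is still $\ps_0$. Comparing leading terms then forces the $(2,3,3)$-leading part of $B(f)$ to be $-y^2z$ up to the normalization of $x$, and forces $\ord[\cntr]A(f)>\tfrac12$.

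\textbf{Step 2: use non-isolatedness.} Since $B$ has no $x$-dependence, the zero locus of $\ps$ is
\[
\{x=-A(f),\ B'(f)f_y=B'(f)f_z=0\}.
\]
We may take $f$ primitive (not a proper power) and $B'(0)\neq 0$, the latter because $B$ has nonzero linear part in $f$, its leading term being $-y^2z$. So this locus is non-isolated iff $\{f_y=f_z=0\}$ contains a curve in the $(y,z)$-plane. That means $f$ has a repeated factor, so $f=\ell^2m$ for some series $\ell,m$. Since $f$ has leading form proportional to $y^2z$, we may choose $\ell,m$ with linear parts $y,z$. After a coordinate change in $(y,z)$ alone, $f=y^2z$ up to a constant. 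The Jacobian of this coordinate change is then absorbed into a unit multiplying $\cvf{y}\wedge\cvf{z}$.

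\textbf{Step 3: extract the unit.} Write $\mu=\cvf{x}\wedge\cvf{y}\wedge\cvf{z}$ and $B(f)=\beta(f)$ with $\beta'$ a unit. Factor
\[
\ps=\beta'(f)\Big([\mu,f] + \tfrac{x+A(f)}{\beta'(f)}\cvf{y}\wedge\cvf{z}\Big).
\]
Substituting $x'=\tfrac{x+A(f)}{\beta'(f)}$, or rather a rescaled version making $[\mu,x'^2]$ appear, should bring the bracket to the form $\ps_0+W A(W)\cvf{y}\wedge\cvf{z}$ with $W=x'^2-y^2z$. The Jacobian determinant of the substitution gets absorbed into $u$, and we use that $W$ is a Casimir of $\ps_0$.

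The main obstacle is this last bookkeeping. The substitution in $x$ changes $\mu$ by the factor $\partial x'/\partial x$, and it changes $f$ into a function of $x'$ only implicitly. One must check that the residual discrepancy is exactly a multiple of $\cvf{y}\wedge\cvf{z}$ by a series in $W$ vanishing at $W=0$. If the direct substitution fails, I would instead redo the deformation argument of \autoref{sec:normal-form-leading-term} with the weighted centre $(x^2,y^3,z^3)$. That requires computing $\coH[1]{\gr\fg}$ and $\coH[2]{\gr\fg}$ for $\ps_0$ in the spirit of Pichereau, and checking that the classes not of the form $W^k\ps_0$ or $W^k\cvf{y}\wedge\cvf{z}$ produce isolated zeros. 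Those classes would then be excluded by hypothesis (2).
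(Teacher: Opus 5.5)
Your Steps 1 and 2 follow the first half of the paper's proof and are essentially correct, with two small fixes. First, no linear monomial other than $x\cvf{y}\wedge\cvf{z}$ has $\cntr$-order $\ge -\tfrac16$ (e.g.\ $y\cvf{x}\wedge\cvf{z}$ has order $-\tfrac12$), so the linearization is exactly $2x\cvf{y}\wedge\cvf{z}$ and there is nothing to remove. Second, the Jacobian of the $(y,z)$-coordinate change multiplies the whole bivector, because $\mu=\cvf{x}\wedge\cvf{y}\wedge\cvf{z}$ itself rescales; this is exactly where $u$ comes from.

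\textbf{The gap is Step 3, which is the real content of the proposition.} After Step 2 (normalizing $B(f)=f$, $f=-y^2z$) you have, up to a unit, $\ps=(2x+A(f))\cvf{y}\wedge\cvf{z}+[\mu,f]$. You must turn $A(f)\cvf{y}\wedge\cvf{z}$ into $WA(W)\cvf{y}\wedge\cvf{z}$, and no argument is given for this. The substitution you suggest does not do it. With $x'=x+\tfrac12A(f)$ one finds $\ps=[\mu',W']-x'A'(f)[\mu',f]$, where $\mu'$ and $W'=x'^2-y^2z$ are taken in the new coordinates. Equivalently, $\ps=(1-x'A'(f))\bigl(\ps_0'+R\,\cvf{y}\wedge\cvf{z}\bigr)$ with $R=2x'^2A'(f)/(1-x'A'(f))$. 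This $R$ is not a series in $W'$: for $A(f)=f$ it contains $x'^3$, while every series in $W'$ is even in $x'$. The fallback does not fill the hole either. Pichereau's results concern weight-homogeneous polynomials with isolated singularities, whereas $W$ is singular along the $z$-axis. Moreover, \autoref{prop:MC-simplify} needs $\fh$ to surject onto all of $\coH[1]{\gr\fg}$, so you cannot discard the remaining classes simply by invoking hypothesis (2).

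\textbf{How the gap can be closed.} The paper uses a second deformation argument, relative to the unweighted centre $(x^1,y^1,z^\infty)$. After reducing to $u=1$, the perturbation is a Maurer--Cartan element of the small dg Lie subalgebra $\tilde\fg^0=\mathfrak{r}\,z\cvf{z}$, $\tilde\fg^1=\mathfrak{r}\,\cvf{y}\wedge\cvf{z}$, where $\mathfrak{r}=(x,y^2z)\subset\KK[[x,y^2z]]$. The formula $\delta(x^k(y^2z)^lz\cvf{z})=(kx^{k-1}(y^2z)^{l+1}+2(l+1)x^{k+1}(y^2z)^l)\cvf{y}\wedge\cvf{z}$ shows that $\coH[1]{\tilde\fg}$ is spanned by the classes of $W^l\cvf{y}\wedge\cvf{z}$. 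Hence \autoref{prop:MC-simplify} applies with $\fh=W\KK[[W]]\cvf{y}\wedge\cvf{z}$.

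Alternatively, your Casimir intuition can be made to work directly. Up to the unit from Step 2, $\ps=\hook{\alpha}\mu$, where $\alpha$ is the pullback of $\dd G+(2X+A(G))\,\dd X$ along $(X,G)=(x,f)$. This nonvanishing $1$-form in two variables has a formal first integral $F$ with $F(0,G)=G$, so $\alpha=v\,\dd F$ with $v$ a unit. One checks that $F=X^2e(X)+G\,k(X,G)$ with $e(0)=k(0,0)=1$. The coordinate change $\tilde x=x\sqrt{e(x)}$, $\tilde z=z\,k(x,-y^2z)$ then gives $F=\tilde x^2-y^2\tilde z$. So $\ps$ is a unit times the Jacobian structure of the Whitney umbrella, and in fact even $A=0$ is attainable. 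Your proposal needs one of these two ingredients; as written, Step 3 is not a proof.
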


\begin{proof}
The weights of the centre $(x^2,y^3,z^3)$ are
\[
(w_1,w_2,w_3) := (\tfrac{1}{2},\tfrac{1}{3},\tfrac{1}{3}).
\]
    Note that the linearization of the leading term $\ps_0$ is $2x\cvf{y}\wedge\cvf{z}$.  It has order $w_1-w_2-w_3$, and since $w_1 > w_2=w_3$, all other linear bivectors have strictly lower order.  Hence, the linearization of $\ps$ itself is also $2x\cvf{y}\wedge\cvf{z}$.

    By \autoref{prop:3d-poisson-germs}, there is an automorphism tangent to the identity that takes $\ps$ to
    \[
    \ps' := (2x+A(f))\cvf{y}\wedge\cvf{z} + [\cvf{x}\wedge\cvf{y}\wedge\cvf{z},B(f)] = [\cvf{x}\wedge\cvf{y}\wedge\cvf{z},x^2+B(f)] + A(f) \cvf{y}\wedge\cvf{z}
    \]
    for some $f \in \KK[[y,z]]$ and $A(f),B(f)\in f\KK[[f]]$ with $A(f) \in (y,z)^2$ and $B(f) \in (y,z)^3$.  Since $a_3 = 3 < 4 = 2 a_1$, \autoref{lem:tangent-identity-weighted} ensures that this transformation preserves the centre  $(x^2,y^3,z^3)$.  
Comparing the leading terms of $\ps'$ and $\ps$, we see that after a linear change of variables in $y,z$ (which automatically respects the centre), we must have 
    \[
    \lt{B(f)} = -y^2z.
    \] 
Since this monomial has a reduced factor, $B(f)$ must also; hence we have $B(f)=f\tilde B(f)$ for a unit $\tilde B \in \KK[[f]]^\times$.  Thus, by redefining $f$, we may assume without loss of generality that $B(f) = f$, and
\[
\ps = (2x+A(f))\cvf{y}\wedge\cvf{z} + f_y \cvf{z}\wedge\cvf{x} + f_z\cvf{x}\wedge\cvf{y}
\]
    The vanishing ideal of $\ps$ is then $(2x+A(f),f_y,f_z)$.  If $f\in\KK[[y,z]]$ were reduced, the generators would form a complete intersection, contradicting our assumption that  the vanishing locus of $\ps$ is non-isolated.  Hence $f$ cannot be reduced.  Given that its leading term is $-y^2z$, we must have $f = -\tilde y^2 \tilde z$ for some $\tilde y,\tilde z \in \KK[[y,z]]$ whose leading terms are $y$ and $z$.  Applying the automorphism $(y,z)\mapsto (\tilde y ,\tilde z)$, we may therefore assume without loss of generality that
    \begin{align}
    \ps = u(y,z)\cdot\left([\cvf{x}\wedge\cvf{y}\wedge\cvf{z},W] + A(y^2z)\cvf{y}\wedge\cvf{z} \right).
    \end{align}
    where $u(y,z)$ is an invertible function corresponding to the Jacobian determinant $\det \frac{\partial (\tilde y,\tilde z)}{\partial(y,z)}$.  
    
    We need to argue that by a change of variables, we can replace the element $A(y^2z) \in y^2z\KK[[y^2z]]$ with an element of $W\KK[[W]]$.  Since the class of invertible functions is preserved by coordinate transformations, we may assume without loss of generality that $u=1$, i.e.
    \begin{align}
    \ps = [\cvf{x}\wedge\cvf{y}\wedge\cvf{z},W] + A(y^2z)\cvf{y}\wedge\cvf{z}. \label{eq:whitney-y2z}
    \end{align}
    To construct the desired transformation, we will use the dg Lie algebra $\fg^\bullet < {\hder}[1]$ associated to the Poisson structure $\ps_0 = [\cvf{x}\wedge\cvf{y}\wedge\cvf{z},W]$ and the centre $(x^1,y^1,z^\infty)$, with differential
    \[
    \delta = [\ps_0, -],
    \]
    or more precisely a subalgebra $\tilde \fg^\bullet \subset \fg^\bullet$, constructed as follows.

    Let $R := \KK[[x,y^2z]] <\hO$ be the  subalgebra of elements expressible as power series in $x$ and $y^2z$, and let $\mathfrak{r} :=  (x,y^2z)$ be its maximal ideal.     We define a graded subspace $\tilde \fg^\bullet < \fg^\bullet$ concentrated in degree zero and one, by the formula
\[
\tilde \fg^0 = \mathfrak{r}  z\cvf{z} \qquad \tilde \fg^1 := \mathfrak{r} \cvf{y}\wedge \cvf{z}.
\]
    It is straightforward to verify that $\tilde \fg^\bullet$ is a dg Lie subalgebra of $\fg^\bullet$.  Note further that for $\ps$ of the form \eqref{eq:whitney-y2z}, the bivector $\ps-\ps_0 \in \tilde \fg^1$ is a Maurer--Cartan element of $\tilde \fg^\bullet$.  Hence it is enough to show that every Maurer--Cartan element of $\tilde\fg^\bullet$ is equivalent to one lying in the subspace $\fh := W\KK[[W]]\cvf{y}\wedge\cvf{z} < \tilde \fg^1$, viewed as dg Lie subalgebra $\fh^\bullet\subset \fg^\bullet$ concentrated in degree one.  For this, note that since $\fh^2=0$, it suffices by \autoref{prop:MC-simplify} to show that the natural projection $\fh  \to \coH[1]{\tilde \fg}$ is surjective.
    
    To establish this surjectivity, note that the differential on $\tilde \fg^\bullet$ is determined by its action on monomials as follows:
    \[
    \delta(x^k(y^2z)^l \cdot \logcvf{z}) = \rbrac{k x^{k-1}(y^2z)^{l+1} + 2(l+1)x^{k+1}(y^2z)^l}\cvf{y}\wedge \cvf{z}
    \]
    for every $k,l \ge 0$. 
    This formula implies, by induction on $k$, that $x^k(y^2z)^l\cvf{y}\wedge\cvf{z}$ is cohomologous to  a positive multiple of $(-1)^{\tfrac{k}{2}}((y^2z)^{l+\tfrac{k}{2}}\partial_y\wedge \partial_z$ if $k$ is even, and to zero if $k$ is odd, i.e.~the cohomology is topologically spanned by the classes $[(y^2z)^l\cvf{y}\wedge\cvf{z}]$ for $l > 0$.  
    Since $(y^2z)^l = (-1)^l W^l \mod x$, this also shows that $(y^2z)^l\cvf{y}\wedge \cvf{z}$ is cohomologous to a multiple of $W^l\cvf{y}\wedge\cvf{z}$, and hence $\fh$ surjects onto $\coH[1]{\tilde\fg}$, as desired.
\end{proof}

\section{Singularity invariants and resolutions}\label{sec:invariants}

We now recall the construction of resolution of singularities via weighted blowups as in \cite{Abramovich2024,McQuillan2020}. The only possibly new results are \autoref{lem:canonical-numerics} and the results of \autoref{sec:ade}; the rest is review of previous work by other authors.  For further exposition and details we refer the reader also to \cite{Abramovich2025a,Abramovich2025,Brais2025,Lee2020,Wlodarczyk2023,Temkin2025}.

\subsection{Singularity invariants from admissible centres} The following can be thought of as the condition that an ideal ``vanishes to weighted order one'' on a centre:

\begin{definition}
    Let $\X$ be an orbifold, and let $\cJ <\cO{\X}$ be a coherent sheaf of ideals.  A centre $\cntr$ is $\cJ$-\defn{admissible}  if $\ord[\cntr]{\cJ} = 1$.  In this case, we may also write that $\cntr$ is \defn{$\Y$-admissible}, where $\Y = \van{\cJ}$ is the vanishing locus of $\cJ$.
\end{definition}

The set of $\cJ$-admissible centres has a partial order, induced by the lexicographic order on the corresponding exponent sequences.  The following result is stated and proven in \cite{Abramovich2024}; as explained in \cite{Brais2025}, it can also be deduced (in a different form) from the results of \cite{Bierstone1997}.
\begin{theorem}
The set of $\cJ$-admissible centres has a unique maximal element.
\end{theorem}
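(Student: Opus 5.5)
We must prove: the set of $\cJ$-admissible centres has a unique maximal element, with respect to lexicographic order on exponent sequences. The plan is to work locally at a point $p$ and build the maximal exponent sequence inductively, one coordinate at a time, following the Abramovich--Temkin--W\l{}odarczyk construction via maximal contact and coefficient ideals.

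The first step is to show the maximal $a_1$ equals $\ord[p]\cJ$. Write $a_1 = \ord[p](\cJ)$, the ordinary order of vanishing. For any admissible centre with exponents $(b_1,\ldots)$ we have $\ord[\cntr]\geq w_k\ord[p]$ on all functions, and the smallest weight $w_k=1/b_k \le 1/b_1$, so $1=\ord[\cntr]\cJ\ge \ord[p]\cJ/b_k$. This is not quite the right inequality, so instead we argue directly: some $f\in\cJ$ has a degree $a_1$ term, and the weighted order of any degree-$a_1$ monomial is at most $a_1 w_1 = a_1/b_1$. Hence $b_1\le a_1$. Equality is achieved by taking a maximal contact hypersurface $x_1$, meaning a $(a_1-1)$st derivative of an element of $\cJ$ with nonzero linear part.

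The inductive step passes to the coefficient ideal $\mathcal C(\cJ,a_1)$ restricted to the maximal contact hypersurface $\{x_1=0\}$. This is the ideal generated by $(\partial^{j} f)^{a_1!/(a_1-j)}$, rescaled so that admissibility for $\cJ$ with first exponent $a_1$ corresponds to admissibility for the coefficient ideal on the hypersurface, in one fewer variable. We need the key lemma: a centre $(x_1^{a_1},x_2^{b_2},\ldots)$ is $\cJ$-admissible if and only if $(x_2^{b_2},\ldots)$, suitably scaled, is admissible for the restricted coefficient ideal. Granting this, induction on dimension gives a maximal sequence $(a_2,\ldots)$ in the hypersurface, and the lexicographic maximum is $(a_1,a_2,\ldots)$.

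For uniqueness, we must show the resulting centre, as a filtration of ideals, does not depend on the choices of maximal contact. We would use that the filtration is characterized intrinsically, as generated by $\cJ$ and its derivatives (the ideals $\wtng[\lambda]$ built from $\mathcal D^{\le j}\cJ$ and the coefficient ideals). The two-centres argument from ATW then applies: if two admissible centres share the maximal invariant, then their sum filtration is again admissible with at least as large an invariant, which forces them to be equal. Gluing is then automatic, since the intrinsic description is étale local.

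I expect the main obstacle to be the key lemma relating admissibility for $\cJ$ to admissibility for the coefficient ideal. This needs both the homogeneity of the coefficient ideal under derivations and a check that the weighted order is unchanged by changing maximal contact coordinates. I would first try a formal computation in coordinates using Taylor expansion in $x_1$, with Weierstrass-type reduction.
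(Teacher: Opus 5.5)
\paragraph{Verdict: genuine gap.} Your construction of the candidate centre is the Abramovich--Temkin--W\l{}odarczyk argument that the paper cites: order, maximal contact, coefficient ideal on $\van{x_1}$, induction on dimension. The ``key lemma'' you single out is actually the routine part. One direction is the Taylor expansion of $f\in\cJ$ in $x_1$; the other is that a derivation lowers weighted order by at most the top weight. The real gaps are elsewhere.

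\textbf{Maximality against arbitrary centres.} Your lemma only covers centres that already have your chosen $x_1$ as a coordinate. So it does not show that $(a_1,a_2,\ldots)$ dominates an \emph{arbitrary} admissible centre $\Z'_\bullet$, with weighted coordinates $(y_1^{b_1},y_2^{b_2},\ldots)$ say, when $b_1=a_1$. The missing step is this. Derivations lower $\mathrm{ord}_{\Z'}$ by at most $1/b_1=1/a_1$, so $\mathrm{ord}_{\Z'}(\mathcal{D}^{\le a_1-1}\cJ)\ge 1/a_1$. Hence the maximal contact element $x_1$ has $\Z'$-order at least $1/a_1$. Its nonzero linear part therefore involves only those $y_i$ of weight exactly $1/a_1$, and $x_1$ can replace one of them without changing the filtration. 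Only after this reduction do your lemma and the inductive hypothesis on $\van{x_1}$ give $(b_2,b_3,\ldots)\le(a_2,a_3,\ldots)$.

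\textbf{Uniqueness.} The step fails as proposed, because the sum of the filtrations of two centres is in general not a centre. In $\bA^2$, take $(x^2,y^3)$ and $((x+y)^2,y^3)$. The sum filtration equals $\fm$ for $0<\lambda\le\tfrac{1}{2}$ and $\fm^2$ for $\tfrac{1}{2}<\lambda\le\tfrac{2}{3}$. If it were a centre, both weights would have to equal $\tfrac{1}{2}$. Yet its piece of index $1$ is $(x^2,\,2xy+y^2,\,xy^2,\,y^3)\subsetneq\fm^2$. So ``the sum is admissible with at least as large an invariant'' has no content. Likewise, an intrinsic description of the filtration through derivatives of $\cJ$ is precisely what would need proof.

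Uniqueness instead comes out of the same induction once the reduction above is available:
\begin{itemize}
\item A competitor of maximal invariant has $x_1$ as a top-weight coordinate.
\item Its restriction to $\van{x_1}$ is, after rescaling, a maximal admissible centre for the restricted coefficient ideal, so it coincides with yours by induction.
\item A centre in which $x_1$ is a coordinate of maximal weight is determined by $x_1$ and its restriction to $\van{x_1}$. Two lifts of weighted coordinates for the same centre on $\van{x_1}$ differ by a filtered coordinate change plus multiples of $x_1$, and the latter have order at least $w_1\ge w_i$.
\end{itemize}
Independence of the choice of maximal contact is then a corollary of uniqueness rather than an input to it.

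\textbf{Gluing.} This is not quite automatic either. You also need that, near each point of maximal invariant, the support of the local maximal centre is exactly the locus where the local invariant is maximal.
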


\begin{definition}\label{defn:assoc-centre}
    The maximal $\cJ$-admissible centre is call the \defn{associated centre of $\cJ$}; we denote it by $\ZCan(\cJ)$.    
The \defn{invariant} of $\cJ$ is the corresponding exponent sequence
    \[
    \inv{\cJ} := \ba(\ZCan(\cJ)) 
    \]
We also write $\ba(\cJ)=\inv{\cJ}$ and denote by $\bw(\cJ)$, $\bkappa(\cJ)$ the corresponding weight and weight sum sequences.
\end{definition}

\begin{remark}\label{rmk:local-invariant}
There is also a local version of the invariant and associated centre, defined for the germ of $(\X,\Y)$ at a point $p \in \Y$.  The local invariant gives an upper semi-continuous function on $\Y$ with values in the totally order set of exponent sequences.  The global invariant is the maximum of all the local invariants.  In particular, if $\U \subset \X$ is open, then $\inv{\U,\Y \cap \U} \leq \inv{\X,\Y}$, which will be useful below.
\end{remark}

\begin{example}\label{lem:a1-interp}
The first entry $a_1(\cJ)$ in the invariant $\ba(\cJ)= \inv{\cJ}$ is equal to the maximal unweighted order of vanishing of $\cJ$ at a point of $\X$.  In particular, $a_1(\cJ)=1$ if and only if the vanishing locus $\Y =\van{\cJ}$ is locally contained in a smooth hypersurface in a neighbourhood of any point $p \in \Y$.
\end{example}

\begin{remark}\label{rmk:power-inv}
    The uniqueness of the maximal admissible centre implies that if $k > 0$, then the associated centres $\ZCan(\cJ)$ and $\ZCan(\cJ^k)$ differ be reindexing the weights by a factor of $k$, so that $\inv{\cJ^k} = k\cdot \inv{\cJ}$.
\end{remark}

We denote by
\[
\Inv := \set{ \ba}{\ba = \inv{\X,\Y} \textrm{ for some }(\X,\Y)} \subset \xQQ^{\NN}
\]
the set of exponent sequence that occur as the invariant of some ideal on some orbifold.  Not every exponent sequence lies in $\Inv$; see \autoref{sec:numerical-inv} below.

Starting from an orbifold $\X$ and a subvariety $\Y\subset\X$ of pure codimension, one can blow up $\X$ along the centre $\ZCan(\X,\Y)$ and take the strict transform of $\Y$ to get a new pair $\Bl{\X,\Y}{\cntr}$.  The following result implies that, if one repeats this process recursively, then after finitely many steps one arrives at a pair $(\X',\Y')$ for which $\Y'$ is smooth.  The iterated blowdown $(\X',\Y')\to(\X,\Y)$ then gives a resolution of singularities in the category of orbifolds.

\begin{theorem}[{\cite{Abramovich2024}}]\label{thm:ATW}
The following statements hold
\begin{enumerate}
\item The set $\Inv\subset \xQQ^\NN$ of invariants is well ordered with respect to the lexicographic ordering on exponent sequences.
    \item A subvariety $\Y\subset \X$ is smooth of pure codimension $k$ if and only if 
     \[
    \inv{\X,\Y} = (\underbrace{1,\ldots,1}_{k\textrm{ copies}},\infty,\ldots).
    \]
    This is the minimal possible invariant of a codimension-$k$ subvariety.
    \item\label{it:codim-k-containment} More generally, if $\Y$ is contained in a smooth subvariety $\W\subset\X$ of codimension $k$, then 
    \[
    \inv{\X,\Y} = (\underbrace{1,\ldots,1}_{k\textrm{ copies}},\inv{\W,\Y}).
    \]
    Conversely, if the first $k$ entries of $\inv{\X,\Y}$ are equal to one, then the germ of $\Y$ at any point is contained in the germ of a smooth codimension-$k$ subvariety.
    \item The invariant decreases upon blowing up the associated centre: if $\cntr = \ZCan(\X,\Y)$, then
    \[
    \inv{\Bl{\X,\Y}{\cntr}} < \inv{\X,\Y}
    \]
    where $\Bl{\X,\Y}{\cntr}$ is the pair obtained by blowing up $\X$ along $\cntr$ and taking the strict transform of $\Y$.
\end{enumerate}
\end{theorem}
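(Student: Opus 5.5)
This result is due to \cite{Abramovich2024}, and the plan is to reconstruct their argument rather than find a new one. The whole proof rests on a recursive description of $\inv{\cJ}$ via \emph{maximal contact} and \emph{coefficient ideals}, so I would establish that description first. Work locally at a point $p$, and let $a_1$ be the maximal unweighted order of $\cJ$ near $p$, as in \autoref{lem:a1-interp}. The ideal $\mathcal{D}^{a_1-1}\cJ$ of derivatives of order $\le a_1-1$ contains an element $x_1$ with $\dd x_1(p)\neq 0$; its zero set $\van{x_1}$ is a hypersurface of maximal contact. Next form the coefficient ideal
\[
C(\cJ,a_1) = \sum_{j<a_1} \big(\mathcal{D}^{j}\cJ\big)^{a_1!/(a_1-j)}
\]
and restrict it to $\van{x_1}$. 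The key step is then:

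\textbf{Step 1.} Show that $\ba(\cJ) = (a_1, a_1\cdot \ba(C|_{\van{x_1}})/a_1!)$, with the suitable rescaling coming from \autoref{rmk:power-inv}. Show also that the associated centre is $(x_1^{a_1}, \text{centre of } C|_{\van{x_1}})$. This is proved by verifying admissibility of this centre, and then showing maximality by comparing leading terms in $\gr$. Independence of the choice of $x_1$ follows from the uniqueness of the maximal admissible centre, which is stated above.

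\textbf{Step 2.} Derive (1)--(3) from Step 1.

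For (1), induction on the recursion shows that $a_{i+1} \ge a_i$, and that $a_{i+1}$ lies in $\frac{1}{N_i}\ZZ$, where $N_i$ depends only on $a_1,\dots,a_i$. The sequences have length at most $\dim\X$. A lexicographic order on bounded-length sequences whose $i$-th entry ranges over a well-ordered set (determined by the earlier entries), with $\infty$ added as a top element, is a well-order.

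For (3), if $\Y\subset\W$ with $\W$ smooth of codimension $k$, then $\cJ$ contains local equations $x_1,\dots,x_k$ of $\W$. These are of order one, so each can be taken as a maximal-contact coordinate. At each step the coefficient ideal restricts to the ideal of $\Y$ inside the smaller smooth variety, which gives $(1,\dots,1,\inv{\W,\Y})$. Conversely, if $a_1=\dots=a_k=1$, the maximal contact coordinates themselves lie in $\cJ$ and cut out the required smooth germ.

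For (2), take $\W=\Y$ in (3), noting that $\inv{\Y,\emptyset}$ is the all-$\infty$ sequence. Minimality holds because $a_i\ge 1$ always, and because $k$ finite entries are forced once the codimension is $k$.

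\textbf{Step 3 (main obstacle).} This is the drop (4). Blow up along $\cntr=\ZCan(\cJ)$ and work in the chart of $\dnc{\X}{\cntr}$ where $\tilde t$ cuts out $\E$. The total transform is $\tilde t\,\cJ'$, where $\cJ'$ is the weak transform, and $\cJ'|_{\tilde t=0}$ is generated by the leading terms $\lt{f}$ for $f \in \cJ$ of order one.

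I would argue by contradiction at a point $q\in\E$ in the chart where some $\tilde x_i=1$. Suppose the invariant at $q$ were $\ge\ba$. Each entry is at least the corresponding $a_j$, so $\cJ'$ would have order $\ge a_1$ at $q$, with maximal contact along the transform of $\van{x_1}$, and similarly down the recursion. Comparing with the leading-term ideal, this forces $\lt{\cJ}$ to be weighted homogeneous and invariant under translation in the $\tilde x_i$ direction. That in turn makes the centre $(x_1^{a_1},\dots)$ non-maximal, because a larger admissible centre could be constructed, contradicting Step 1.

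Controlling the coefficient ideals under the transform in this step is where most of the work lies. I would follow ATW's inductive use of the recursion, passing to the strict transform of the maximal contact hypersurface to handle it. Finally, the strict transform of $\Y$ has invariant bounded by that of $\cJ'$, by \autoref{rmk:local-invariant}, which gives $\inv{\Bl{\X,\Y}{\cntr}} < \inv{\X,\Y}$.
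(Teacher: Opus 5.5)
The paper gives no proof of this theorem; it is quoted from \cite{Abramovich2024}. So reconstructing ATW's maximal-contact/coefficient-ideal recursion is the right plan, and your Steps 1--2 are in line with it. In particular, the rescaling by $1/(a_1-1)!$ is correct, and so is the observation that $C(\cJ,1)=\cJ$ with factor $0!=1$ gives (3).

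The genuine gap is Step 3, which carries all the content of (4).

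\emph{The lexicographic comparison.} From $\mathrm{inv}_q(\cJ')\ge\ba$ you cannot infer that each entry is at least $a_j$: the lexicographic order gives no entrywise bounds (e.g.\ $(3,3)>(2,5)$). To reach entrywise equality you need a priori non-increase at every level. That means $\mathrm{ord}_q(\cJ')\le a_1$, and when equality holds, the transform of $x_1$ is again a maximal-contact element and the transform of $C(\cJ,a_1)|_{\van{x_1}}$ lies in the corresponding coefficient ideal of $\cJ'$, and so on down the recursion. This is exactly the part you defer to ATW.

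\emph{The translation-invariance mechanism.} The claim ``non-drop forces $\lt{\cJ}$ to be invariant under translation in the $\tilde x_i$-direction, hence a larger admissible centre exists'' is the unweighted directrix argument. It cannot be run on $\lt{\cJ}$ in the weighted setting. The homogeneity part is vacuous, since leading terms are weighted homogeneous by construction, and no larger centre is actually produced. For instance, $\cJ=(x^2+y^2z^2)$ has invariant $(2,4,4)$ and centre $(x^2,y^4,z^4)$. In the $y$-chart $x=\tilde t^2x'$, $y=\tilde t$, $z=\tilde t z'$, the weak transform is $(x'^2+z'^2)$, which still has order $2=a_1$ at the origin of the chart. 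Yet $\lt{\cJ}=(\dot x^2+\dot y^2\dot z^2)$ is not $\cvf{\dot y}$-invariant, and the drop to $(2,2)$ is visible only in the second-level coefficient ideal.

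\emph{What closes the argument.} ATW use a direct chart computation:
\begin{enumerate}
\item Prove weighted analogues of the transformation rules for derivative ideals and coefficient ideals.
\item Deduce that, in the $x_i$-chart, the first $i-1$ maximal-contact coordinates and restricted coefficient ideals transform into maximal-contact coordinates and sub-ideals of the coefficient ideals of $\cJ'$. Hence the first $i-1$ entries cannot increase.
\item Observe that the transform of $x_i$, a maximal-contact element for the $i$-th coefficient ideal, is a unit in its own chart. This forces the $i$-th entry to drop.
\end{enumerate}

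Some smaller repairs are also needed.
\begin{itemize}
\item In (1), your bound on the length is essential. For $\Inv$ as literally defined (all dimensions at once), well-ordering fails, since $(1)>(1,1)>(1,1,1)>\cdots$. So the statement must be read in fixed dimension, as in ATW.
\item In (2), ``$a_i\ge 1$ and length $\ge k$'' does not give minimality. A sequence $(1,\ldots,1,a_{k+1},\ldots)$ with $a_{k+1}<\infty$ is lexicographically \emph{smaller} than $(1,\ldots,1,\infty,\ldots)$. Such sequences must be excluded using the converse in (3) together with reducedness and pure codimension $k$, which force $\Y=\W$ locally.
\item The same ingredients are needed to get smoothness from $\inv{\X,\Y}=(1,\ldots,1,\infty,\ldots)$. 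Also, taking $\W=\Y$ means using $\inv{\Y,\Y}$ (the zero ideal), not $\inv{\Y,\emptyset}$.
\item The total transform is $\tilde t^{1/\gcd(\bw)}\cJ'$ rather than $\tilde t\,\cJ'$.
\item The bound for the strict transform comes from monotonicity of the invariant under enlarging the ideal (the strict transform ideal contains the weak transform), not from \autoref{rmk:local-invariant}.
\end{itemize}
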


\begin{remark}\label{rmk:abhyankar}
    A similar approach to resolution of plane curve singularities using \emph{ordinary} blowups was described by Abhyankar in \cite{Abhyankar1983}.  Namely, let $(a_1,a_2)$ be the invariant of a plane curve singularity.  Then in \emph{op.\ cit.} it is shown that the pair $(d,e) = (a_1,a_2/a_1)$ decreases in the lexicographical order when one performs an ordinary blowup at the singular point.  But this is equivalent to saying that the invariant $(a_1,a_2)$ itself decreases.

    This fact, combined with \autoref{thm:ATW} part (\ref{it:codim-k-containment}), has the following consequence for locally planar curves embedded in higher-dimensional varieties, which will be useful for us below.   Suppose that $\Y \subset \X$ is a curve contained in the support $\W := \supp \cntr$ of a two-dimensional centre $\cntr$, and suppose that $\Y$ has a unique singular point $p$. Then the invariant of $\Y$ has the form $(1,\ldots,1,b,c)$, with length equal to $\dim \X$, where $(b,c) = \inv{\W,\Y}$.  Let $\cntr{}[p,b]$ be the $b$-completion of $\cntr$ in the sense of \autoref{sec:b-completion}.  Then since the reduced centre underlying $\cntr{}[p,b]\cap \W$ is unweighted, the blowup of $(\X,\Y)$ at $p$ restricts to the ordinary blowup of $(\W,\Y)$ at $p$.  Hence Abhyankar's result implies that $\inv{\Bl{\X,\Y}{\cntr{}[p,b]}} < \inv{\X,\Y}$.
\end{remark}

\subsection{Numerical constraints}\label{sec:numerical-inv}
As mentioned above, not every exponent sequence $\ba = (a_1\leq a_2\leq \cdots)$ can arise as the invariant of an ideal, i.e.~it need not lie in $\Inv\subset\xQQ^{\NN}$.  According to \cite[\S2.1]{Temkin2025},  those that do are characterized by the following property: for every $j >0$, there exist $n_1,\ldots, n_j \in \ZZ_{\ge 0}$ such that  $\sum_{i=1}^j \frac{n_i}{a_i} = 1$ and $n_j \neq 0$.  Equivalently, as formulated in~\cite[\S3.1]{Brais2025}: for every $j \ge 0$, we have
    \begin{align}
    a_{j+1} = \frac{n_{j+1}}{1- \frac{n_1}{a_1}-\cdots-\frac{n_j}{a_j}} \label{eq:invariant-form}
    \end{align}
    where $n_1,\ldots,n_{j+1} \in \ZZ_{\ge 0}$ are such that $\sum \tfrac{n_j}{a_j} <1$ and $n_{j+1} > 0$.

These constraints allow us to make predictions about the form of subsequent entries of an invariant from knowledge of a prefix.  The following examples of this principle will be useful below.
\begin{example}\label{ex:inv-starts-integer}
    If $\ba \in \Inv$ is an invariant, then the first entry $a_1$ is an integer.  (This follows from \autoref{lem:a1-interp}.)
\end{example}

\begin{example}\label{ex:2givesint}
If $\ba  \in \Inv$ has a sequence of 2s as a prefix, i.e.
\[
\ba = (2,\ldots,2,a_{j+1},\ldots)
\]
then  $a_{j+1}$ must be an integer. Indeed, in this case, the constraint $\tfrac{n_1}{2}+\cdots+\tfrac{n_j}{2}<1$ implies that at most one of the integers $n_1,\ldots,n_j$ is nonzero, and if nonzero it is equal to one.  Hence the denominator of \eqref{eq:invariant-form} is either $1$ or $\tfrac{1}{2}$.  Either way, $a_{j+1}\in\ZZ$, as claimed.
\end{example}

\begin{example}\label{ex:23denoms}
If $\ba \in \Inv$ has a sequence of 2s and 3s as a prefix, i.e.
\[
\ba = (2,\ldots,2,3,\ldots,3,a_{j+1},\ldots),
\]
then $a_{j+1} \ge 3$ is either an integer, or an integer multiple of $\tfrac{3}{2}$, so that
\[
a_{j+1} \in \{3,\, 4,\, 4.5,\, 5,\, 6,\, 7,\, 7.5,\, 8,\, 9,\, 10,\, 10.5,\, \ldots \}.
\]
The proof is similar to the previous example, but more involved: now the denominator has the form $1 - \tfrac{n}{2}-\tfrac{m}{3}>0$ where $n,m \in \ZZ_{\ge0}$, so its only possible values are $\tfrac{1}{6},\tfrac{1}{3},\tfrac{1}{2},\tfrac{2}{3}$ or $1$. Therefore $a_{j+1}\in 6\ZZ \cup 3\ZZ \cup 2\ZZ \cup \tfrac{3}{2}\ZZ \cup \ZZ = \ZZ\cup\tfrac{3}{2}\ZZ$, as claimed.
\end{example}

These constraints imply corresponding constraints on the weights $\bw$ and the weight sums $\bkappa$.  A useful one for us is the following.
\begin{lemma}\label{lem:canonical-numerics}
Let $\ba \in \Inv$ be an invariant of length two or three with $a_1  > 1$.  Then the following statements are equivalent:
\begin{enumerate}
\item\label{it:can-num-236} $\ba < (2,3,6)$.
    \item\label{it:can-num-kappa} Either $\kappa_3(\ba) > 1$ or $\ba = (2,2)$.
    \item\label{it:can-num-list} Either $\ba$ has the form $(2,2,n)$ with $n\ge 2$ an integer, or it is equal to one of $(2,2)$, $(2,3,3)$, $(2,3,4)$, $(2,3,4.5)$ or $(2,3,5)$.
\end{enumerate}
\end{lemma}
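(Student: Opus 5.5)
We prove the equivalences by a direct case analysis on the first entries of $\ba$, using the numerical constraints of \autoref{sec:numerical-inv}. Throughout, for an invariant of length two we set $a_3=\infty$, so that $\kappa_3 = \kappa_2 = \tfrac{1}{a_1}+\tfrac{1}{a_2}$. The lexicographic comparison with $(2,3,6)$ is made in the same convention, where $(2,2) = (2,2,\infty,\ldots)$.

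We first show that (\ref{it:can-num-list}) implies both (\ref{it:can-num-236}) and (\ref{it:can-num-kappa}). This is a direct check on the listed sequences.
\begin{itemize}
\item For $(2,2,n)$ we have $\kappa_3 = 1+\tfrac1n>1$, and $(2,2,n)<(2,3,6)$ because the second entries satisfy $2<3$.
\item The sequence $(2,2)$ is allowed explicitly in (\ref{it:can-num-kappa}), and it is lexicographically smaller than $(2,3,6)$.
\item For $(2,3,c)$ with $c\in\{3,4,4.5,5\}$ we have $\kappa_3=\tfrac56+\tfrac1c>1$ exactly because $c<6$, and $(2,3,c)<(2,3,6)$ for the same reason.
\end{itemize}

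Next we show that each of (\ref{it:can-num-236}) and (\ref{it:can-num-kappa}) implies (\ref{it:can-num-list}). By \autoref{ex:inv-starts-integer}, $a_1$ is an integer, and $a_1>1$ forces $a_1\ge 2$.

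If $a_1\ge 3$, then (\ref{it:can-num-236}) fails, and $\kappa_3\le \tfrac{3}{a_1}\le 1$ shows that (\ref{it:can-num-kappa}) fails as well. So we may assume $a_1=2$.

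By \autoref{ex:2givesint}, $a_2$ is then an integer with $a_2\ge 2$.

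\emph{Case $a_2=2$.} If the length is two, $\ba=(2,2)$. Otherwise \autoref{ex:2givesint} applied to the prefix $(2,2)$ shows that $a_3$ is an integer $n\ge 2$. Either way $\ba$ appears in the list.

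\emph{Case $a_2\ge 4$.} Then $(2,a_2,\ldots)>(2,3,6)$, so (\ref{it:can-num-236}) fails. Also $\kappa_3\le \tfrac12+\tfrac14+\tfrac14=1$, so (\ref{it:can-num-kappa}) fails.

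\emph{Case $a_2=3$.} If the length is two, $\ba=(2,3)$, which means $a_3=\infty>6$. Then (\ref{it:can-num-236}) fails, and $\kappa_3=\tfrac56\le 1$ so (\ref{it:can-num-kappa}) fails. If the length is three, \autoref{ex:23denoms} gives $a_3\in\ZZ\cup\tfrac32\ZZ$ with $a_3\ge 3$. Each of the conditions $\ba<(2,3,6)$ and $\kappa_3>1$ is equivalent to $a_3<6$. The elements of $\ZZ\cup\tfrac32\ZZ$ in $[3,6)$ are exactly $3,4,4.5,5$, which are the listed cases.

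This closes the cycle of implications. The only step needing care is the case $a_2=3$, where one must invoke \autoref{ex:23denoms} to exclude non-listed fractional values of $a_3$ such as $5.5$. Beyond that, the only subtlety is being consistent about the length-two convention when comparing $(2,2)$ and $(2,3)$ with $(2,3,6)$.
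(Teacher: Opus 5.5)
Your proof is correct and uses the same ingredients as the paper's: integrality of $a_1$ (\autoref{ex:inv-starts-integer}), \autoref{ex:2givesint}, \autoref{ex:23denoms}, and the bounds $\kappa_3\le 3/a_1$, $\kappa_3\le \tfrac12+\tfrac{2}{a_2}$ and $\kappa_3\le\tfrac56+\tfrac{1}{a_3}$ in the excluded cases. The only difference is organizational: the paper runs the cycle (3)$\Rightarrow$(2)$\Rightarrow$(1)$\Rightarrow$(3), proving (2)$\Rightarrow$(1) by contrapositive, whereas you check that (3) implies both conditions and then use one exhaustive case split on $(a_1,a_2,a_3)$ to show that (1) and (2) both fail off the list.
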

\begin{proof}
Since the length of $\ba$ is two or three, we have  $\ba = (a \leq b \leq c \leq \infty \leq \cdots )$, with $b$ finite and $c$ possibly infinite. The implication (\ref{it:can-num-list}) $\implies$ (\ref{it:can-num-kappa}) is an easy calculation, whose result is summarized in \autoref{tab:ADE} below.  It therefore suffices to prove that  (\ref{it:can-num-kappa}) $\implies$ (\ref{it:can-num-236}) $\implies$ (\ref{it:can-num-list}).

To see that (\ref{it:can-num-236}) implies (\ref{it:can-num-list}), suppose that $\ba < (2,3,6)$. Then by definition of the lexicographical order, we must have $2 = a \leq b \leq 3$.  Moreover by \autoref{ex:2givesint}, $b$ is an integer.  Hence either $b=2$, or $b=3$.  If $b=2$, then $c=n$ is an integer or $c = \infty$, giving $\ba = (2,2,n)$ or $\ba = (2,2)$.  If $b=3$, then by definition of the lexicographical order, we have $c < 6$ and hence $c \in \{3,4,4.5,5\}$ by \autoref{ex:23denoms}. 

Finally, to prove that (\ref{it:can-num-kappa}) implies (\ref{it:can-num-236}), we prove the contrapositive.  Suppose that $\ba \ge (2,3,6)$, and note that then $\ba \neq(2,2)$.  We claim that $\kappa_3(\ba) \leq 1$.  There are three cases to consider: either (i) $a\ge 3$; or (ii) $a=2$ and $b > 3$; or (iii) $(a,b)=(2,3)$ and $c \ge 6$.  In case (i) we have
\[
\kappa_3(\ba)  \leq \tfrac{3}{a} \leq 1.
\]
In case (ii), we have by \autoref{ex:2givesint} that $b > 3$ is an integer, and hence $b \ge 4$.  Therefore
\[
\kappa_3(\ba) = \tfrac{1}{2}+\tfrac{1}{b} + \tfrac{1}{c} \leq \tfrac{1}{2} + \tfrac{2}{b} \leq \tfrac{1}{2}+\tfrac{2}{4} = 1.
\]
Finally, in case (iii), we have 
\[
\kappa_3(\ba) = \tfrac{1}{2}+\tfrac{1}{3}+\tfrac{1}{c} = \tfrac{5}{6} + \tfrac{1}{c} \leq \tfrac{5}{6}+\tfrac{1}{6} = 1
\]
as desired.
\end{proof}

\subsection{Surfaces with small weight sums}\label{sec:ade}
Note that if we decrease an exponent sequence $\ba$ in the lexicographical order, the corresponding weight sequence  $\bw$ increases, which tends to (but does not always) increase the weight sum $\bkappa$.   This suggests that there should be relatively few singularities whose invariants have ``large'' weights sums, so that they may be amenable to classification.  We illustrate this here in the case of two-dimensional hypersurface singularities; see also \cite[\S4]{Reid1980} for a related discussion.

Note that here, when we refer to the classification of singularities, we mean up to analytic equivalence.  In particular, if $\Y \subset \X$ is a subvariety of an orbifold, then the singularity type at a point $p \in \Y$ can be determined by looking in an orbifold chart, which reduces the problem to the case of subvarieties of affine space. 

\begin{proposition}\label{prop:small-invariant-ADE}
Let $\X$ be a three-dimensional orbifold, and let $\Y\subset \X$ be a singular surface.  Then the following statements are equivalent.
\begin{enumerate}
\item\label{it:ade-inv} $\inv{\X,\Y} < (2,3,6)$.
\item\label{ade:kappa} Either $\kappa_3(\X,\Y) > 1$ or $\inv{\X,\Y}=(2,2)$.
\item\label{it:ade-list} The only singularities of $\Y$ are Du Val singularities, Whitney umbrellas or normal crossings singularities of multiplicity two, given by equations as in \autoref{tab:ADE}.
\end{enumerate}
In this case, every connected component of the singular locus of $\Y$ is smooth of dimension zero or one.
\end{proposition}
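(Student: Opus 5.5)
The plan is to prove (\ref{it:ade-inv})$\iff$(\ref{ade:kappa}) numerically, then (\ref{it:ade-inv})$\iff$(\ref{it:ade-list}) by local analysis of hypersurface singularities in $\bA^3$.

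\textbf{Step 1: reduction to a local, numerical problem.} Since $\Y$ is a singular surface in a threefold, it is locally a hypersurface. By \autoref{rmk:local-invariant} the global invariant is the maximum of the local invariants, so every condition can be checked pointwise at singular points $p$. At such a point the order of $\Y$ is at least two, so $a_1 \ge 2 > 1$ by \autoref{lem:a1-interp}. The length of $\ba$ is at most three. It is at least two, since otherwise $\Y$ would be locally a power of a smooth hypersurface and hence nonreduced.

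\textbf{Step 2: the numerical equivalence.} With these constraints, \autoref{lem:canonical-numerics} gives (\ref{it:ade-inv})$\iff$(\ref{ade:kappa}) directly. It also tells us that the local invariant at every singular point is one of $(2,2)$, $(2,2,n)$, $(2,3,3)$, $(2,3,4)$, $(2,3,4.5)$ or $(2,3,5)$.

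\textbf{Step 3: from invariants to normal forms.} Fix one of the listed invariants and work in formal weighted coordinates $(x^{a_1},y^{a_2},z^{a_3})$ adapted to the associated centre. Admissibility means the local equation $f$ has $\ord[\cntr] f = 1$. Maximality of the centre then forces the leading term $\lt f$ to be non-degenerate in a suitable sense: it should not be made to have larger exponents by a coordinate change. This is the standard characterisation from \cite{Abramovich2024}, where the leading term ``involves'' each variable appropriately.

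Since $a_1 = 2$, $f$ contains $x^2$. Completing the square (Weierstrass preparation and Tschirnhausen) reduces $f$ to $x^2 + g(y,z)$, and the invariant becomes $(2, \inv{g})$ up to the factor from \autoref{rmk:power-inv}, i.e.\ $\inv{g}$ is essentially $(a_2/2, a_3/2)$.

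We then go through the list.
\begin{itemize}
\item $\ba = (2,2)$: $g = y^2$, or $g$ has order two and depends on $y$ alone. This gives normal crossings $x^2 - y^2$.
\item $\ba = (2,2,n)$: $g = y^2 + h(z)$ with $h$ of order $n$ ($A_{n-1}$), or $g$ is degenerate in $z$. The degenerate subcase is the Whitney umbrella $x^2 - y^2 z$, which is $(2,3,3)$-type in weighted form; this needs care.
\item $(2,3,\ast)$ with $g$ of order three: classify plane cubic leading terms, with the remaining entry detecting $D_n$, $E_6$, $E_7$, $E_8$ via $y^2 z + z^{n-1}$, $y^3 + z^4$, $y^3 + y z^3$ and $y^3 + z^5$, respectively.
\end{itemize}
The converse direction, that each entry of \autoref{tab:ADE} has the stated invariant, is a direct computation of the maximal admissible centre. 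Here the weights sum to more than one exactly as in the classical characterisation of Du Val singularities by weighted homogeneous equations with $\sum w_i > 1$.

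\textbf{Step 4: the singular locus.} Read this off from the normal forms. Du Val points are isolated. Normal crossings and Whitney umbrella points have a smooth singular curve: the $z$-axis. The potential issue is Whitney umbrella pinch points lying on a double curve, but that curve is still smooth.

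\textbf{Main obstacle.} The main obstacle is Step 3. One must rigorously match each invariant to the analytic type, especially distinguishing non-isolated cases (Whitney umbrella, $x^2 - y^2$) from isolated ones sharing the same invariant, and handling $(2,3,4.5)$ ($E_7$). I would first try to use finite determinacy of quasi-homogeneous germs. The weighted-homogeneous leading term $\lt f$ is either an ADE normal form with $\kappa_3 > 1$, so that higher-order terms are absorbed, or it has non-isolated singularity, in which case I would handle the few cases directly.
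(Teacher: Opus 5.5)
Your overall architecture agrees with the paper's. You use \autoref{lem:canonical-numerics} for (1)$\iff$(2) and direct computation for (3)$\Rightarrow$(1). For (1)$\Rightarrow$(3) you use the splitting $f=x^2+g(y,z)$ followed by an analysis of the plane curve $g$. However, the bridge you state between $f$ and $g$ is wrong. For $a_1=2$ there is no rescaling, and $\inv{g}=(a_2,a_3)$, not $(a_2/2,a_3/2)$. For instance $g=y^2z$ has invariant $(3,3)$, and with your formula the case $(2,2,n)$ would force $g$ to have order one, so $f$ would be smooth. With the correct relation the case $a_2=2$ is immediate: $g$ has order two, the Morse lemma gives $g\sim y^2+h(z)$, and you get $A_{n-1}$ or normal crossings. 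There is no ``degenerate'' subcase there: the Whitney umbrella has $a_2=3$ and never occurs with invariant $(2,2,n)$.

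The genuine gap is the case $a_2=3$, specifically $\ba=(2,3,3)$. Your finite-determinacy idea does work whenever $\lt[\cntr]{f}$ has an isolated singularity. Since $\kappa_3>1$, every Milnor-algebra basis monomial of $\lt[\cntr]{f}$ has weighted degree at most $3-2\kappa_3<1$, so $f\sim\lt[\cntr]{f}$, which yields $A_n$, $D_4$, $E_6$, $E_7$, $E_8$. But if the cubic part of $g$ has a double line, then $\lt[\cntr]{f}=x^2+y^2z$ is non-isolated and higher-order terms are not absorbed. This single leading term is shared by the Whitney umbrella and by \emph{every} $D_n$ with $n\ge 5$. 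All of these have invariant $(2,3,3)$, so the last entry does not ``detect'' $n$; cf.\ \autoref{rmk:D-is-different}. This is not a matter of a few cases to check directly: it is the core of the classification, and your proposal gives no argument for it. You need to prove that a plane germ with $3$-jet $y^2z$ is equivalent to $y^2z+z^k$ ($k\ge 4$) or to $y^2z$. Alternatively, argue as the paper does. If $p$ is an isolated singular point, $g$ is one of Arnold's triple points, and computing their invariants shows only $D_n,E_6,E_7,E_8$ have last entry $<6$. If not, $g$ has a multiple factor. Either it is $u^2v$ with $u,v$ of order one, which can be taken as coordinates to give $x^2+y^2z$; one should check their differentials are independent, since otherwise the invariant is at least $(2,3,6)$. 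Or it is a triple factor, giving $x^2+y^3$ with invariant $(2,3)>(2,3,6)$.
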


\begin{proof}
The equivalence of the first two statements, and the list of possible invariants $\ba$ in \autoref{tab:ADE} is the content of the numerical result \autoref{lem:canonical-numerics} above, noting that $\Y$ is singular if and only if $a_1 > 1$, in which case the length is two or three.

For the rest of the proof, we will make use of some explicit calculations of invariants, which we performed using the method described in \cite[\S0.4]{Brais2025}.  In particular, with this method it is straightforward to verify that the singularities listed in (\ref{it:ade-list}) have the desired invariant by direct calculation with the standard defining equations in \autoref{tab:ADE}.  Evidently the singular locus of each is either isolated, or a smooth curve.  It remains to prove that these are the only singularities with invariant less than $(2,3,6)$.

To this end, suppose that the invariant is less than $(2,3,6)$, and let $f$ be a local defining equation for $\Y$ near $p$.  Since  $a_1 = \ord[p]{f}$, we see that $f$ vanishes to order two, so that its Hessian is nonzero.  Hence by the parameterized Morse lemma, there are local coordinates $(x,y,z)$ such that $f = x^2 + g(y,z)$ for some function $g(y,z)$.  Then  $(a_2,a_3)$ is the invariant of $g$, and the singular locus of $\Y$ is identified with the singular locus of the curve $\{g=0\}$ in the plane $\{x=0\}$. 

If $\ba = (2,2,n)$ with $n$ an integer or $\ba = (2,2)$, then $g$ also vanishes to order two, so applying the Morse lemma again we arrange that $f = x^2+y^2 + z^n$ or $x^2+y^2$, respectively.  In the first case, we have an $A_{n-1}$ singularity, which is isolated, and in the second case the change of variables $x' = x+\sqrt{-1}y$ and $y'=x-\sqrt{-1}y$ gives the normal crossings equation $x'y'=0$.

Meanwhile, if $\ba = (2,3,c)$, then $g(y,z)$ has order 3.  We therefore consider the classification of triple points of plane curves.

If the singularities are isolated, then $g$ must be equivalent to one of the singularities listed in \cite[\S15.2]{Arnold1985}.    Calculating the invariants of those polynomials one finds that only $D_n,E_6,E_7$ and $E_8$ have invariants of the desired form; the remaining ones have invariant $(2,3,c)$ with $c \ge 6$.

If the singularities are not isolated, then $g$ must have a multiple component, and since $g$ vanishes to order three, the component must have multiplicity at most three. If the multiplicity is two, then $g =u^2v$ for some functions $u,v$ of order one, which can then be used as coordinates.  Hence after a change of variables we may assume $u=y$ and $v=z$, so that $f = x^2+y^2z$, giving a Whitney umbrella. Meanwhile,  the case of a component of multiplicity three cannot occur: in this case,  after a change of variables, we have $g = y^3$, so that $f= x^2+y^3$, but then the invariant is $(2,3)=(2,3,\infty) > (2,3,6)$, a contradiction.
\end{proof}

\begin{table}[t]
\caption{Surface singularities in $\bA^3$ with invariant less than $(2,3,6)$}\label{tab:ADE}
\renewcommand{\arraystretch}{1.3}
\begin{tabular}{c|c|c|c}
Name & Equation & Invariant $\ba$ & Weight sum $\kappa_3$\\ \hline 
Normal crossings & $xy$ & $(2,2)$ & 1\\ 
Whitney umbrella & $x^2-y^2z$ & $(2,3,3)$ & $1 + \tfrac{1}{6}$ \\
$A_n, n\ge 1$ & $x^2+y^2+z^{n+1}$ & $(2,2,n+1)$ & $1+\tfrac{1}{n+1}$ \\ 
$D_n, n \ge 4$ & $x^2+y^2z+z^{n-1}$ & $(2,3,3)$ & $1+\tfrac{1}{6}$ \\ 
$E_6$ & $x^2+y^3+z^4$ & $(2,3,4)$ & $1+\tfrac{1}{12}$ \\ 
$E_7$ & $x^2+y^3+yz^3$ & $(2,3,4.5)$ & $1+\tfrac{1}{18}$\\
$E_8$ & $x^2+y^3+z^5$ & $(2,3,5)$ & $1 + \tfrac{1}{30} $
\end{tabular}
\end{table}

\begin{remark}\label{rmk:D-is-different}
    \autoref{prop:small-invariant-ADE} shows that the Du Val singularities of type $A$ and $E$ are uniquely determined by their invariant (amongst all isolated hypersurface singularities of dimension two).  Moreover, these cases, the defining equations in \autoref{tab:ADE} are quasi-homogeneous of order one with respect to the exponents defined by the invariant.
    
    However, for the Du Val singularities of type $D$, the situation is different.  They all have the same invariant, namely $(2,3,3)$, while the defining equation is quasi-homogeneous of order one with respect to the exponent sequence $\rbrac{2,2 + \frac{2}{n-2}, n-1}$.  The latter is equal to $(2,3,3)$ when $n = 4$, but is otherwise strictly smaller in the lexicographical order.
\end{remark}

\autoref{prop:small-invariant-ADE} characterizes Du Val singularities as the isolated hypersurface singularities of dimension two for which $\kappa_3 >1$, where $\kappa_3 = \kappa_3(\ZCan(\X,\Y))$ is the weight sum of the associated centre (the one defining the invariant).  In fact, this inequality holds for \emph{any} admissible centre, as a consequence of the fact that Du Val singularities are canonical.  Namely, the following is an equivalent rephrasing of \cite[Theorem 4.1]{Reid1980}; the vector $\alpha$ in \emph{op.\ cit.} is exactly the data of the exponent sequence of a weighted coordinate system and the expression $\alpha(g)$ in \emph{op.\ cit.} is the valuation $\ord[\cntr](\cJ)$ where $\cJ$ is the ideal of $\Y$. For an admissible centre, this valuation is equal to one.
\begin{proposition}[{\cite[Theorem 4.1]{Reid1980}}]
    Let $\X$ be an orbifold of dimension $n$, and let $\Y\subset\X$ be a hypersurface with only canonical singularities.  Then every $\Y$ admissible centre $\cntr$ has weight sum $\kappa_n(\cntr) > 1$.
\end{proposition}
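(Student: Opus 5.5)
The plan is to follow Reid's discrepancy argument, carried out directly with the weighted blowup of \autoref{sec:blowups}.

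\emph{Setting up.} The question is local, so we work in a weighted chart $(x_1^{a_1},\ldots,x_n^{a_n})$ for $\cntr$ centred at a point $p \in \Y\cap\supp\cntr$, with $\Y = \van{f}$. Write $\bw = \lambda\ubw$, where $\ubw$ is reduced and $\lambda = \gcd\bw$. Then $\ord[\cntr] = \lambda\,\ord[\rcntr]$, so admissibility $\ord[\cntr](f)=1$ says exactly that $\ord[\rcntr](f) = m := 1/\lambda$. Since $\ord[\rcntr]$ takes integer values on functions, $m$ is a positive integer. Moreover $\kappa_n(\cntr) = \lambda\,\kappa_n(\rcntr)$. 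Hence it suffices to prove
\[
\textstyle\sum_i \uw_i \ \ge\ m+1,
\]
because this gives $\kappa_n(\cntr) \ge \lambda(m+1) = 1+\lambda > 1$. Here $m = \ord[\rcntr](f)$ and the $\uw_i$ are the reduced weights.

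\emph{Discrepancy computation.} Let $b : \Bl{\X}{\rcntr}\to\X$ be the blowup, with exceptional divisor $\E = \{\tilde t = 0\}$. Use the $\Gm$-quotient description $(\tilde x,\tilde t)\mapsto(\tilde t^{\uw_i}\tilde x_i)$ and compute the pullback of the volume form $\dd x_1\wedge\cdots\wedge\dd x_n$ as in the proof of \autoref{prop:blowdown-polyvector}, using \autoref{lem:euler-quotient} for the $n$-vector $\cvf{x_1}\wedge\cdots\wedge\cvf{x_n}$. This should give
\[
K_{\Bl{\X}{\rcntr}} = b^*K_\X + \Bigl(\textstyle\sum_i\uw_i - 1\Bigr)\E .
\]
The same local formula $\tilde f = \tilde t^{m} g$ shows that the strict transform satisfies $\Y' = b^*\Y - m\E$. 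Adjunction then gives
\[
K_{\X'} + \Y' = b^*(K_\X+\Y) + \Bigl(\textstyle\sum_i \uw_i - 1 - m\Bigr)\E .
\]
Restrict this to $\Y'$. The divisor $\E|_{\Y'}$ is nonempty and has codimension one in $\Y'$, since $\Y'\to\Y$ is birational and surjective over $p$. Write $\E|_{\Y'} = \sum_j r_j F_j$ with $r_j>0$. Provided the centre $b(F_j)$ has codimension $\ge 2$ in $\Y$, each $F_j$ is an exceptional divisor over $\Y$, and its discrepancy is at most $r_j(\sum\uw_i - 1 - m)$. The canonical hypothesis says this discrepancy is $\ge 0$, which yields the desired inequality.

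\emph{Main obstacles.} I expect two technical points to be the hardest.

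First, $\Y'$ need not be normal along $\E$, and we are on an orbifold. I would pass to the normalization $\nu: \Y^\nu\to\Y'$. There $K_{\Y^\nu} = \nu^*(K_{\X'}+\Y')|_{\Y'} - C$, with $C\ge 0$ the conductor, so non-normality can only decrease discrepancies and the inequality survives. Because the stabilizers are finite and the coarse map is unramified in codimension one off $\E$, the computation can be done on the coarse space, or equivalently with $\QQ$-divisors.

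Second, one must ensure the relevant $F_j$ really are exceptional over $\Y$. This fails for centres of codimension one, where $\supp\cntr$ is a divisor. For such centres admissibility forces $f$ to be a unit multiple of $x_1$ near $p$, so the point is smooth. This degenerate case (and any point of $\Y$ where $\supp\cntr\cap\Y$ has codimension one in $\Y$) has to be excluded, in accordance with Reid's convention that all weights are positive. I would handle it by applying the argument to the centre of codimension at least two, where $b(F_j)$ has codimension at least two in $\Y$ because $\Y\not\supset\supp\cntr$ generically.
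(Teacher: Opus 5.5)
Your route is the right one. It is Reid's discrepancy argument on the weighted blowup, which is what the paper relies on: it proves the statement only by citing Reid's Theorem~4.1, with Reid's weight vector $\alpha$ corresponding to the weights of $\cntr$ and $\alpha(g)$ to $\ord[\cntr](f)=1$. The following steps are all fine: the reduction to the integral inequality $\sum_i\uw_i\ge m+1$, the formulas $K_{\X'}=b^*K_{\X}+(\sum_i\uw_i-1)\E$ and $\Y'=b^*\Y-m\E$, adjunction, and the conductor correction. You are also right that centres of codimension one must be excluded. There the statement is genuinely false: if $\Y$ is smooth, the unweighted centre defined by $\Y$ is admissible with $\kappa_n=1$. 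So the statement has to be read for centres of codimension at least two. The paper only applies it to centres supported at a Du Val point, where your argument already works.

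The gap is the case of codimension exactly two. Admissibility gives $f\in\wtng[1]\subset\wtng[>0]$, so $\supp\cntr\subset\Y$ for every admissible centre. Your claim that $b(F_j)$ has codimension at least two in $\Y$ ``because $\Y\not\supset\supp\cntr$ generically'' is therefore false as justified. In codimension $\ge 3$ the conclusion still holds, but for the dimension reason $\dim\supp\cntr\le\dim\Y-2$. In codimension two, $\supp\cntr$ is a divisor on $\Y$. Over its generic point, the fibre of $\E\cap\Y'$ is the finite zero set of the leading term of $f$ in a weighted projective line. The corresponding components of $\E|_{\Y'}$ therefore dominate $\supp\cntr$ and are not exceptional, so the canonical-singularity bound you invoke does not apply to them. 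There need not be any other components.

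The repair is easy. $\Y$ is normal, hence smooth at a general point $q\in\supp\cntr$. Since $f$ vanishes on $\supp\cntr$, the nonzero covector $\dd f(q)$ is a combination of the differentials of the two normal coordinates. So some normal coordinate $x_i$ appears linearly in the expansion of $f$ at $q$, which gives $m\le\uw_i$ and hence $\uw_1+\uw_2\ge m+1$. Alternatively, over the generic point of $\supp\cntr$ the induced map of coarse spaces is finite and birational onto the normal variety $\Y$, hence an isomorphism there. Those components then have discrepancy $0$, and your computation still yields the inequality.

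A minor related point: $\Y'$ can be ramified over its coarse space along components of $\E\cap\Y'$ with nontrivial generic stabilizer, contrary to what your remark suggests. This is harmless, because a ramification index $e$ turns a coarse discrepancy $a\ge0$ into the stack coefficient $e(a+1)-1\ge0$.
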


\begin{corollary}\label{prop:ade-noblowup}
     If $\dim \X = 3$ and $\Y\subset \X$ is a surface having only Du Val singularities, then $\kappa_3(\cntr) > 1$ for all $\Y$-admissible centres $\cntr$.
\end{corollary}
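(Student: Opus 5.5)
The plan is to deduce the corollary directly from the preceding proposition (Reid's criterion) by specializing to $n=3$. The only real content is to check that the hypotheses match: a surface in a three-dimensional orbifold is a hypersurface, and Du Val singularities are canonical.

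First, since $\dim \X = 3$ and $\Y$ is a surface, $\Y$ has codimension one, so it is a hypersurface, locally cut out by a single equation in any orbifold chart. Second, Du Val singularities are exactly the two-dimensional canonical singularities (equivalently, the rational double points); this is classical and is the standard characterization recalled in \cite{Reid1980}. Smooth points are trivially canonical, so a surface with only Du Val singularities has only canonical singularities.

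The statement is local in $\X$, because both admissibility and the weight sum are computed in orbifold charts. One could therefore pass to an étale chart to reduce to a hypersurface in affine space, where Reid's theorem is formulated. The proposition above is, however, already stated for orbifolds, so this reduction is not needed. Applying it with $n=3$ gives $\kappa_3(\cntr) > 1$ for every $\Y$-admissible centre $\cntr$.

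The only point needing care is the identification "Du Val $\Leftrightarrow$ canonical in dimension two". I would simply cite it, for example as the well-known fact that canonical surface singularities are precisely the ADE singularities listed in \autoref{tab:ADE}. As a sanity check, for the associated centre this also matches \autoref{prop:small-invariant-ADE}, which computes $\kappa_3>1$ for the invariants listed in the table.
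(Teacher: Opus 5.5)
Your proposal is correct and is exactly the paper's (implicit) argument: the paper states the corollary without a separate proof, as the case $n=3$ of the preceding proposition, using that a surface in a threefold is a hypersurface and that Du Val singularities are canonical. One caveat, which comes from the paper's phrasing rather than from your reasoning: the inequality should be read for centres supported at points (the setting of Reid's criterion, and of the later application at a Du Val point $p$), since for instance the unweighted centre defined by a smooth connected component of $\Y$ is $\Y$-admissible with $\kappa_3=1$.
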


\section{Weighted resolutions of Poisson subvarieties}

\subsection{Poisson subvarieties and triples}

Let $(\X,\ps)$ be a Poisson orbifold.  By a \defn{Poisson subvariety}, we mean a closed subvariety $\Y\subset\X$ to which $\ps$ is tangent; equivalently the Poisson bracket on $\cO{\X}$ descends to a bracket on $\cO{\Y}=\cO{\X}/\cJ$, where $\cJ$ is the defining ideal of $\Y$. We allow $\Y$ to be singular.

\begin{definition}
    A \defn{Poisson triple} is a triple $(\X,\Y,\ps)$ where $\X$ is an orbifold, $\ps$ is a Poisson structure on $\X$, and $\Y\subset \X$ is a closed Poisson subvariety of pure dimension.  The \defn{dimension} of a Poisson triple $(\X,\Y,\ps)$ is the pair $(\dim\X,\dim\Y)$.
\end{definition}

We wish to start with a Poisson triple $(\X,\Y,\ps)$ for which $\Y$ is singular, and repeatedly blow it up to improve its singularities without destroying the Poisson structure.  It is natural to impose the further condition that the exceptional divisor is Poisson, i.e.~that we only blow up conilpotent centres.  In fact, in our constructions below in low dimensions, this will turn out to be automatic: allowing codegenerate centres will not enlarge the class of singularities that can be resolved in those cases.

\subsection{Subvarieties of the vanishing set}
Before proceeding, it will be useful to have a criterion to check that certain centres are conilpotent.

 Namely, a particularly simple sort of Poisson subvariety $\Y\subset \X$ is one that is contained in the vanishing set of the Poisson structure.  Thus $\ps \in \cJ\der[2]{\X}$ where $\cJ < \cO{\X}$ is the defining ideal of $\Y$.  In this case, if $\cntr$ is any centre with corresponding weight sum $\bkappa(\cntr)$, we have
\[
\ord[\cntr]{\ps} \ge \ord[\cntr]{\cJ}-\kappa_2(\cntr).
\]
In particular, we immediately have the following:
\begin{lemma}
If $\cntr$ is $\Y$-admissible and $\kappa_2(\cntr) \leq 1$, then $\cntr$ is conilpotent with respect to any Poisson structure vanishing on $\Y$. 
\end{lemma}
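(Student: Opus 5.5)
The plan is to combine the displayed inequality $\ord[\cntr]{\ps} \ge \ord[\cntr]{\cJ}-\kappa_2(\cntr)$ with the admissibility condition $\ord[\cntr]{\cJ}=1$. Together these give $\ord[\cntr]{\ps}\ge 1-\kappa_2(\cntr)\ge 0$, which is the conilpotence condition of \autoref{def:codegen}. The only real content is therefore the displayed inequality, and I will justify it first.

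Since $\ps$ vanishes on $\Y$, we have $\ps\in\cJ\der[2]{\X}$. Working locally in weighted coordinates $(x_1^{a_1},\ldots,x_n^{a_n})$ for $\cntr$, expand
\[
\ps=\sum_{i<j} f_{ij}\,\cvf{x_i}\wedge\cvf{x_j},
\]
where each coefficient $f_{ij}=\ps(\dd x_i\wedge\dd x_j)$ lies in $\cJ$. Then $\ord[\cntr](f_{ij})\ge\ord[\cntr](\cJ)$, since $\ord[\cntr](\cJ)$ is the infimum of the orders of elements of $\cJ$. By \autoref{lem:polyvect-order}, each $\cvf{x_i}\wedge\cvf{x_j}$ has order at least $-\kappa_2$. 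Additivity of $\ord[\cntr]$ on products, together with the formula for the order of a polyvector as a minimum over its terms, then gives the displayed inequality.

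The conclusion is local on $\X$, and the orders of functions are constant on connected components of the centre, so checking the inequality in charts around each point of $\supp\cntr$ suffices. I do not expect any genuine obstacle here. The one point needing care is that the coefficient $f_{ij}$ belongs to $\cJ$ itself and not merely to its radical: this is exactly what the hypothesis $\ps\in\cJ\der[2]{\X}$ provides, which we take as the meaning of ``vanishing on $\Y$'' (this is automatic since $\Y$ is reduced and $\ps$ vanishes pointwise along it).
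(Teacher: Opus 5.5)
Your proposal is correct and matches the paper's argument. The paper deduces the lemma directly from the inequality $\ord[\cntr](\ps) \ge \ord[\cntr](\cJ) - \kappa_2(\cntr)$, which comes from $\ps \in \cJ\der[2]{\X}$, combined with admissibility $\ord[\cntr](\cJ) = 1$; your coefficient-wise derivation of that inequality via \autoref{lem:polyvect-order} just spells out the step the paper leaves implicit.
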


\begin{corollary}\label{cor:a1-blowup-vanishing}
    Let $a_1$ denote the first entry of the singularity invariant of $(\X,\Y)$.  If $a_1 > 1$,  then the associated centre $\ZCan(\X,\Y)$ is conilpotent with respect to $\ps$.
\end{corollary}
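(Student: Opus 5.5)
The plan is to reduce the statement directly to the preceding lemma. Here $\ps$ is understood to vanish on $\Y$, which is the standing assumption of this subsection. The associated centre $\cntr := \ZCan(\X,\Y)$ is $\Y$-admissible by definition, since it is the maximal $\cJ$-admissible centre (\autoref{defn:assoc-centre}). So the lemma applies as soon as we check the numerical condition $\kappa_2(\cntr) \le 1$.

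To verify this, let $\ba = \inv{\X,\Y} = (a_1 \le a_2 \le \cdots)$, with weights $w_i = 1/a_i$. The weight sequence is weakly decreasing, so $w_2 \le w_1$. This holds also when the length is one, where $w_2 = 0$. Hence
\[
\kappa_2(\cntr) = w_1 + w_2 \le 2 w_1 = \tfrac{2}{a_1}.
\]
By \autoref{ex:inv-starts-integer}, the first entry $a_1$ is an integer, so the hypothesis $a_1 > 1$ gives $a_1 \ge 2$, and therefore $\kappa_2(\cntr) \le 1$.

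The lemma now shows that $\cntr$ is conilpotent. Explicitly, $\ps \in \cJ\der[2]{\X}$ gives
\[
\ord[\cntr]{\ps} \ge \ord[\cntr]{\cJ} - \kappa_2(\cntr) = 1 - \kappa_2(\cntr) \ge 0 .
\]
If one works with the local associated centre at a point instead, the same argument applies verbatim to the local invariant.

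There is no real obstacle here. The only point needing care is the integrality of $a_1$, which rules out values $1 < a_1 < 2$ and is supplied by \autoref{ex:inv-starts-integer}, via \autoref{lem:a1-interp}.
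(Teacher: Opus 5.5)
Your proof is correct and follows essentially the same route as the paper. Both arguments note that $a_1$ is an integer, so $a_1>1$ forces $\kappa_2 = \tfrac{1}{a_1}+\tfrac{1}{a_2}\le \tfrac{2}{a_1}\le 1$, and then apply the preceding lemma on $\Y$-admissible centres for a Poisson structure vanishing on $\Y$.
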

\begin{proof}  
For the associated centre $\ZCan(\X,\Y)$, the first entry $a_1$ is an integer; hence if $a_1 > 1$, we have $\kappa_2 = \frac{1}{a_1}+\frac{1}{a_2} \leq \frac{2}{a_1} \leq 1$. 
\end{proof}
By \autoref{thm:ATW}, part (\ref{it:codim-k-containment}), the condition $a_1=1$ is equivalent to the statement that $\Y$ is contained in a smooth hypersurface.  Equivalently, the Zariski tangent space of $\Y$ at any point $p$ is contained in a hyperplane of $\tb[p]{\X}$.  Hence we may rephrase the previous corollary as follows:
\begin{corollary}\label{cor:tangent-space-conilpotent}
    Suppose $\Y$ is contained in the vanishing locus of $\ps$ and that we have the equality $\tb[p]{\Y} = \tb[p]{\X}$ of Zariski tangent spaces at some point $p \in \Y$. Then the associated centre $\ZCan(\X,\Y)$ is conilpotent with respect to $\ps$.
\end{corollary}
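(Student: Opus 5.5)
The plan is to reduce \autoref{cor:tangent-space-conilpotent} to \autoref{cor:a1-blowup-vanishing}; the only work is to check that the tangent space hypothesis forces $a_1 > 1$. Throughout, $\Y$ lies in the vanishing locus of $\ps$, so the standing hypothesis of that corollary, namely $\ps \in \cJ\der[2]{\X}$ with $\cJ$ the ideal of $\Y$, is in force.

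First, I will relate $a_1$ to Zariski tangent spaces. By \autoref{lem:a1-interp}, $a_1 = a_1(\X,\Y)$ is the maximal unweighted order of vanishing of $\cJ$ at points of $\X$; it is a positive integer by \autoref{ex:inv-starts-integer}. Suppose $a_1 = 1$. Then by \autoref{thm:ATW}(\ref{it:codim-k-containment}), near every point, and in particular near the given $p$, the germ of $\Y$ lies in the germ of a smooth hypersurface $\W$. Hence $\tb[p]{\Y} \subset \tb[p]{\W}$, which is a proper hyperplane of $\tb[p]{\X}$. This contradicts $\tb[p]{\Y} = \tb[p]{\X}$.

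Alternatively, one can argue directly. The equality of tangent spaces means $\cJ_p \subset \fm_p^2$, so the order of $\cJ$ at $p$ is at least $2$. Since $a_1$ is the maximal unweighted order over $\X$, this gives $a_1 \ge 2 > 1$. I would present this direct version, since it avoids the converse direction of \autoref{thm:ATW}. The one point to get right is the direction of the maximum: $a_1$ is a supremum over all points, so a single point of order at least two suffices.

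With $a_1 > 1$ established, \autoref{cor:a1-blowup-vanishing} applies and gives that $\ZCan(\X,\Y)$ is conilpotent with respect to $\ps$. That corollary uses only that $a_1$ is an integer, so $\kappa_2 \le 2/a_1 \le 1$, followed by the preceding lemma with $\ord{\ps} \ge \ord{\cJ} - \kappa_2 \ge 0$.

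There is no real obstacle here; the statement is a rephrasing.
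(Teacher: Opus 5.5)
Your proposal is correct and follows the paper's argument. The paper also treats this as a rephrasing of \autoref{cor:a1-blowup-vanishing}: it notes via \autoref{thm:ATW}(\ref{it:codim-k-containment}) that $a_1=1$ is equivalent to every Zariski tangent space $\tb[p]{\Y}$ lying in a hyperplane of $\tb[p]{\X}$, so the hypothesis forces $a_1>1$. Your direct variant ($\cJ_p\subset\fm_p^2$, hence order at least two at $p$, hence $a_1\geq 2$ by \autoref{lem:a1-interp}) is the same idea, except that it avoids the converse half of \autoref{thm:ATW}.
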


\subsection{Curves in surfaces}  Suppose $(\X,\Y,\ps)$ is a Poisson triple of dimension $(2,1)$.  Then $\ps$ vanishes identically on $\Y$ for dimension reasons, and $\tb[p]{\Y} = \tb[p]{\X}$ at every singular point of $\Y$.  By \autoref{cor:tangent-space-conilpotent}, the associated centre $\ZCan(\X,\Y)$ is conilpotent whenever $\Y$ is singular.  We therefore have the following:

\begin{theorem}
    Let $(\X,\Y,\ps)$ be a Poisson triple with $\dim \X = 2$ and $\dim \Y =1$.  Then the associated centre $\ZCan(\X,\Y)$ is conilpotent.  Hence the algorithm of \cite{Abramovich2024} produces a sequence of weighted blowups along conilpotent centres 
    \[
    (\X',\Y',\ps') \to \cdots \to (\X,\Y,\ps)
    \]
    such that $\Y'$ is smooth.
\end{theorem}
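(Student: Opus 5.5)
The plan is an induction on the invariant, run in parallel with the algorithm of \cite{Abramovich2024}. The preceding discussion already contains the key local input. If $\dim\X=2$ and $\dim\Y=1$, then $\ps\in\coH[0]{\der[2]{\X}}$ is a section of a line bundle. Since $\Y$ is Poisson and one-dimensional, $\ps$ vanishes on $\Y$: the bracket on $\cO{\Y}$ is a bivector on a curve, which is zero at smooth points and hence zero because $\Y$ is reduced. Thus $\ps\in\cJ\der[2]{\X}$. At a singular point $p$ the Zariski tangent space of $\Y$ is two-dimensional, so $a_1(\X,\Y)\ge 2$. By \autoref{cor:a1-blowup-vanishing} (equivalently \autoref{cor:tangent-space-conilpotent}), the associated centre $\ZCan(\X,\Y)$ is therefore conilpotent whenever $\Y$ is singular. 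If $\Y$ is smooth there is nothing to do.

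Next, since conilpotent implies codegenerate, \autoref{thm:poisson-blowup} shows that $\ps$ lifts to a Poisson structure $\ps'=b^*\ps$ on $\X'=\Bl{\X}{\cntr}$, with $\cntr=\ZCan(\X,\Y)$, and that $\ps'$ is tangent to the exceptional divisor $\E$. I must check that the strict transform $\Y'$ is again a Poisson subvariety, so that $(\X',\Y',\ps')$ is a Poisson triple. Away from $\E$, $b$ is a Poisson isomorphism, so $\ps'$ is tangent to $\Y'\setminus\E$. The locus where a bivector is tangent to a reduced subvariety is closed, because the condition $\{\cJ',\cO{}\}\subset\cJ'$ is closed in the closure of the dense open set. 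Since $\Y'$ is the closure of $\Y'\setminus\E$, tangency holds on all of $\Y'$. Alternatively, one can argue directly: $\ps'$ is a bivector on a surface, $\Y'$ is a curve with no components inside $\E$, and any Poisson structure on a surface is tangent to every curve along which it vanishes. So it suffices to see that $\ps'$ vanishes on $\Y'\setminus\E$, which holds because $\ps$ vanishes on $\Y$.

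Finally, I iterate. By \autoref{thm:ATW}(4) the invariant strictly decreases, $\inv{\X',\Y'}<\inv{\X,\Y}$, and by \autoref{thm:ATW}(1) the set $\Inv$ is well ordered. So after finitely many steps the invariant reaches $(1,\infty,\ldots)$, which by \autoref{thm:ATW}(2) means $\Y'$ is smooth. At each intermediate step the triple again has dimension $(2,1)$, and $\ps$ vanishes on $\Y$, so the first paragraph applies again to give conilpotence.

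The only potentially delicate point is the tangency of $\ps'$ to the strict transform, together with the orbifold nature of the blowup. Both should be handled by the closedness and density argument, working étale locally in orbifold charts.
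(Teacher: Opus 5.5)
Your proposal is correct and follows the paper's argument. The paper notes that $\ps$ vanishes on $\Y$ for dimension reasons and that singular points have $\tb[p]{\Y}=\tb[p]{\X}$; you phrase the same fact as $a_1\ge 2$. Conilpotence then follows from \autoref{cor:tangent-space-conilpotent}, which is the same as \autoref{cor:a1-blowup-vanishing}, and the decrease of the invariant under \autoref{thm:ATW} together with well-ordering finishes the proof. Your extra check that the strict transform stays a Poisson subvariety, using density and reducedness, is sound; the paper leaves this step implicit.
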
  

\begin{remark}
The unweighted centre defined by the singular locus $\sing{\Y}$ is also conilpotent; hence ordinary blowup at singular points will also produce a resolution.  This approach has the disadvantage that more blowups will typically be required, but it also has the advantage that if $\X$ is a variety, $\X'$ will also be a variety, so no orbifolds are needed.
\end{remark}

\subsection{Resolution strategy for Poisson threefolds}\label{sec:res-strategy}
In the rest of the paper, we explain how to reduce the singularities of Poisson triples $(\X,\Y,\ps)$ with $\dim \X = 3$. Our strategy is to try, as much as possible, to blow up the associated centre $\ZCan(\X,\Y)$ of the underlying pair, but now there are two important complications.

The first complication is that there will be a ``bad locus'' $\W \subset \Y$ (made precise below), consisting of isolated singularities of $\Y$ where $\ps$ admits no codegenerate centres.  Hence we can never improve the singularities in $\W$ using a weighted blowup without destroying the Poisson structure, and we are forced to focus on improving the singularities of the complementary pair $(\X\setminus \W,\Y\setminus \W)$.  We will therefore set
\[
\inv{\X,\Y,\ps} := \inv{\X\setminus \W,\Y\setminus \W} \leq \inv{\X,\Y}
\]
and try to decrease $\inv{\X,\Y,\ps}$ to the minimal possible value, which corresponds to the case in which $\Y\setminus \W$ is smooth, i.e.~the only singularities of $\Y$ are the ones that can't be improved. 

For instance, we can try to blow up the associated centre of the pair $(\X\setminus\W,\Y\setminus \W)$. However, this leads to the second complication: the centre $\ZCan(\X\setminus\W,\Y\setminus \W)$ need not be conilpotent for $\ps$.  Therefore, at times, we will need to select a different centre that \emph{is} conilpotent, and whose blowup still reduces the invariant.

\subsection{Curves in threefolds}
\label{sec:resolve-curve-threefold}
Let $(\X,\Y,\ps)$ be a Poisson triple of dimension $(3,1)$.  Then $\Y$ is a curve and $\ps$ vanishes identically on $\Y$ for degree reasons.  In particular, if $p \in \sing{\Y}$, then the linearization of the Poisson structure gives a Lie algebra $\fh_p := (\ctb[p]{\X},[-,-])$. 

\begin{definition}
Let $(\X,\Y,\ps)$ be a Poisson triple of dimension $(3,1)$.  A \defn{non-nilpotent point of $(\X,\Y,\ps)$} is a point $p \in \sing{\Y}$ such that the Lie algebra $\fh_p$ is not nilpotent.  We denote by
\[
\Ynn \subset \sing{\Y}
\]
the set of non-nilpotent points.
\end{definition}
The set $\Ynn$ will play the role of the ``bad locus'' $\W$ in the strategy outlined in \autoref{sec:res-strategy}.  The rest of the present subsection is devoted to the proof of the following statements, which describe the geometry of non-nilpotent points, and show that these are the only singularities that cannot be eliminated by blowing up.
\begin{proposition}\label{prop:3-1-normal-form}
    If $p \in \Ynn$, then there exist formal coordinates $(x,y,z) \in \hO_{\X,p}$  and an element $f \in \KK[[y,z]]$ such that 
    \[
    \ps = x \cvf{x}\wedge \cvf{y} \qquad \textrm{and} \qquad \Y = \van{x,f(y,z)}
    \]
    in the formal neighbourhood of $p$ in $\X$.
\end{proposition}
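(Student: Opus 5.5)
The plan is to identify the linearization of $\ps$ at $p$ explicitly, and then apply the first case of \autoref{prop:3d-poisson-germs}. The curve $\Y$ then lands in the vanishing locus of the normal form, which forces the normal form to be exactly linear.

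\emph{Step 1: the linearization.} Let $\cJ<\cO{\X,p}$ be the ideal of $\Y$. Since $\dim \Y=1<2$, the bivector $\ps$ vanishes on $\Y$, so its coefficients $\{x_i,x_j\}$ in any coordinates lie in $\cJ\subset\fm_p$. Their linear parts are exactly the structure constants of $\fh_p$, i.e.\ the elements of $[\fh_p,\fh_p]\subset \ctb[p]{\X}=\fm_p/\fm_p^2$. These linear parts are images of elements of $\cJ$ in $\fm_p/\fm_p^2$, so they vanish on the Zariski tangent space $\tb[p]{\Y}$. Since $p$ is a singular point of a curve, $\dim\tb[p]{\Y}\ge 2$. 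Hence the linear forms in $[\fh_p,\fh_p]$ span a space of dimension at most one, i.e.\ $\dim[\fh_p,\fh_p]\le 1$.

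I then invoke the elementary classification of three-dimensional Lie algebras with derived algebra of dimension at most one. Such an algebra is abelian, Heisenberg, or $\mathfrak{r}_2\oplus\KK$ with bracket $[x,y]=x$ and $z$ central. The first two are nilpotent, so for $p\in\Ynn$ we get linear coordinates $(x,y,z)$ with $\lt{\ps}=x\cvf{x}\wedge\cvf{y}$, after rescaling $y$ to fix the sign. Here the leading term is taken for the unweighted centre at $p$, and it equals the linearization because $\ps(p)=0$. I expect this step to be the main obstacle: one must check carefully that the tangent-space bound really forces $\dim[\fh_p,\fh_p]\le1$, rather than only restricting the ideal generated by the brackets.

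\emph{Step 2: normal form.} By \autoref{prop:3d-poisson-germs}(1), after a formal automorphism either $\ps=x\cvf{x}\wedge\cvf{y}$, or
\[
\ps=\rbrac{x\cvf{x}+\tfrac{z^{k+1}}{1+\lambda z^k}\cvf{z}}\wedge\cvf{y}
\]
for some $\lambda\in\KK$ and $k>0$. In the second case the vanishing ideal of $\ps$ is $(x,z^{k+1})$, whose reduced locus is the $y$-axis. Since $\Y$ is reduced and contained in $\van{\ps}$, it would then be contained in the smooth curve $\van{x,z}$. As $\Y$ has pure dimension one, its germ at $p$ would equal that smooth curve, contradicting $p\in\sing{\Y}$. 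Hence $\ps=x\cvf{x}\wedge\cvf{y}$.

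\emph{Step 3: the curve.} Now $\van{\ps}=\van{x}$, so $x\in\cJ$, and $\Y$ is a reduced curve in the smooth formal surface $\{x=0\}$ with coordinates $(y,z)$. Such a curve is cut out by a single series $f\in\KK[[y,z]]$, because $\KK[[y,z]]$ is a UFD and a pure one-dimensional reduced subscheme of it is principal. Therefore $\cJ=(x,f(y,z))$, which is the claimed normal form.
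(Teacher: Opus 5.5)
Your proposal is correct and follows the paper's argument. The paper likewise bounds $\dim[\fh_p,\fh_p]\le 1$ (\autoref{lem:commutator-dim}), reads off $x\cvf{x}\wedge\cvf{y}$ from the classification, applies \autoref{prop:3d-poisson-germs}(1), and rules out the non-linear normal form because it would force $\Y\subset\van{x,z}$ to be smooth. Your Zariski-tangent-space proof of the bound is a uniform version of the paper's case split on $a_1$, and the worry you flag is unfounded: $[\fh_p,\fh_p]$ is spanned by the differentials $\dd\{f,g\}|_p$, which all lie in the annihilator of $\tb[p]{\Y}$, a space of dimension at most one.
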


\begin{corollary}\label{cor:non-nilp-codegen}
    The non-nilpotent locus contains no $\ps$-codegenerate centres; hence non-nilpotent singularities cannot be improved using a weighted blowup.
\end{corollary}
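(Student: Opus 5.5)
Let $\cntr$ be a $\ps$-codegenerate centre whose support lies in $\Ynn$; we will derive a contradiction. The plan is to reduce to the local model of \autoref{prop:3-1-normal-form} and then invoke the computation of \autoref{ex:curve-non-inflat}.

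First, we pin down the shape of $\cntr$. The set $\Ynn\subset\sing{\Y}$ consists of singular points of the curve $\Y$, so it is a finite set of points. Hence $\supp\cntr$ is zero-dimensional, and $\cntr$ has codimension three at each point $p$ of its support. In particular all three weights $w_1\ge w_2\ge w_3$ of $\cntr$ at $p$ are positive.

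Next, we localize. Codegeneracy is the pair of conditions $\ord[\cntr](\ps)\ge-\gcd\bw$ and $\ord[\cntr](\lt{\ps}\wedge\eul)\ge0$. Both are local at $p$, and they can be checked after passing to an orbifold chart and then to the completion $\hO_{\X,p}$. The filtration $\wtng$ extends to $\hO_{\X,p}$, and orders and leading terms are unchanged under completion, since they only see the associated graded. By \autoref{prop:3-1-normal-form} we may therefore choose formal coordinates $(x,y,z)$ centred at $p$ with $\ps=x\cvf{x}\wedge\cvf{y}$.

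The one subtlety is that these coordinates need not be weighted coordinates for $\cntr$. We therefore run the argument of \autoref{ex:curve-non-inflat} in its coordinate-free form. Let $\epsilon$ be an Euler-like vector field for $\cntr$; by condition ($2'$), codegeneracy forces $\ord[\cntr](\epsilon\wedge\ps)\ge0$. We compute
\[
\epsilon\wedge\ps = x\,\epsilon(z)\,\cvf{x}\wedge\cvf{y}\wedge\cvf{z} \qquad\textrm{and}\qquad \cvf{x}\wedge\cvf{y}\wedge\cvf{z}=(\dd x\wedge\dd y\wedge\dd z)^{-1},
\]
so the trivector on the right has order $-\kappa_3$, since $\dd x\wedge\dd y\wedge\dd z$ generates the top forms. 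On the leading terms, $\lt{\epsilon}(\lt z)=\eul(\lt z)=\ord(z)\,\lt z\neq0$, so $\ord\epsilon(z)=\ord z$.

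Hence $\ord(\epsilon\wedge\ps)=\ord x+\ord z-\kappa_3$. Since $\dd x$ and $\dd z$ are linearly independent covectors at $p$, their orders sum to at most $w_1+w_2=\kappa_2$. This gives
\[
\ord(\epsilon\wedge\ps)\le\kappa_2-\kappa_3=-w_3<0,
\]
which contradicts codegeneracy.

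Finally, the second assertion follows from \autoref{thm:poisson-blowup}(1): $\ps$ lifts to $\Bl{\X}{\cntr}$ only if $\cntr$ is codegenerate. So no weighted blowup centred in $\Ynn$ is compatible with $\ps$.

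The main point of care is the step showing that orders of covectors and of $\epsilon(z)$ behave correctly in non-adapted formal coordinates. This reduces to working in the associated graded, where $\gr$ of the cotangent filtration at $p$ has minimal order sum $\kappa_2$ over any two independent elements.
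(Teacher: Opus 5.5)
Your proposal is correct and follows the paper's proof, which simply combines \autoref{prop:3-1-normal-form} with the computation in \autoref{ex:curve-non-inflat}. You have made explicit the details that the one-line argument leaves implicit: the support is zero-dimensional, one passes to the completion, and the estimate is run with an Euler-like field in coordinates not adapted to $\cntr$. One wording slip: in your final paragraph, $\kappa_2$ is the \emph{maximal} (not minimal) possible sum of the orders of two independent covectors at $p$, and that upper bound is what you actually use.
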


\begin{proof}
    This follows immediately from \autoref{prop:3-1-normal-form} and \autoref{ex:curve-non-inflat}.
\end{proof}

\begin{theorem}\label{thm:3-1-resolve}
    Let $(\X,\Y,\ps)$ be a Poisson triple of dimension $(3,1)$. Then the there exists a sequence of weighted blowups along conilpotent centres 
    \[
    (\X',\Y',\ps') \to \cdots \to (\X,\Y,\ps)
    \]
    such that every singular point of $\Y$ is non-nilpotent, i.e. $\sing{\Y}'=\Ynn'$.  
\end{theorem}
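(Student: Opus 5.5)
We argue by induction on $\inv{\X,\Y,\ps} := \inv{\X\setminus\Ynn,\Y\setminus\Ynn}$, which lives in the well-ordered set $\Inv$ by \autoref{thm:ATW}. If it is minimal, i.e.\ $(1,1,\infty,\ldots)$, then $\Y\setminus\Ynn$ is smooth and there is nothing to prove. Otherwise, at each step we produce a conilpotent centre $\cntr$ supported away from $\Ynn$ such that blowing up strictly decreases the invariant. Because non-nilpotent points are isolated and have the local form of \autoref{prop:3-1-normal-form}, a blowup supported away from them leaves them untouched, so the bad locus of the blown-up triple contains the old one. The induction then terminates.

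Set $\ba=\inv{\X\setminus\Ynn,\Y\setminus\Ynn}$ and let $\cntr=\ZCan(\X\setminus\Ynn,\Y\setminus\Ynn)$. Since this centre is closed in $\X\setminus\Ynn$ and the points of $\Ynn$ are isolated, we may treat it as a centre on $\X$. We split into cases according to $a_1$.

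\textbf{Case $a_1>1$.} \autoref{cor:a1-blowup-vanishing} shows that $\cntr$ is conilpotent, because $\ps$ vanishes on $\Y$. We blow it up, and \autoref{thm:ATW}(4) gives the decrease.

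\textbf{Case $a_1=1$.} Now $\ba=(1,b,c)$ with $b>1$, since $\Y$ is singular off $\Ynn$. Locally near a point $p$ of maximal invariant, $\Y$ lies in a smooth surface $\W=\van{x}$ and $\cntr=(x^1,y^b,z^c)$. We take $\kappa_2=1+1/b$ and must check condition \eqref{eq:cntr-nilp}: $\{x,y\},\{x,z\}\in\wtng[1+w]$ and $\{y,z\}\in\wtng[1/b+1/c]$.

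If the linearization $\fh_p$ is nilpotent, we study its structure. Three-dimensional nilpotent Lie algebras are either abelian or Heisenberg. In the abelian case $\ps$ vanishes to order $2$ at $p$, and a direct weight count shows that $\{f,g\}\in\fm_p^2\subset\wtng[2/c]$. This should suffice when $b\ge 2$, although when $c$ is large compared with $b$ it may fail. In that event we replace $\cntr$ with the $b$-completion $\cntr{}[p,b]$ of the unweighted centre of $\W$ at $p$, i.e.\ $(x^1,y^b,z^b)$. Its blowup restricts to the ordinary blowup of the plane curve $\Y\subset\W$, so \autoref{rmk:abhyankar} (Abhyankar) still gives a decrease of the invariant. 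Conilpotence then reduces to $\ord(\ps)\ge 0$ for weights $(1,1/b,1/b)$, which holds once $\ps\in\fm^2$ and the brackets with $x$ are controlled. For those brackets we use that $\W$ can be chosen well: $\Y\subset\W\cap\van{\ps}$, and any function vanishing on $\Y$ has bracket vanishing on $\Y$.

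In the Heisenberg case we apply \autoref{prop:3d-poisson-germs}(\ref{it:Heis-normal}) to write $\ps=(x'+A)\cvf{y'}\wedge\cvf{z'}+\ldots$ in which the centre of $\fh_p$ is spanned by $\dd x'$. Since $\ps|_\Y=0$ forces $\Y\subset\van{x'+A(f),\ldots}$, we may take $\W$ with $\dd x$ central, so that $\{x,\cdot\}\in\fm^2$. The remaining checks are again weight counts with the $b$-completed centre.

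The step I expect to be the main obstacle is this last one. We must choose the smooth surface $\W$ containing $\Y$, together with a centre, so that every bracket condition in \eqref{eq:cntr-nilp} holds, while still guaranteeing a drop in the invariant via \autoref{thm:ATW} or \autoref{rmk:abhyankar}. In the Heisenberg case we also need to handle whether $\Y$ is tangent to the symplectic direction. If a uniform argument fails, I would fall back on a case analysis using the normal forms of \autoref{prop:3d-poisson-germs}.

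Finally, we check that the blowup creates no new bad points other than the existing non-nilpotent ones. Any new singular point that is non-nilpotent only helps, since it enlarges $\Ynn'$ and so lowers the invariant further.
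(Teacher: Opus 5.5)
Your overall architecture matches the paper's: induction on $\inv{\X\setminus\Ynn,\Y\setminus\Ynn}$, then \autoref{cor:a1-blowup-vanishing} and \autoref{thm:ATW} when $a_1>1$, and an abelian/Heisenberg split with \autoref{rmk:abhyankar} when $a_1=1$. The gap is in the abelian case. Neither the associated centre nor the $b$-completion with $b=a_2\ge 2$ is conilpotent in general, and no choice of $\W$ fixes this.

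Here is a counterexample. Take $\ps=g\,\cvf{x}\wedge\cvf{y}$ with $g=y^2-z^3$, and $\Y=\van{x,g}$. This bivector is Poisson because $z$ is a Casimir. Since $\ps\in g\,\der[2]{\X}$, it vanishes on $\Y$. The linearization at the origin is zero, and the invariant is $(1,2,3)$.
\begin{itemize}
\item For the associated centre $(x^1,y^2,z^3)$ one gets $\ord(\ps)=1-1-\tfrac12<0$.
\item Now take any smooth $\W=\van{x'}\supset\Y$, and write $x'=ux+hg$ with $u$ a unit. Then $\{x',y\}=g\,(u+xu_x+gh_x)$ and $\{x',z\}=\{y,z\}=0$. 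So in the coordinates $(x',y,z)$ the bivector is $\{x',y\}\cvf{x'}\wedge\cvf{y}$. For the weights $(1,\tfrac12,\tfrac12)$ its order is exactly $1-1-\tfrac12=-\tfrac12$.
\end{itemize}
The underlying problem is that knowing a bracket lies in $\cJ$ only gives order $\ge 1$, while \eqref{eq:cntr-nilp} needs order $\ge 1+\tfrac1b$. For the same reason, your remark that $\fm_p^2\subset\wtng[2/c]$ ``should suffice'' is mistaken. The binding condition is $\{x,y\}\in\wtng[1+1/b]$, and $2/c<1+1/b$ always.

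The missing idea is to take completion exponent $b=1$, i.e.\ the ordinary blowup of the point $p$. This is what the paper does on $\Zab$.
\begin{itemize}
\item It is conilpotent: since $\ps\in\fm_p^2\der[2]{\X}$, its order for $(x^1,y^1,z^1)$ is $\ge 2-2=0$.
\item The invariant still drops: the restriction to $\W$ is still the ordinary blowup of $(\W,\Y)$ at $p$, so Abhyankar's result applies via \autoref{rmk:abhyankar}.
\end{itemize}

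In the Heisenberg case, using the $b$-completion of $\van{x+A(f)}$ uniformly is a legitimate variant. The paper instead uses $\ZCan$ when $B(f)\neq 0$, and the $b$-completion only when $B(f)=0$. However, the check is not a mere weight count: knowing $\{x,\cdot\}\in\fm^2$ only gives order $2/b<1+1/b$. You need the normal form of \autoref{prop:3d-poisson-germs} together with two consequences of $\ps|_\Y=0$ that the paper extracts:
\begin{itemize}
\item $x+A(f)\in\cJ$;
\item $B(f)\in\cJ^2$.
\end{itemize}
With these, the coefficients $\cvf{y}B$ and $\cvf{z}B$ have order $\ge 2-\tfrac1b\ge 1+\tfrac1b$, and the $A$-terms have order $\ge 0$.
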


Before proving \autoref{prop:3-1-normal-form} and \autoref{thm:3-1-resolve}, we need a preparatory lemma.
\begin{lemma}\label{lem:commutator-dim}
    Let $(\X,\Y,\ps)$ be a Poisson triple of dimension $(3,1)$, let $p \in \sing{\Y}$ be a singular point of $\Y$, and let $\fh := (\ctb[p]{\X},[-,-])$ be the conormal Lie algebra at $p$.  Then $\dim [\fh,\fh] \leq 1$, and hence the linearization of $\ps$ at $p$ is isomorphic to one of the linear Poisson structures in \autoref{tab:3dLie}.
\end{lemma}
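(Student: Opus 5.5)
The plan is to use the singularity of $\Y$ at $p$ to force many brackets of linear functions to vanish, and then read off the derived algebra. Since $\Y$ is a curve and $\ps$ vanishes on $\Y$, the linearization $\fh$ is a three-dimensional Lie algebra on $\ctb[p]{\X} = \fm_p/\fm_p^2$. The bracket is induced by $\{f,g\} \bmod \fm_p^2$ for $f,g\in\fm_p$.

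The first step is to use that $\Y$ is a Poisson subvariety, so its ideal $\cJ$ is a Poisson ideal. Consider the subspace $V := (\cJ+\fm_p^2)/\fm_p^2 \subset \ctb[p]{\X}$, the conormal directions cut out by linear parts of equations of $\Y$. This is the annihilator of the Zariski tangent space $\tb[p]{\Y}$. Because $\{\cJ,\cO{\X}\}\subset\cJ$, we get $[V,\fh]\subset V$, so $V$ is an ideal of $\fh$.

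The second step uses that $p$ is singular on the curve $\Y$, so $\dim\tb[p]{\Y}\ge 2$ and hence $\dim V\le 1$.

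If $V=0$, then $\tb[p]{\Y}=\tb[p]{\X}$. Here I would use instead that $\ps$ vanishes on $\Y$, so every bracket $\{f,g\}$ lies in $\cJ$. Since $\cJ\subset\fm_p^2$ when $V=0$, all brackets vanish modulo $\fm_p^2$. Thus $\fh$ is abelian and $[\fh,\fh]=0$.

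If $\dim V=1$, choose a generator $x$ of $V$ lifting to some element of $\cJ$. Again $\ps$ vanishes on $\Y$, so $\{f,g\}\in\cJ$ for all $f,g$, and hence every bracket in $\fh$ lies in $(\cJ+\fm_p^2)/\fm_p^2 = V$. Therefore $[\fh,\fh]\subset V$ has dimension at most one.

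In both cases this gives the bound. The key observation is really that $\ps\in\cJ\der[2]{\X}$ forces $[\fh,\fh]\subset V$; the Poisson-ideal property is not even needed beyond this.

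The final step is classificatory. Three-dimensional Lie algebras with derived algebra of dimension $\le 1$ are: the abelian one, the Heisenberg algebra, and $\mathfrak{r}_2\oplus\KK$ (the nonabelian two-dimensional algebra plus a line). This follows from the Bianchi classification, or directly. Write $[\fh,\fh]=\KK x$ and $[u,v]=\beta(u,v)x$ with $\beta$ a skew form on $\fh$. Either $x$ is central, in which case $\beta$ has rank two and we get Heisenberg. Or $x$ is not central, in which case $[x,\cdot]$ is a nonzero functional; normalize $[x,y]=x$ and pick $z$ in its kernel with $\beta(y,z)=0$ after adjusting $z$ by a multiple of $x$. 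This gives $\mathfrak{r}_2\oplus\KK$.

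Their linear Poisson structures are $0$, $x\cvf{y}\wedge\cvf{z}$ and $x\cvf{x}\wedge\cvf{y}$, presumably the entries of \autoref{tab:3dLie}. The only mildly delicate point is this normal-form normalization over algebraically closed $\KK$, which is routine linear algebra.
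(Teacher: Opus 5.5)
Your proposal is correct and follows essentially the same route as the paper, which also uses $\ps\in\cJ\der[2]{\X}$ to confine all brackets to the linear part of $\cJ$. The paper splits into the cases $a_1>1$ (so $\cJ\subset\fm_p^2$, your $V=0$) and $a_1=1$ (where it writes $\cJ=(x,g(y,z))$ with $\dd g|_p=0$, your $\dim V=1$), then cites the Bianchi classification; your coordinate-free phrasing via the Zariski conormal space $V$ and your explicit normalization of the Lie algebras are a slightly cleaner packaging of the same argument.
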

\begin{proof}
Let $a_1$ be the first entry of the invariant of $(\X,\Y)$ at $p$.  If $a_1 > 1$ then the ideal $\cJ$ defining $\Y$ vanishes to order two, and since $\ps \in \cJ\der[2]{\X}$ we deduce that the linearization of $\ps$ is zero.  Otherwise, $a_1=1$, so that $\cJ$ vanishes to order one.  Hence there exist coordinates $(x,y,z)$  such that $\cJ = (x,g(y,z))$ for some function $g$ that vanishes to order at least two at the origin, i.e.\ $\Y$ is identified with the curve $g=0$ in the $yz$-plane.  Since $\ps|_\Y=0$, we have
\[
\ps = x \ps_0 + g \ps_1
\]
for some $\ps_0,\ps_1 \in \der[2]{\X,p}$. Since $\dd g|_p=0$, it follows that $[\fh,\fh]$ is contained in the linear subspace spanned by $\dd x|_p \in \ctb[p]{\X}$, and hence $\dim [\fh,\fh] \leq 1$, as claimed.  The classification in \autoref{tab:3dLie} now follows immediately from the Bianchi classification of three-dimensional Lie algebras.
\end{proof}

\begin{table}
\caption{Three-dimensional Lie algebras $\fh$ with $\dim[\fh,\fh]\leq 1$}\label{tab:3dLie}
    \begin{tabular}{c|c|c|c}
        $\dim[\fh,\fh]$ & Nilpotent? & Name & Linear Poisson structure \\
        \hline
        0 & yes & abelian & $0$ \\
        1 & yes & Heisenberg & $x\cvf{y}\wedge\cvf{z}$ \\
        1 & no & split nonabelian & $x\cvf{x}\wedge\cvf{y}$
    \end{tabular}
\end{table}

\begin{proof}[Proof of \autoref{prop:3-1-normal-form}]
According to \autoref{tab:3dLie}, the linearization of $\ps$ at a point $p \in \Ynn$ is isomorphic to
\[
\pslin = x\cvf{x}\wedge\cvf{y}.
\]
By \autoref{prop:3d-poisson-germs}, there are formal coordinates $(x,y,z)$ such that
\[
\ps = (x\cvf{x}+g(z)\cvf{z})\wedge\cvf{y}
\]
for some $g\in z^2\KK[[z]]$.  Since $\ps|_\Y=0$, we must have $\Y \subset \van{x,g}$.  If $g \neq 0$ then $g = z^j \tilde g(z)$ for some $j > 0$ and $\tilde g \in \KK[[z]]$ such that $\tilde g(0)\neq 0$.  Thus $\Y \subset \van{x,z}$, which implies that $\Y$ is smooth, contradicting the assumption that $p$ is a singular point.  We conclude that $g=0$ and hence $\ps = \logcvf{x}\wedge\cvf{y}$. Then $\Y$ lies in the plane $x=0$, so that it is given by equations of the form $x = f(y,z) = 0$, as claimed.
\end{proof}

\begin{definition}
For a Poisson triple $(\X,\Y,\ps)$ of dimension $(3,1)$, we denote by 
\begin{align}
\inv{\X,\Y,\ps} := \inv{\X\setminus \Ynn,\Y\setminus \Ynn} \label{eq:31-inv-def}
\end{align}
the invariant of the pair obtained from $(\X,\Y)$ be removing all non-nilpotent points of $(\X,\Y,\ps)$.
\end{definition}
Note that
\begin{align}
\inv{\X,\Y,\ps} \leq \inv{\X,\Y} \label{eq:pois-inv-compare}
\end{align}
by \autoref{rmk:local-invariant}, applied to the open $\U = \X\setminus \Ynn$.  Furthermore, $\Y\setminus \Ynn$ is smooth if and only if its invariant is $(1,1)$ (the minimal possible invariant for a curve in a threefold), i.e.~we have the following:
\begin{lemma}\label{lem:nn-inv}
We have $\sing{\Y} = \Ynn$ if and only if $\inv{\X,\Y,\ps}=(1,1)$.
\end{lemma}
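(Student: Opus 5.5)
The statement is essentially a translation of \autoref{thm:ATW}, part (2), into the setting with the non-nilpotent locus removed. The plan has three steps.

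First, I will verify that removing $\Ynn$ does not change the codimension.

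\begin{itemize}
\item Non-nilpotent points are singular points of the curve $\Y$, so by \autoref{prop:3-1-normal-form} they are isolated.
\item Hence $\Ynn$ is a closed set of dimension zero, or at worst a finite set of points locally.
\item Consequently $\U := \X\setminus\Ynn$ is an open suborbifold, and $\Y\setminus\Ynn$ is a closed subvariety of $\U$ of pure dimension one, i.e.\ of pure codimension two.
\item The degenerate case $\Y\setminus\Ynn=\emptyset$ cannot occur, because $\Y$ is a curve and $\Ynn$ is zero-dimensional.
\end{itemize}

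Second, I will apply \autoref{thm:ATW}, part (2), to the pair $(\U,\Y\cap\U)$. It says that $\Y\setminus\Ynn$ is smooth if and only if $\inv{\U,\Y\cap\U}=(1,1)$. By definition \eqref{eq:31-inv-def}, this invariant is $\inv{\X,\Y,\ps}$.

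Third, I will observe that $\Y\setminus\Ynn$ is smooth exactly when $\sing{\Y}\subset\Ynn$. Since $\Ynn\subset\sing{\Y}$ by definition, this inclusion is equivalent to the equality $\sing{\Y}=\Ynn$, which gives the stated equivalence.

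The only point needing care is the first step: $\Ynn$ must be closed so that $\U$ is open, and the dimension of $\Y\setminus\Ynn$ must stay pure of dimension one. Closedness follows because a finite set of closed points is closed. Alternatively, it follows because nilpotence of the linearized Lie algebra $\fh_p$ is an open condition along $\sing{\Y}$, and $\sing{\Y}$ is finite anyway for a reduced curve. So nothing beyond bookkeeping is expected to be an obstacle.
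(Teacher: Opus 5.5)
Your proposal is correct and matches the paper's argument: apply \autoref{thm:ATW}(2) to the pair $(\X\setminus\Ynn,\Y\setminus\Ynn)$, then use $\Ynn\subset\sing{\Y}$ to turn ``$\Y\setminus\Ynn$ smooth'' into $\sing{\Y}=\Ynn$. One correction to a side remark: your ``alternative'' closedness argument has the direction wrong, since in families nilpotence of a Lie algebra is a closed condition, not an open one; this does no harm, because neither it nor the appeal to \autoref{prop:3-1-normal-form} is needed, as $\sing{\Y}$ is discrete simply because $\Y$ is a reduced curve, so every subset of it is closed.
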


Let $\Z \subset \Y \setminus \Ynn$ be the locus of maximal invariant; note that since the singularities of $\Y$ are isolated, $\Z$ is a finite set. According to \autoref{lem:commutator-dim}, we have a decomposition $\Z = \Zab \sqcup \ZHeis$ into subsets where the linearization is either abelian, or the Heisenberg algebra, respectively.  Define a centre $\cntr = \cntr(\X,\Y,\ps)$ supported on $\Z$ by assigning weights to each point $p \in \Z$ as follows.  The construction treats several cases differently, in order to produce a centre that simultaneously is conilpotent, and  reduces the invariant upon blowing up.

Let $a_1=a_1(\X,\Y,\ps)$ be the first entry of $\inv{\X,\Y,\ps}$.
\begin{itemize}
    \item If $a_1>1$, set $\cntr(\X,\Y,\ps) = \ZCan(\X\setminus\Ynn,\Y\setminus \Ynn)$.
    \item If $a_1=1$, let $\cntr = \cntr(\X,\Y,\ps)$ be the centre defined in the following way:
    \begin{itemize}
        \item If $p\in\Zab$, then $\cntr\vert_p$ is the unweighted centre defined by $p$.
        \item If $p\in \ZHeis$ and $\van{\ps}$ has dimension one at $p$, then by \autoref{prop:3d-poisson-germs} part (\ref{it:Heis-normal}), there exist formal coordinates $(x,y,z)$ in which the Poisson structure has the form
        \[
        \ps = (x+A(f))\cvf{y}\wedge\cvf{z} + [B(f),\cvf{x}\wedge\cvf{y}\wedge\cvf{z}]
        \]
        with $f \in (y,z)^2\KK[[y,z]]$ and $A,B \in f \KK[[f]]$. There are two possibilities:
        \begin{itemize}
            \item $\van{\sigma}$ has dimension one; equivalently $B(f)\neq 0$. Then we take $\cntr\vert_p := \ZCan(\X,\Y)|_p$ to be the associated centre.
            \item $\van{\sigma}$ is a smooth surface near $p$; equivalently, $B(f)=0$. Then let $b = a_2(\X,\Y,\ps)$ and take $\cntr\vert_p := \van{\ps}[p,b]$ to be the $b$-completion of the unweighted centre defined by $\van{\ps}$, in the sense of \autoref{sec:b-completion}.  In coordinates as above, $\cntr|_p$ is given by $((x+A(f))^1,y^b,z^b)$.        \end{itemize}
    \end{itemize}
\end{itemize}

\autoref{thm:3-1-resolve} now follows immediately by combining  the following result with \autoref{lem:nn-inv} and the well-orderedness of the set of invariants.
\begin{proposition}\label{prop:3-1-invariant-decrease}
    The centre $\cntr = \cntr(\X,\Y,\ps)$ is conilpotent, and 
    \begin{align}
    \inv{\Bl{\X,\Y,\ps}{\cntr}} < \inv{\X,\Y,\ps} \label{eq:poisson-blowup-inequality}
    \end{align}
\end{proposition}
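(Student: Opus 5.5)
We treat the cases in the definition of $\cntr(\X,\Y,\ps)$ separately, checking conilpotence via \autoref{thm:poisson-blowup} and the coordinate criterion \eqref{eq:cntr-nilp}. Invariant decrease will reduce either to \autoref{thm:ATW}(4) or to \autoref{rmk:abhyankar}. Because non-nilpotent points are isolated and disjoint from $\supp\cntr$, the blowup is an isomorphism near $\Ynn$. So it suffices to show that every point of the strict transform lying over $\Z$ has invariant below the maximum $\inv{\X,\Y,\ps}$, and that no new non-nilpotent points can raise the invariant. Since $\inv{\X,\Y,\ps}$ is computed after removing $\Ynn'$, new non-nilpotent points can only help.

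\textbf{Case $a_1>1$.} Conilpotence is \autoref{cor:a1-blowup-vanishing}, applied on $\X\setminus\Ynn$; this uses $\ps\in\cJ\der[2]{\X}$, which holds since $\dim\Y=1$. Invariant decrease is \autoref{thm:ATW}(4) for the pair $(\X\setminus\Ynn,\Y\setminus\Ynn)$, combined with \autoref{rmk:local-invariant}.

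\textbf{Case $a_1=1$, $p\in\Zab$.} Here $\Y$ lies in a smooth surface $\W$ near $p$, and the invariant is $(1,b,c)$ with $(b,c)=\inv{\W,\Y}$. The unweighted point centre is conilpotent because the linearization vanishes, so $\ord\ps\ge 0$ for weights $(1,1,1)$. For the decrease, note that the unweighted point centre is the $b$-completion of the unweighted centre of $\W$ only after rescaling, so \autoref{rmk:abhyankar} does not apply verbatim. Instead we observe directly that the ordinary blowup of $\X$ at $p$ restricts on the strict transform of $\W$ to the ordinary blowup of $\W$ at $p$. Abhyankar's result then gives that $(b,c)$ drops, and \autoref{thm:ATW}(3) transports this to the threefold invariant.

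\textbf{Case $p\in\ZHeis$ with $B(f)\neq 0$.} Decrease is again \autoref{thm:ATW}(4). Conilpotence is the real content. We must check \eqref{eq:cntr-nilp} for $\ZCan(\X,\Y)$, whose weights are $(1,1/b,1/c)$ in coordinates adapted to $\cJ=(x',g)$. The brackets are $\{y,z\}=x+A(f)$, $\{z,x\}=B_y$ and $\{x,y\}=B_z$, and $\ps$ must vanish on $\Y$. The plan is to show that $\Y\subset\van{\ps}$ forces $x+A(f)\in\cJ$, so that $x'=x+A(f)$ may serve as the order-one generator. Then $\{y,z\}$ has order $\ge 1\ge 1/b+1/c$, because $b\ge 2$ (singularity). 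For $\{x',y\}$ we use that $B_y,B_z$ vanish on $\Y$ and, being components of the Jacobian, have order at least $1-1/c$ in the weights $(1/b,1/c)$ of the plane curve. We would like the tighter bound $1-1/c\ge 1+1/b-\dots$, which the target $\{x',y\}\in\wtng[1+1/b]$ requires. This bound is what I expect to be the main obstacle. I would try to handle it by computing with the leading term of $B(f)$ in the plane curve's centre, using that $\Y\subset\van{B_y,B_z}$ makes $f$ divide the relevant ideal.

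\textbf{Case $p\in\ZHeis$ with $B=0$.} Here $\ps=(x+A(f))\cvf{y}\wedge\cvf{z}$ vanishes on the smooth surface $\van{x'}$, with $x'=x+A(f)$, and $\Y$ lies in it. The centre $(x'^1,y^b,z^b)$ is the $b$-completion of the unweighted centre $\van{x'}$, so decrease follows from \autoref{rmk:abhyankar}. For conilpotence, $\{y,z\}=x'$ has order $1\ge 2/b$, and the remaining brackets $\{x',\cdot\}$ are multiples of $x'$, hence of order $\ge 1+1/b$. So \eqref{eq:cntr-nilp} holds.
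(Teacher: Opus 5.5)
You have a genuine gap exactly where you flagged it. Your preamble, the case $a_1>1$, the abelian case and the case $B(f)=0$ follow the paper's argument. But conilpotence at $p\in\ZHeis$ with $B(f)\neq 0$ is never established, and the facts you plan to use cannot establish it. Knowing only that $\partial_yB(f),\partial_zB(f)$ lie in $\cJ$ gives order $\ge 1$ for those coefficients. However, \eqref{eq:cntr-nilp} requires $\{x',y'\}\in\wtng[1+1/b]$ and $\{x',z'\}\in\wtng[1+1/c]$. Your formula for $\{x',y\}$ also drops the term $\{A(f),y\}=-x'\,\partial_zA(f)$. From $A(f)\in(y,z)^2$ alone, that term has order only $\ge 1+\tfrac1c$, which is too small when $b<c$. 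So you need control of $A(f)$ as well as of $B(f)$.

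The missing idea is that $B(f)$ vanishes to \emph{second} order along $\Y$. The condition $\ps|_\Y=0$ puts $\partial_yB(f)$ and $\partial_zB(f)$ in $\cJ$, and $\partial_xB(f)=0$, so $\dd B(f)$ vanishes on $\Y$. Since $B(f)$ also vanishes at $p$, it vanishes on every branch of $\Y$; in fact $B(f)\in\cJ^2$, so $\ord[\cntr]B(f)\ge 2$. Next, write $B(f)=f^j\tilde B(f)$ with $j>0$ and $\tilde B$ a unit. Then $f^j\in\cJ$, so $f\in\cJ$ because $\cJ$ is radical. Hence $A(f)\in\cJ$, and therefore $x\in\cJ$.

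Now use $x$ itself as the maximal contact coordinate, and change only $y,z$ so that the associated centre is $(x^1,y^b,z^c)$ with $2\le b\le c$. Both terms of
$\ps=(x+A(f))\cvf{y}\wedge\cvf{z}+[\cvf{x}\wedge\cvf{y}\wedge\cvf{z},B(f)]$
then have order at least $1-\tfrac1b-\tfrac1c\ge 0$, which is conilpotence. If you prefer to keep $x'=x+A(f)$ and check \eqref{eq:cntr-nilp} bracket by bracket, the same two facts do the work. Namely, $\ord[\cntr]B(f)\ge 2$ and $\ord[\cntr]A(f)\ge 1$ are exactly what put $\{x',y'\}$ and $\{x',z'\}$ in the required filtration pieces.

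Two small slips elsewhere. In the case $B(f)=0$, being a multiple of $x'$ only gives order $\ge 1$. The bound $1+\tfrac1b$ needs the cofactors $\partial_yA(f),\partial_zA(f)$ to lie in $(y,z)$, which holds because $A(f)\in(y,z)^2$. In the abelian case, the unweighted point centre is not a rescaling of the $b$-completion. It is simply the $1$-completion of the unweighted centre of the smooth surface containing $\Y$, which is how the paper invokes \autoref{rmk:abhyankar}. Your direct argument via the strict transform of that surface is fine regardless.
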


\begin{proof}
Let $a_1$ be the first entry of $\inv{\X,\Y,\ps}$, and let $\cntr = \cntr(\X,\Y,\ps)$.  We first prove that the invariant decreases as in \eqref{eq:poisson-blowup-inequality}. For this, let $\X' = \Bl{\X}{\cntr}$ with blowdown map $b : \X' \to \X$ and let $\Y'$ be the strict transform of $\Y$.  Then since $\Ynn \cap \cntr = \varnothing$, the locus $\Ynn'$ is the disjoint union of the preimage $b^{-1}(\Ynn)$ and the set of non-nilpotent points contained in the exceptional divisor.  Hence
\begin{align*}
\inv{\X',\Y',\ps'} &= \inv{\X'\setminus \Ynn',\Y'\setminus \Ynn'} \\
&\leq \inv{\X'\setminus b^{-1}(\Ynn),\Y'\setminus b^{-1}(\Ynn)} \\
&= \inv{\Bl{\X\setminus\Ynn,\Y\setminus\Ynn}{\cntr}},
\end{align*}
where the inequality in the second line follows from \autoref{rmk:local-invariant} applied to the open $\U = \X'\setminus \Ynn' \subset \X' \setminus b^{-1}(\Ynn)$ and the equality in the third line follows from the isomorphism $\X'\setminus b^{-1}(\Ynn) \cong \Bl{\X\setminus \Ynn}{\cntr}$.  It therefore suffices to show that 
\begin{align*}
\inv{\Bl{\X\setminus \Ynn,\Y\setminus \Ynn}{\cntr}} < \inv{\X\setminus\Ynn,\Y\setminus \Ynn}
\end{align*}
This is indeed true.  Namely, for each point $p \in \cntr$ there are two possibilities.  The first is that we blow up the associated centre $\ZCan|_p$, in which case \autoref{thm:ATW} applies.  The second possibility is that $a_1=1$, and we perform a blowup of a $b$-completion of a surface $\W$ containing the germ of $\Y$ at $p$, in which case \autoref{rmk:abhyankar} applies.  (This second possibility encompasses both the case of ordinary blowups for $p \in \Zab$, for which $b=1$, and the case $p \in \ZHeis$ with $\W = \van{\ps}$ a surface.)

It remains to show that $\cntr$ is conilpotent.  If $a_1 > 1$, this follows from \autoref{cor:a1-blowup-vanishing}, so assume that $a_1=1$.  At any point $p\in \Zab$, the unweighted centre is conilpotent by \autoref{ex:unweighted}, so it suffices to treat the case in which the linearization is the Heisenberg algebra. 

So, suppose $p\in \ZHeis$. By \autoref{prop:3d-poisson-germs}, we may assume without loss of generality that there exist $f \in \KK[[y,z]]$ vanishing at the origin, and $A(f),B(f) \in f\KK[[f]]$ such that
\begin{align}
\ps &= (x+A(f))\cvf{y}\wedge\cvf{z} + [\cvf{x}\wedge\cvf{y}\wedge\cvf{z},B(f)] \label{eq:3d-form1}  \\
&= (x+A(f))\cvf{y}\wedge\cvf{z} - (\cvf{y}B(f))\cvf{x}\wedge\cvf{z} + (\cvf{z}B(f))\cvf{x}\wedge\cvf{y}\label{eq:3d-form}
\end{align}
If $\van{\ps}$ is a curve, then by definition, $\cntr\vert_p$ is the associated centre at $p$. Moreover, $B(f)$ must be nonzero.  Since $\ps$ vanishes on $\Y$, we must have $$(x+A(f),\cvf{y}B(f),\cvf{z}B(f)) \subset \cJ,$$
where $\cJ$ is the ideal defining $\Y$. In particular, $\dd B$ vanishes identically on $\Y$. Since $B(f)|_p =0$, we conclude that $B(f)$ vanishes to order two on $\Y$, i.e.\ $B(f) \in\cJ^2$. Since $B(f)$ is nonzero, we may write $B(f) = f^j\tilde B(f)$ where $j > 0$ and $\tilde B(f)$ is a unit.  Hence $f^j \in\cJ$, and since $\cJ$ is radical we must have $f \in \cJ$.  Therefore $A(f) \in \cJ$ and hence $x \in \cJ$ as well.   In particular, $\ord[\cntr] B(f) \ge 2\ord[\cntr]\cJ = 2$ and $\ord[\cntr] A(f) \ge \ord[\cntr]\cJ = 1$. Moreover $x$ gives a maximal contact parameter, and hence after a change of variables involving only $y$ and $z$, we may assume without loss of generality that $\Z|_p$ is defined by the coordinates $(x^1,y^b,z^c)$ with $2 \leq b \leq c$.  Then each term in \eqref{eq:3d-form1} has order at least $1-\tfrac{1}{b}-\tfrac{1}{c} \ge 0$.

If $\van{\ps}$ is a surface, i.e.~$B(f) = 0$, then by definition $\cntr\vert_p = (u^1,v^{a_2},w^{a_2})$ where $u = x+A(f)$ and $(v,w) = (y,z)$. Hence the Poisson structure \eqref{eq:3d-form1} has the form
\begin{align*}
\ps &= u \cvf{v}\wedge\cvf{w} + u\cvf{v}A(f) \cvf{u}\wedge\cvf{w} - u\cvf{w}A(f) \cvf{u}\wedge\cvf{v} \\
&= u \cvf{v}\wedge\cvf{w} + u \cvf{u} \wedge [\cvf{v}\wedge\cvf{w},A(f)].
\end{align*}
Since $A(f) \in (y,z)^2 = (v,w)^2$, we have $\ord[\cntr] A(f)\geq \frac{2}{a_2}$, and we immediately obtain $\ord[\cntr]\sigma \ge 0$.
\end{proof}

\subsection{Surfaces in threefolds}
\label{sec:resolve-surface-threefold}
Finally, we treat Poisson triples of dimension $(3,2)$.  The argument is parallel to the case of dimension $(3,1)$ but the details are different.

\begin{definition}
    A \defn{Du Val point} of a $(3,2)$-dimensional Poisson triple $(\X,\Y,\ps)$ is a point at which the surface $\Y$ has a Du Val singularity, and the Poisson structure $\ps$ has an isolated zero.  We denote by 
    \[
    \YDV \subset \sing{\Y} 
    \]
    the set of Du Val points.
\end{definition}
Note that the condition depends on both the singularity of the surface and the Poisson structure; for instance if $\ps$ is identically zero then there are no Du Val points of the triple $(\X,\Y,\ps)$, even if the surface $\Y$ itself has Du Val singularities.  Indeed, it is a sort of nondegeneracy condition:  the statement that $p$ is an isolated zero of $\ps$ is equivalent to the statement that $\ps$ induces a trivialization of the canonical bundle of $\Y$ in a neighbourhood of $p$.  

We begin this section by developing an explicit normal form for the Poisson structure near a Du Val point.  The first simplification is provided by the following result of Fraenkel, which is established using a Koszul complex argument:
\begin{proposition}[{\cite[Proposition 4.1]{Fraenkel2013}}]\label{prop:fraenkel}
    If $(\X,\Y,\ps)$ is a Poisson triple of dimension $(3,2)$ and $p$ is an isolated singular point of $\Y$, then in any system of coordinates $(x,y,z)$ centred at $p$, the germ of $\ps$ at $p$ has the form
  \[
  \ps_p = g [\cvf{x}\wedge\cvf{y}\wedge\cvf{z},f] + f \eta
  \]
  for some function $g \in \cO{\X,p}$,  and some bivector $\eta \in \der[2]{\X,p}$.
\end{proposition}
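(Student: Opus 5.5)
The plan is to translate the tangency condition into a statement about $1$-forms, and then use exactness of a Koszul complex on the local ring of $\Y$. Since $\Y$ is a surface in a threefold, its ideal at $p$ is principal, generated by a reduced $f \in \cO{\X,p}$. Contraction with the covolume $\mu := \cvf{x}\wedge\cvf{y}\wedge\cvf{z}$ gives an $\cO{\X,p}$-linear isomorphism from $1$-forms to bivectors, $\alpha \mapsto \hook{\alpha}\mu$, and we have $[\mu,f] = \hook{\dd f}\mu$. So we write $\ps_p = \hook{\alpha}\mu$ for a unique $1$-form $\alpha$. The claim is then equivalent to
\[
\alpha \in \cO{\X,p}\,\dd f + f\,\coT{\X,p},
\]
since $\hook{f\beta}\mu = f\,\hook{\beta}\mu$ and every bivector has the form $\hook{\beta}\mu$.

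\textbf{Step 1: the tangency condition.} That $\Y$ is a Poisson subvariety means $\{f,h\}\in (f)$ for all $h$, i.e.\ $\hook{\dd f}\ps_p \in f\,\cT{\X,p}$. Up to sign, $\hook{\dd f}\hook{\alpha}\mu$ corresponds under the analogous contraction isomorphism (from $2$-forms to vector fields) to $\dd f\wedge\alpha$. Hence the condition becomes
\[
\dd f\wedge \alpha \in f\,\Omega^2_{\X,p}.
\]
Reducing modulo $f$, the class $\bar\alpha \in \Omega^1_{\X,p}\otimes\cO{\Y,p}$ satisfies $\dd f\wedge\bar\alpha = 0$. This is a cocycle condition in degree one of the Koszul complex
\[
\cO{\Y,p} \xrightarrow{\dd f\wedge} \cO{\Y,p}^{3} \xrightarrow{\dd f\wedge} \cO{\Y,p}^{3} \xrightarrow{\dd f\wedge} \cO{\Y,p},
\]
which is the free Koszul complex on the sequence $(\bar f_x,\bar f_y,\bar f_z)$ over $\cO{\Y,p}$.

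\textbf{Step 2: exactness in degree one (the main point).} The Koszul cohomology $H^i$ vanishes for $i$ below the grade of the ideal $I := (\bar f_x,\bar f_y,\bar f_z) \subset \cO{\Y,p}$. This ideal cuts out the singular locus of $\Y$, which near $p$ is just $\{p\}$ by hypothesis, so $I$ is primary to the maximal ideal. Since $\cO{\Y,p}$ is a hypersurface ring, it is Cohen--Macaulay of dimension two. Therefore $\operatorname{grade} I = \operatorname{ht} I = 2$, and $H^1 = 0$. This is where isolatedness and the Cohen--Macaulay property are used, and it is the only non-formal input.

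\textbf{Step 3: conclusion.} By Step 2, $\bar\alpha = \bar g\,\dd f$ for some $\bar g\in\cO{\Y,p}$. Lifting $\bar g$ to $g \in \cO{\X,p}$ gives $\alpha - g\,\dd f = f\beta$ for some $1$-form $\beta$. Applying $\hook{(-)}\mu$ yields
\[
\ps_p = g\,[\mu,f] + f\eta, \qquad \eta := \hook{\beta}\mu.
\]
The argument is coordinate-independent apart from the choice of $\mu$, and changing coordinates only rescales $\mu$ by a unit, which can be absorbed into $g$ and $\eta$. Hence the conclusion holds in any coordinate system centred at $p$. The étale or formal local nature of orbifold charts causes no issue, since all the steps are statements about the local ring.
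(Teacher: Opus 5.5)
Your argument is correct. Translating tangency into $\dd f\wedge\alpha\in f\,\Omega^2$ and then using the vanishing of degree-one Koszul cohomology for the $\fm$-primary Jacobian ideal in the two-dimensional Cohen--Macaulay ring $\cO{\Y,p}$ is exactly the kind of Koszul complex argument the paper attributes to Fraenkel; the paper itself gives no proof beyond the citation.
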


We now use Pichereau's work~\cite{Pichereau2009} on deformations of Jacobian Poisson structures to show that in the case of Du Val points, the function $f$ in \autoref{prop:fraenkel} can be put into a normal form, and we can furthermore take $\eta=0$.  Note that in part \eqref{it:DV-hgns} of the following proposition, the centre $\cntr$ may differ from the associated centre of the Du Val singularity, in accordance with \autoref{rmk:D-is-different}.

\begin{proposition}\label{prop:3-2-normal-form}
Let $(\X,\Y,\ps)$ be a Poisson triple of dimension $(3,2)$, let $p \in \sing{\Y}$ be a singular point of $\Y$.  Then the following are equivalent:
\begin{enumerate}
\item \label{it:DV} $p$ is a Du Val point of $(\X,\Y,\ps)$
\item  \label{it:DV-Jac}  There exist formal coordinates $(x,y,z) \in \hO_{\X,p}$ and a unit $g \in \KK[[x,y,z]]^\times$ such that
    \[
    \ps = g[\cvf{x}\wedge\cvf{y}\wedge\cvf{z},f] \qquad \textrm{and} \qquad \Y = \van{f}
    \]
    where $f\in \KK[x,y,z]$ is an equation for a Du Val singularity as in \autoref{tab:ADE}.  
    \item \label{it:DV-hgns} There exists a centre $\cntr$ supported at $p$ with weighted coordinates $(x^a,y^b,z^c)$, and a quasi-homogeneous polynomial $f \in \KK[x,y,z]$ giving an equation for a Du Val singularity as in \autoref{tab:ADE}, such that   
    \[
    \lt[\cntr]{\ps} = [\cvf{x}\wedge\cvf{y}\wedge\cvf{z},f].
    \]
    \end{enumerate}
\end{proposition}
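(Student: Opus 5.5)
We prove the cycle (\ref{it:DV})$\Rightarrow$(\ref{it:DV-hgns})$\Rightarrow$(\ref{it:DV-Jac})$\Rightarrow$(\ref{it:DV}).

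\textbf{(\ref{it:DV-Jac})$\Rightarrow$(\ref{it:DV}).} This is the easy implication. By \autoref{tab:ADE}, $\Y=\van{f}$ has a Du Val singularity at $p$. The zero locus of $\ps=g[\cvf{x}\wedge\cvf{y}\wedge\cvf{z},f]$ is $\van{f_x,f_y,f_z}$, because $g$ is a unit. Since $f$ has an isolated critical point, this zero is isolated.

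\textbf{(\ref{it:DV})$\Rightarrow$(\ref{it:DV-hgns}).} Choose coordinates in which the local equation of $\Y$ is a quasi-homogeneous Du Val equation $f$ from \autoref{tab:ADE}. Let $\cntr$ be the centre $(x^a,y^b,z^c)$ for which $f$ has weighted order one. For types $A$ and $E$ this is the associated centre. For type $D_n$ we use the exponents $(2,2+\tfrac{2}{n-2},n-1)$ of \autoref{rmk:D-is-different}, so that $f$ is quasi-homogeneous. In every case $\kappa_3(\cntr)>1$ by \autoref{prop:ade-noblowup}.

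By \autoref{prop:fraenkel}, $\ps=h[\mu,f]+f\eta$ with $\mu=\cvf{x}\wedge\cvf{y}\wedge\cvf{z}$. The term $h[\mu,f]$ has order $\ord(h)+1-\kappa_3$. The term $f\eta$ has order at least $1-\kappa_2(\cntr)$, which is strictly greater than $1-\kappa_3$.

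Suppose $h(p)\neq 0$. Then the leading term of $\ps$ is $h(p)[\mu,f]$, since $[\mu,f]$ is homogeneous and nonzero. Rescaling a coordinate (equivalently, replacing $f$ by a scalar multiple and renormalizing) removes the constant $h(p)$, which gives (\ref{it:DV-hgns}).

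Suppose instead $h(p)=0$. Then $\ps\in \fm\cdot(f_x,f_y,f_z)\der[2]{}+f\der[2]{}$. We claim the zero locus of $\ps$ then contains a curve, contradicting isolatedness. More robustly, we argue that the isolated-zero condition forces the induced volume form on $\Y_{\mathrm{reg}}$ to extend as a trivialization of $\omega_\Y$ at $p$. For a Du Val singularity, $\omega_\Y$ is generated by the residue of $\mu^{-1}/f$. The function $h|_\Y$ measures $\ps$ against this generator, so $h(p)\neq0$ is exactly the nondegeneracy. Making this step precise, i.e.\ showing that isolatedness of the zero forces $h|_\Y(p)\neq 0$, is one place where care is needed. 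We would do this via the Koszul/Fraenkel description: if $h(p)=0$, then the zero locus of $\ps$ contains $\van{h}\cap\Y$, which is a curve through $p$.

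\textbf{(\ref{it:DV-hgns})$\Rightarrow$(\ref{it:DV-Jac}).} This is the main obstacle. Set $\ps_0=[\mu,f]$ and use the dg Lie algebra $\fg(\ps_0,\cntr)$ with \autoref{prop:MC-simplify}. Pichereau~\cite{Pichereau2009} computes the Poisson cohomology of Jacobian structures of weight-homogeneous isolated singularities. In degree two, it is spanned modulo coboundaries by the classes $\phi\,\ps_0$ and $[\mu,\phi]$ for $\phi$ ranging over the Milnor algebra $\KK[x,y,z]/(f_x,f_y,f_z)$, together with the $f$-multiples.

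The strategy is to build a subalgebra $\fh$ spanned by elements of the form $u[\mu,f+\phi]$ with $\phi$ in the Milnor algebra, of positive weight relative to $f$. We then check the $\coH[1]$-surjectivity and $\coH[2]$-injectivity on graded pieces. The key numerical input is $\kappa_3>1$: every monomial in the Milnor algebra of a Du Val singularity has weighted degree below $1$, so the positive-order part of the versal deformation vanishes. Hence every positive-order Maurer--Cartan class is absorbed into a multiple $u\,\ps_0$, and $\ps$ is equivalent to $u[\mu,\tilde f]$ with $\lt{\tilde f}=f$.

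Since $\tilde f$ and $f$ then define the same simple singularity (simple singularities are determined by their quasi-homogeneous leading term), a further coordinate change brings $\tilde f$ to $f$ times a unit. Writing $\tilde f=vf$ gives $[\mu,vf]=v[\mu,f]+f[\mu,v]$. The unwanted $f[\mu,v]$ term has to be absorbed, and this is where we expect the most work: we would handle it with a second application of \autoref{prop:MC-simplify}, in the style of the Whitney umbrella argument in \autoref{prop:whitney-normal-form}.

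Finally, $\Y=\van{f}$ holds because $\Y$ is the unique Poisson surface through $p$ in the zero locus of the Casimir $f$. This completes (\ref{it:DV-Jac}).
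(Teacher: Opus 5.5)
\textbf{Gap in (3)$\Rightarrow$(2).} Your implications (2)$\Rightarrow$(1) and (1)$\Rightarrow$(3) are correct and are the paper's argument. They use Fraenkel's decomposition $\ps=h[\mu,f]+f\eta$, the fact that $h(p)=0$ would make $\van{f,h}\subset\van{\ps}$ of dimension at least one at $p$, and the estimate $\ord(f\eta)\ge 1-\kappa_2>1-\kappa_3$. The canonical-bundle detour is unnecessary.

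The gap is in (3)$\Rightarrow$(2), which you outline rather than prove. Your ``subalgebra spanned by elements $u[\mu,f+\phi]$'' is not a linear subspace of $\fg^1$, let alone a filtered dg Lie subalgebra. So \autoref{prop:MC-simplify} cannot be applied to it, and its $\mathsf{H}^1$-surjectivity and $\mathsf{H}^2$-injectivity hypotheses are never checked. The weighted degree bound on the Milnor algebra does not replace this step. That bound concerns unfoldings of $f$, whereas $\gr\fg$ has nonzero positive-order cohomology: for $A_2$, $f=x^2+y^2+z^3$, the cocycle $z\ps_0$ is not exact. What turns a Maurer--Cartan element into Jacobian form is Pichereau's theorem \cite[Proposition 3.6]{Pichereau2009}: $\ps$ is isomorphic to $\tilde g[\mu,\tilde f]$ with $\lt{\tilde f}=f$. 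The paper simply invokes it; you should do the same or actually carry out the cohomological argument.

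\textbf{The final step.} Your last step stalls on the term $f[\mu,v]$ only because you use contact equivalence $\tilde f\sim vf$. The missing idea is right equivalence. Every monomial of the Milnor algebra of $f$ has weighted degree at most $3-2\kappa_3<1$. Hence the semi-quasihomogeneous function $\tilde f$ satisfies $\tilde f\circ\phi=f$ exactly for some formal automorphism $\phi$, by Arnold's normal form theorem; this is what the paper means by ``Du Val singularities are invariant under deformation''. Since $[h\mu,F]=h[\mu,F]$ for functions $h,F$, writing $\phi^*\mu=J\mu$ gives $\phi^*(\tilde g[\mu,\tilde f])=(\tilde g\circ\phi)J\,[\mu,f]$, and $(\tilde g\circ\phi)J$ is a unit. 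No second Maurer--Cartan argument is needed.
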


\begin{proof}

The implication \eqref{it:DV-Jac} $\implies$ \eqref{it:DV} is immediate.  We will prove the implications  \eqref{it:DV} $\implies$ \eqref{it:DV-hgns}, and \eqref{it:DV-hgns} $\implies$ \eqref{it:DV-Jac}.

To see that \eqref{it:DV} implies \eqref{it:DV-hgns}, suppose that $p$ is a Du Val point.  Choose coordinates $(x,y,z)$ in which $\Y$ is given by the vanishing of a  polynomial $f$ as in \autoref{tab:ADE}. Let $(a\leq b \leq c < \infty)$ be an exponent sequence for which $f$ is quasi-homogeneous of order one (see \autoref{rmk:D-is-different}), and let $\cntr$ be the centre defined $(x^a,y^b,z^c)$.
    
  Since the singularity of $\Y$ is isolated, by \autoref{prop:fraenkel}, we have
  \[
  \ps = g [\cvf{x}\wedge\cvf{y}\wedge\cvf{z},f] + f \eta
  \]
  for some germ of a function $g \in \cO{\X,p}$,  and some bivector $\eta \in \der[2]{\X,p}$. Hence the vanishing locus of $\ps$ contains the vanishing locus $\van{f,g}$. If $g$ were not a unit in $\cO{\X,p}$, then $p$ would lie in $\van{f,g}$, and the latter would have dimension at least one, which would contradict the assumption that $\ps$ has an isolated zero.   Hence $g$ is a unit, i.e.~the constant $\lambda := g(p)$ is nonzero and hence $\ord[\cntr]{g} =0$ and $\lt[\cntr]{g} = \lambda$.  Since $\ord[\cntr] \lt{\ps} < 0$, we can arrange that $\lambda=1$ by rescaling the variables by a suitable constant.
  
Since $\ord[\cntr]{\der[2]{\X}} = -\kappa_2(\cntr)$, and since $c < \infty$, we have $\kappa_2(\cntr) < \kappa_3(\cntr)$.  Therefore
 \[
 \ord[\cntr](f \eta) \ge 1 - \kappa_2 > 1 - \kappa_3 = \ord[\cntr](g[\cvf{x}\wedge\cvf{y}\wedge\cvf{z},f])
 \]
and we deduce that
  \[
  \lt[\cntr]{\ps} =  [\cvf{x}\wedge\cvf{y}\wedge\cvf{z},f]
  \]
  as desired.
  
Finally, to see that \eqref{it:DV-hgns} implies \eqref{it:DV-Jac}, assume that $\lt[\cntr]{\ps}$ has the given form and and view $\ps$ as a deformation of $\lt[\cntr]{\ps}$ as in \autoref{sec:normal-form-leading-term}.  By \cite[Proposition 3.6]{Pichereau2009}, $\ps$ is  isomorphic to a Poisson structure of the form $\tilde g[\cvf{x}\wedge\cvf{y}\wedge\cvf{z},\tilde f ]$ for some $\tilde f$ with $\lt{\tilde f} = f$.  Since the class of Du Val singularities is invariant under deformation, we can find a change of variables that takes $\tilde f$ to $f$, and this puts $\ps$ in the desired form.
\end{proof}

The normal form described in \autoref{prop:3-2-normal-form} implies that Du Val points can never be eliminated by blowing up, even if we allow the possibility that the exceptional divisor is not Poisson:
\begin{corollary}
If $p$ is a Du Val point, then there are no $\ps$-codegenerate centres supported at $p$.   
\end{corollary}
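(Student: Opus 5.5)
The plan is to combine the Jacobian normal form of \autoref{prop:3-2-normal-form} with the computation of \autoref{ex:jacobian}, and then to use the fact that Du Val singularities are canonical, in the form of \autoref{prop:ade-noblowup}.

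First I reduce to a Jacobian structure in the coordinates of an arbitrary centre. Let $\cntr$ be a centre supported at the Du Val point $p$, with weighted coordinates $(x_1^{a_1},x_2^{a_2},x_3^{a_3})$ centred at $p$. Since $\supp\cntr = \{p\}$, it has codimension three, so all $a_i$ are finite. By \autoref{prop:3-2-normal-form} there are formal coordinates $(x,y,z)$ with $\ps = g[\cvf{x}\wedge\cvf{y}\wedge\cvf{z},f]$, where $g$ is a unit and $\Y = \van{f}$. The trivector $\cvf{x}\wedge\cvf{y}\wedge\cvf{z}$ equals $u\,\mu'$, where $\mu' = \cvf{x_1}\wedge\cvf{x_2}\wedge\cvf{x_3}$ and $u$ is the inverse of the Jacobian determinant, hence a unit. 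Since $[u\mu',f] = \hook{\dd f}(u\mu') = u[\mu',f]$, we get
\[
\ps = h\,[\mu',f]
\]
for the unit $h = gu$. The completion does no harm here, because orders and leading terms with respect to $\cntr$ may be computed in the formal completion.

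Next I repeat the argument of \autoref{ex:jacobian} with the unit $h$ included. Let $\eul$ be the weighted Euler vector field. Then $\eul\wedge\ps = h\,\eul(f)\,\mu'$. Put $\lambda := \ord[\cntr] f$; this is positive because $f(p)=0$. Then $\eul(f)$ has leading term $\lambda\,\lt{f}$, so it has order exactly $\lambda$. Since $\lt{h} = h(p)$ is a nonzero constant, we have $\lt{\ps} = h(p)[\lt{\mu'},\lt{f}]$, whose order is $\lambda - \kappa_3$. This leading term satisfies
\[
\lt{\ps}\wedge\eul = h(p)\,\lambda\,\lt{f}\,\cvf{\dot x_1}\wedge\cvf{\dot x_2}\wedge\cvf{\dot x_3},
\]
which is nonzero of order $\lambda - \kappa_3(\cntr)$. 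Hence the second condition of codegeneracy in \autoref{def:codegen} holds if and only if $\lambda \ge \kappa_3(\cntr)$.

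Finally I show this inequality always fails. Rescale the filtration by setting $\cntr' := \cntr[\lambda\bullet]$, whose weights are those of $\cntr$ divided by $\lambda$. Then $\ord[\cntr'] f = 1$. Since the ideal of $\Y$ near $p$ is principal, generated by $f$, the centre $\cntr'$ is $\Y$-admissible, at least after restricting to a neighbourhood of $p$ in which $\Y$ has only Du Val singularities. The weight sums satisfy $\kappa_3(\cntr') = \kappa_3(\cntr)/\lambda$. \autoref{prop:ade-noblowup} gives $\kappa_3(\cntr') > 1$, that is, $\kappa_3(\cntr) > \lambda$. Thus $\ord(\lt{\ps}\wedge\eul) < 0$, so $\cntr$ is not codegenerate.

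I expect the main obstacle to be bookkeeping rather than a conceptual gap. One must check two things: that passing between the normal-form coordinates and the weighted coordinates of $\cntr$ changes $\ps$ only by a unit factor, and that Reid's criterion, which is stated globally, applies to the local germ. The key inputs are the normal form of \autoref{prop:3-2-normal-form} and the canonicity bound of \autoref{prop:ade-noblowup}.
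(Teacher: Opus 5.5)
Your proposal is correct and follows the paper's route: the Jacobian normal form of \autoref{prop:3-2-normal-form}, the computation of \autoref{ex:jacobian}, and the canonicity bound of \autoref{prop:ade-noblowup}. The paper's two-line proof leaves two points implicit, and you supply both: you absorb the unit and the Jacobian-determinant factor into $h$, and you rescale an arbitrary centre at $p$ by $\lambda=\ord[\cntr]{f}$ so that it becomes $\Y$-admissible, which is legitimate because codegeneracy is unchanged by such rescaling.
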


\begin{proof}
    From part \eqref{it:DV-Jac} of \autoref{prop:3-2-normal-form}, we deduce that at a Du Val point $p$, the Poisson structure is locally Jacobian (up to a nonvanishing factor).  Hence by  \autoref{ex:jacobian}, a $\Y$-admissible centre at $p$ is codegenerate if and only if it has weight sum $\kappa_3 \leq 1$, but by \autoref{prop:ade-noblowup}, such centres do not exist.
\end{proof}

In the rest of this subsection, we will prove the following result, which shows that by blowing up, we can remove any singularity that is not a Du Val point.

\begin{theorem}\label{thm:3-2-resolve}
    Let $(\X,\Y,\ps)$ be a Poisson triple of dimension $(3,2)$.  Then there exists a sequence of weighted blowups along conilpotent centres 
    \[
    (\X',\Y',\ps') \to \cdots \to (\X,\Y,\ps)
    \]
    such that $\sing{\Y}' = \YDV'$, i.e.~the only singular points of $\Y'$ are Du Val points of the triple $(\X',\Y',\ps')$.
\end{theorem}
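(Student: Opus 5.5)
The plan is to mirror the proof of \autoref{thm:3-1-resolve}. First define $\inv{\X,\Y,\ps} := \inv{\X\setminus\YDV,\Y\setminus\YDV}$. Du Val points are isolated singularities of $\Y$, so $\YDV$ is a finite set of closed points and the complement is open. Then $\sing{\Y} = \YDV$ if and only if $\inv{\X,\Y,\ps} = (1)$. Next, for every triple with $\inv{\X,\Y,\ps} > (1)$, construct a conilpotent centre $\cntr = \cntr(\X,\Y,\ps)$, disjoint from $\YDV$, such that $\inv{\Bl{\X,\Y,\ps}{\cntr}} < \inv{\X,\Y,\ps}$. The comparison argument at the start of the proof of \autoref{prop:3-1-invariant-decrease} carries over verbatim: new Du Val points on the exceptional divisor can only lower the invariant. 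Well-orderedness of $\Inv$ (\autoref{thm:ATW}) then gives termination.

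The centre is chosen by cases on the maximal invariant $\ba = \inv{\X,\Y,\ps}$, supported on the locus $\Z$ where it is attained. By \autoref{ex:jacobian} and \autoref{prop:fraenkel}, near an isolated singular point we have $\ps = g[\mu,f] + f\eta$. For an admissible centre this gives $\ord\ps \ge \min(\ord g + 1-\kappa_3,\ 1-\kappa_2)$. Whenever $\kappa_2 \le 1$ and $\kappa_3 \le 1$, the associated centre is therefore conilpotent.

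\emph{Case (i): $\ba \ge (2,3,6)$.} By \autoref{lem:canonical-numerics}, $\kappa_3 \le 1$, and $\kappa_2 \le 1$ holds automatically since $a_1 \ge 2$. So $\ZCan$ is conilpotent by the estimate above. For non-isolated singular loci I would instead use the general fact that a Poisson surface has $\ps$ tangent to $\Y$. Concretely, $\ps \in [\mu,f] \cdot \cO{} + f\der[2]{}$ up to Koszul corrections, which holds generically, and I would extend it by a codimension argument.

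\emph{Case (ii): $\ba < (2,3,6)$.} By \autoref{prop:small-invariant-ADE}, the singularities at maximal invariant are Du Val, normal crossings, or Whitney umbrellas.
\begin{itemize}
\item At a Du Val singularity of $\Y$ that is not a Du Val point of the triple, $\ps$ has a non-isolated zero. In the Fraenkel form, $g$ is then a non-unit, which raises $\ord\ps$ by $\ord g > 0$. I would show that $\ord g \ge$ the minimal weight $w_3$, or find a modified weighted centre $(x^a,y^b,z^{c'})$ using the vanishing curve of $g$, making the centre conilpotent while still decreasing the invariant.
\item At normal crossings $(2,2)$, the singular locus is a smooth curve with $\kappa_2 = 1$. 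Here the associated centre has $\kappa_3 = \kappa_2 = 1$, so $\ord\ps \ge 0$ directly.
\item At Whitney umbrellas, the invariant is $(2,3,3)$ at the pinch point and $(2,2)$ along the double curve. At the pinch point, if $\ps$ has a non-isolated zero, \autoref{prop:whitney-normal-form} puts $\ps$ in the form $u(\ps_0 + WA(W)\cvf{y}\wedge\cvf{z})$. By \autoref{ex:whitney-conilp}, $(x^2,y^3,z^3)$ is not codegenerate, so I would instead blow up the unweighted $z$-axis $(x^1,y^1,z^\infty)$, i.e.\ the double curve, which is conilpotent. This is also conilpotent for the correction term, since $\ord(W\cvf{y}\wedge\cvf{z}) = 0$. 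It resolves the umbrella, lowering the invariant. If $\ps$ has an isolated zero at the pinch point, then $\ps = g[\mu,W]$ with $g$ a unit, which I would argue is impossible since the double curve lies in $\van{\ps}$.
\end{itemize}

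The main obstacle is the non-isolated case in (i) and the non-Du-Val-point Du Val singularities in (ii). There one must produce a centre that is both conilpotent and invariant-decreasing, when the associated centre has $\kappa_3 > 1$. I would first attempt to prove, via the Koszul/Fraenkel decomposition, that a non-isolated zero of $\ps$ forces $\ord\ps \ge \ord g + 1 - \kappa_3 \ge 0$ for $\ZCan$ itself. If that fails, I would fall back on a $b$-completion of the unweighted centre along the curve $\van{f,g}$, as in \autoref{rmk:abhyankar}.
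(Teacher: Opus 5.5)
\textbf{Verdict: genuine gap.} Your scaffolding matches the paper's: the invariant $\inv{\X,\Y,\ps}$ of the non-Du Val locus, the comparison with the blowup of $\X\setminus\YDV$, the unweighted blowup of the double curves when the maximal invariant is $(2,3,3)$, and termination by well-ordering. What fails is your proof that $\ZCan(\X,\Y)$ is conilpotent at non-isolated singular points.

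Your only tool there is the decomposition $\ps=g[\mu,f]+f\eta$ of \autoref{prop:fraenkel}, with $\mu=\cvf{x}\wedge\cvf{y}\wedge\cvf{z}$. It requires an isolated singularity, and it genuinely fails otherwise, so ``holds generically, extend by a codimension argument'' cannot rescue it. Here is an example. For $W=x^2-y^2z$ take $\ps=E\wedge\xi$ with $E=\tfrac12x\cvf{x}+\tfrac13y\cvf{y}+\tfrac13z\cvf{z}$ and $\xi=y\cvf{y}-2z\cvf{z}$. These vector fields commute, $E(W)=W$ and $\xi(W)=0$. Hence $\ps$ is Poisson, $\hook{\dd W}\ps=W\xi$, and $\van{W}$ is a Poisson surface. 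A Fraenkel decomposition would force $\xi=\hook{\dd W}\eta$ for some bivector $\eta$. But the $\cvf{y}$-coefficient of any $\hook{\dd W}\eta$ lies in $(x,y^2)$, which does not contain $y$ at any point of the $z$-axis.

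The obstruction is a class in the first Koszul homology of $\dd W$, nonzero along the whole singular curve, so no Hartogs-type argument removes it. Consequently your case (i) is unproved wherever the maximal-invariant locus meets non-isolated singularities. Your fallback, a $b$-completion along $\van{f,g}$, has no invariant-decrease mechanism: \autoref{rmk:abhyankar} concerns curves in a surface, not surfaces in a threefold. The same objection hits your normal-crossings bullet. There $\ord(\ps)\ge 0$ does not follow from $\kappa_2=1$; it follows because $\ps$ is tangent to both branches, hence generated by $x\cvf{x}$, $y\cvf{y}$, $\cvf{z}$.

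The missing idea is the leading-term analysis of \autoref{prop:non-conilp-leading-term}, which needs no isolatedness. Degenerate to the weighted normal bundle, where $\lt{\ps}$ and the (possibly non-reduced) equation $f$ are quasi-homogeneous; let $f_0$ be the reduced equation. Vector fields tangent to $\Y$ preserve the associated centre, so they have order $\ge 0$. The homogeneous vector fields $f^{-1}\hook{\dd f}\lt{\ps}$ and $f_0^{-1}\hook{\dd f_0}\lt{\ps}$ would have order $\ord(\ps)<0$, so they must vanish. Thus $\lt{\ps}$ is a Koszul $2$-cycle for $\dd f_0$. Since the critical locus of $f_0$ has dimension at most one, $\lt{\ps}$ is a multiple of the Jacobian structure of $f_0$. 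Non-conilpotence then forces $\kappa_3(\van{f_0})>1$, i.e.\ a Du Val or Whitney-umbrella leading term by \autoref{prop:small-invariant-ADE}.

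Deforming back gives \autoref{prop:nonconilp-du-val-whitney}, using \autoref{prop:3-2-normal-form} and \autoref{prop:whitney-normal-form}. This yields the dichotomy: $\ZCan(\X,\Y)$ fails to be conilpotent only at Du Val points of the triple or at Whitney umbrella points. With this dichotomy, your case (i) holds at every point, isolated or not.

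Two smaller points:
\begin{enumerate}
\item Your other flagged obstacle is actually easy. At a Du Val singularity that is not a Du Val point, $g(p)=0$, since otherwise \autoref{prop:3-2-normal-form} makes $p$ a Du Val point. So $\ord(g)\ge w_3$, and hence $\ord(\ps)\ge 1-\kappa_2\ge 0$.
\item At a pinch point you cannot invoke \autoref{prop:whitney-normal-form} without first knowing that the leading term for $(x^2,y^3,z^3)$ is the Jacobian one. You do not need it: conilpotence of the unweighted centre on the double curve is a closed condition. It can therefore be checked at generic points of that curve, which are normal crossings, as the paper does.
\end{enumerate}
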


The proof hinges on the following two propositions, which characterize the situations in which the associated centre of the pair $(\X,\Y)$ is not conilpotent for $\ps$.  In the first proposition, we analyze the quasi-homogeneous leading term, and in the second, we use this information to treat the general case.

\begin{proposition}\label{prop:non-conilp-leading-term}
Let $(\X,\Y,\ps)$ be a Poisson triple of dimension $(3,2)$, and let $\cntr = \ZCan(\X,\Y)$ be the associated centre of the pair $(\X,\Y)$.  If $\cntr$ is not $\ps$-conilpotent, then the leading term $\lt[\cntr]{\ps}$ is isomorphic to the Jacobian Poisson structure associated to either a Du Val singularity or a Whitney umbrella.
\end{proposition}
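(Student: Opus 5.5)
We work locally at a point $p$ in the support of $\cntr = \ZCan(\X,\Y)$, in an orbifold chart. Let $f$ be a local equation for $\Y$, and choose weighted coordinates $(x^a,y^b,z^c)$ for $\cntr$, with $a \le b \le c \le \infty$. Admissibility gives $\ord[\cntr](f) = 1$. The plan is to show that non-conilpotence forces $\kappa_3 > 1$. We will then identify the leading term using \autoref{prop:small-invariant-ADE} and a tangency argument.

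\textbf{Step 1: constraints on $\ps$.} Since $\Y$ is a Poisson subvariety of dimension two, $\ps$ is tangent to $\Y$. Hence $\ps(\dd f) \in (f)\der[1]{\X}$, i.e.\ $\hook{\dd f}\ps = f\alpha$ for some vector field $\alpha$. Write $\mu = \cvf{x}\wedge\cvf{y}\wedge\cvf{z}$ and $\ps = \hook{\beta}\mu$ for a one-form $\beta$. Then the tangency condition says $\dd f \wedge \beta \equiv 0 \pmod f$.

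If $a = 1$, then $\Y$ is smooth, so $\cntr$ has no singular points to worry about; more to the point, $a_1 = 1$ makes $\Y$ smooth in dimension $(3,2)$. We may therefore assume $a \ge 2$. We first treat the case where $c = \infty$ or $\kappa_3 \le 1$, and aim to show that $\cntr$ is conilpotent there.

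Using $\ord(\ps) = \ord(\beta) - \kappa_3$, conilpotence amounts to $\ord(\beta) \ge \kappa_3$. Pass to leading terms on $\nb{\cntr}$. Let $f_0 = \lt{f}$, of weight $1$, and let $\beta_0 = \lt{\beta}$. If $\ord \beta < \kappa_3$, then the leading term $\ps_0 = \lt{\ps}$ is a nonzero quasi-homogeneous Poisson bivector tangent to $\van{f_0}$, i.e.\ $\dd f_0 \wedge \beta_0 \equiv 0 \pmod{f_0}$.

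In dimension three a quasi-homogeneous Poisson structure tangent to a surface of this kind should be, up to a factor, Jacobian: $\beta_0 = g\,\dd f_0 + f_0\gamma$, via Fraenkel's Koszul argument (\autoref{prop:fraenkel}) applied in the graded setting when $f_0$ has isolated singularities. The general case requires an integrability and de Rham division argument. Weight counting then gives $\ord \beta_0 \ge 1$ if $g$ is nonzero of non-negative weight, and $\ord(f_0\gamma) \ge 1 + \min \ord(\dd x_i) \ge 1 + w_3$. So $\ord\beta < \kappa_3$ forces $\kappa_3 > 1$ and $g$ a nonzero constant. Hence $\lt{\ps}$ is a nonzero multiple of the Jacobian structure $[\mu,f_0]$, which rescaling makes exactly Jacobian.

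\textbf{Step 2: identification.} Now $\kappa_3 > 1$ with $a \ge 2$. By \autoref{lem:canonical-numerics} and \autoref{prop:small-invariant-ADE}, the singularity is $A_n$, $D_n$, $E_{6,7,8}$, a Whitney umbrella, or normal crossings ($\ba = (2,2)$). The normal crossings case has $c = \infty$, so $\kappa_3 = \kappa_2 = 1$, which was excluded. The remaining task is to check that $f_0 = \lt[\cntr]{f}$ itself is the standard quasi-homogeneous equation, up to weighted coordinate change. For $A$, $E$, and the Whitney umbrella, the invariant determines $f_0$ as the table equation (\autoref{rmk:D-is-different}); for $A$ this is $x^2+y^2+z^{n}$.

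For $D_n$ with invariant $(2,3,3)$, $f_0$ is a cubic-weight form $x^2 + h_3(y,z)$, where $h_3$ is a binary cubic. When $h_3$ has distinct roots, this is $D_4$. When $h_3$ has a double root, this is the Whitney umbrella $x^2 - y^2z$. A triple root would give invariant $(2,3,\infty)$, which is excluded. So the leading term is Jacobian for a Du Val equation or for the Whitney umbrella.

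\textbf{Main obstacle.} The hard step is the division statement in Step 1 when $f_0$ has non-isolated singularities, where Fraenkel's proposition does not directly apply. There I would try using $[\ps_0,\ps_0] = 0$ (so $\beta_0 \wedge \dd\beta_0 = 0$) together with the Euler field: $\hook{\eul}$ applied to the identity produces $\eul\wedge\ps_0$ in terms of $f_0$. This yields a bound on $\ord(\eul\wedge\ps_0)$ which, together with $\kappa_3 \le 1$, should give conilpotence directly.
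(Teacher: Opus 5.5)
\textbf{There is a genuine gap.} Your Step~1 proves that $\lt{\ps}$ is Jacobian only when $f_0=\lt{f}$ has an isolated singularity, and that case is far too small. It misses:
\begin{itemize}
\item every centre with $c=\infty$, since then $f_0$ is singular along the whole zero section;
\item the Whitney umbrella;
\item every $D_n$ with $n>4$, whose leading term for $(x^2,y^3,z^3)$ is $x^2+y^2z$;
\item every case where $f_0$ is non-reduced.
\end{itemize}
So the ``division statement'' you postpone is the actual content of the proposition, and your sketch for it does not work. Contracting $\dd f_0\wedge\beta_0\in f_0\,\Omega^2$ with $\eul$ only yields $\beta_0(\eul)\in(f_{0,\red})$. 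That is a lower bound on $\ord(\eul\wedge\ps_0)$, i.e.\ the second codegeneracy condition, not conilpotence. Since $\eul$ has order zero, when $\ord\ps_0<0$ this merely gives $\eul\wedge\ps_0=0$. Then $\ps_0=\eul\wedge V$ for some vector field $V$ of negative order tangent to $\van{f_0}$, and nothing in your argument excludes this. For non-reduced $f_0=h^k$ the bound is only $\tfrac1k-\kappa_3$, so $\kappa_3\le1$ does not even give that much.

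\textbf{The missing idea} is that the associated centre is canonical. It is preserved by formal automorphisms of the germ of $\Y$, hence by flows of vector fields tangent to $\Y$. So every vector field tangent to $\Y$ has order $\ge 0$, and after degenerating to the weighted normal bundle the same holds for $\van{f_0}$ and its reduction. The homogeneous vector fields $f_0^{-1}\hook{\dd f_0}\ps_0$ and $f_{0,\red}^{-1}\hook{\dd f_{0,\red}}\ps_0$ are tangent and have order $\ord\ps_0<0$, so they vanish. Hence $\ps_0$ is a cycle in the Koszul complex of $\dd f_{0,\red}$. That complex is exact in degree two because a reduced surface has critical locus of dimension at most one. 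Therefore $\ps_0=\hook{\dd f_{0,\red}}(g\,\cvf{x}\wedge\cvf{y}\wedge\cvf{z})$ with $g$ homogeneous, with no isolatedness hypothesis needed.

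From here the argument closes uniformly:
\begin{itemize}
\item $\dd f_0$ and $\dd f_{0,\red}$ both lie in the generically rank-one kernel of $\ps_0\neq0$, so $\dd f_0\wedge\dd f_{0,\red}=0$.
\item The Mattei--Moussu factorisation and quasi-homogeneity then give $f_0=f_{0,\red}^k$.
\item $0>\ord\ps_0\ge\tfrac1k\bigl(1-\kappa_3(\van{f_{0,\red}})\bigr)$ forces $\kappa_3>1$ for the reduced surface, so \autoref{prop:small-invariant-ADE} identifies it.
\item Your weight count, using $\kappa_3-1\le w_3$ from \autoref{tab:ADE}, makes $g$ constant.
\end{itemize}
Your Step~2 case analysis then becomes unnecessary. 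Note also that a triple root of $h_3$ does not give invariant $(2,3,\infty)$; it simply contradicts maximality of the centre.
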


\begin{proof}
By degeneration to the normal cone, we may assume without loss of generality that $\X = \bA^3$ with weighted coordinates $(x^a,y^b,z^c)$, that $\ps = \lt{\ps}$ is quasi-homogeneous, and that $\Y$ is defined by the vanishing of a quasi-homogeneous polynomial $f$.  Note, however, that since reducedness is not preserved by degeneration, we must now allow the possibility that $\Y$ is not reduced.  

Let $f_0$ be an equation for the reduced hypersurface $\Y_0:=\Y_{\red}$. Recall that a polyvector field is tangent to $\Y$ if and only if it is tangent to $\Y_0$.  Hence $\ps$ is tangent to both $\Y$ and $\Y_0$, which means that the vector fields $\eta := f^{-1}\hook{\dd f}\ps$ and $\eta_0 := f_0^{-1}\hook{\dd f_0} \ps $ are as well.  Note that $\eta$ and $\eta_0$, if nonzero, are homogeneous of order $\ord[\cntr](\eta)=\ord[\cntr](\eta_0) = \ord[\cntr](\ps) < 0$.  On the other hand, the associated centre is invariant under formal isomorphism of germs, and hence it is preserved by the flow of any vector field tangent to $\Y$; this implies that the order of $\eta$ and $\eta_0$ is non-negative.  We therefore deduce that $\eta=\eta_0 =0$, and hence
\begin{align}
\hook{\dd f_0}\ps = \hook{\dd f}\ps = 0 \label{eq:koszul-closed}
\end{align}
so that $\ps$ is a cycle in the Koszul complexes of $\dd f$ and $\dd f_0$.

Since $f_0$ is reduced, its critical locus has dimension at most one.  Hence the homology of its Koszul complex $(\der{\X},\hook{\dd f_0})$ vanishes in degree two, so that $\ps$ is exact.  Hence
\begin{align}
\ps = \hook{\dd f_0}\cvf{x}\wedge\cvf{y}\wedge\cvf{z} = [\cvf{x}\wedge\cvf{y}\wedge\cvf{z},f_0] \label{eq:its-jacobian}
\end{align}
is the Jacobian Poisson structure of $f_0$. It remains to show that $\van{f_0}=\Y_0$ has  a Du Val or Whitney umbrella singularity.

Since $\ps$ is nonzero and skew-symmetric, its kernel generically has rank one, and hence \eqref{eq:koszul-closed} implies that $\dd f_0 \wedge \dd f = 0$.  Since $f_0$ is reduced, it follows from \cite[p.\ 472, Th\'eor\`eme de factorisation]{Mattei1980} that $f= A(f_0)$ for some $A \in f_0\KK[[f_0]]$.  By quasi-homogeneity, we deduce that $f = f_0^k$ for some $k \ge 0$.  Let $\cntr^0 = \ZCan(\X,\Y_0)$ be the associated centre of $\Y_0$.  Then $\cntr^0$ is defined by the weighted chart $(x^{a/k},y^{b/k},z^{c/k})$, thanks to \autoref{rmk:power-inv}. From  \eqref{eq:its-jacobian} and the fact that the centre $\cntr$ is not conilpotent, we deduce that
\[
0 > \ord[\cntr](\ps) = \ord[\cntr]([\cvf{x}\wedge\cvf{y}\wedge\cvf{z},f_0]) = \tfrac{1}{k}\ord[\cntr^0]([\cvf{x}\wedge\cvf{y}\wedge\cvf{z},f_0]) = \tfrac{1}{k}(1 - \kappa_3(\Y_0)),
\]
so that $\kappa_3(\Y_0) > 1$.  The result now follows from \autoref{prop:small-invariant-ADE}.
\end{proof}

We now deform away from the leading term to deduce the following.
\begin{proposition}\label{prop:nonconilp-du-val-whitney}
Let $p \in \sing{\Y}$ be a point at which $\ZCan(\X,\Y)$ is not conilpotent.  Then the following dichotomy holds:
\begin{itemize}
    \item If $\Y$ has an isolated singularity at $p$, then $p$ is a Du Val point of $(\X,\Y,\ps)$.
    \item Otherwise, $\Y$ has a Whitney umbrella singularity at  $p$.
\end{itemize}
\end{proposition}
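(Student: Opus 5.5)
Let $\cntr = \ZCan(\X,\Y)$ at $p$. By \autoref{prop:non-conilp-leading-term}, the degenerated problem on $\nb{\cntr}$ gives $\lt[\cntr]{\ps} = [\cvf{x}\wedge\cvf{y}\wedge\cvf{z},f_0]$ in suitable weighted coordinates $(x^a,y^b,z^c)$, where $f_0$ is quasi-homogeneous and is the equation of a Du Val singularity or a Whitney umbrella. The computation there also shows $\kappa_3(\Y_0)>1$ for $\Y_0 = \van{\lt[\cntr]{f}}_{\red}$. I would first show that $\Y$ itself has one of the singularities listed in \autoref{prop:small-invariant-ADE}. The leading term $\lt[\cntr]{f}$ equals $f_0^k$ and has weighted order one, while $f$ is reduced. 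The natural approach is to show $k=1$. If $k\ge 2$, then $\ord[\cntr]\ps = \tfrac1k(1-\kappa_3(\Y_0))$, and the invariant of $\Y$ would be $k$ times that of $\Y_0$, so $a_1 \ge 2k \ge 4$. But then $\kappa_2 \le 1/2$ and $\kappa_3\le 3/4<1$. I would then use \autoref{ex:jacobian}-style estimates, combined with the constraint that $\ps$ is tangent to the reduced $\Y$ (Fraenkel-type decomposition, or the Koszul argument of \autoref{prop:non-conilp-leading-term} applied before degeneration), to force $\ord\ps\ge 0$, contradicting non-conilpotence. I expect this step, ruling out non-reduced leading terms, to be the main obstacle. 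The fallback is to argue directly that $\inv{\X,\Y}$ equals the invariant of $f_0$, hence is $<(2,3,6)$, and then invoke \autoref{prop:small-invariant-ADE} for $\Y$.

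Once $\inv{\X,\Y}<(2,3,6)$ at $p$, \autoref{prop:small-invariant-ADE} says $\Y$ is Du Val, a Whitney umbrella, or normal crossings at $p$. Normal crossings has invariant $(2,2)$, so $\cntr$ has codimension two. I would exclude it using \autoref{lem:codim-2-centre}-type order counting: since $\kappa_3=\kappa_2=1$, any bivector tangent to $\{xy=0\}$ has $\ord\ge 0$, so the centre would be conilpotent. This leaves two cases.

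\emph{Isolated case.} Here $\Y$ is Du Val at $p$, and its leading term is $\lt[\cntr]{\ps}=[\cvf{x}\wedge\cvf{y}\wedge\cvf{z},f_0]$ with $f_0$ a Du Val equation, quasi-homogeneous for $\cntr$. This is exactly condition \eqref{it:DV-hgns} of \autoref{prop:3-2-normal-form}, so $p$ is a Du Val point. There is one subtlety for type $D_n$ with $n>4$: $\cntr$ has weights $(2,3,3)$, and the leading term of the equation is $x^2+y^2z$, which is a Whitney umbrella rather than Du Val. In that case the isolatedness of $\sing{\Y}$ must be reconciled with a leading term whose singular locus is non-isolated. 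I would handle this by applying Fraenkel (\autoref{prop:fraenkel}) directly: $\ps = g[\mu,f]+f\eta$. Non-conilpotence forces $\ord(g[\mu,f])<0$, hence $g(p)\ne 0$, so $\ps$ has an isolated zero and $p$ is a Du Val point by definition.

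\emph{Non-isolated case.} Here the only option left in the list is the Whitney umbrella, and the conclusion is immediate.
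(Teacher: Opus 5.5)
\textbf{The gap is in the non-isolated case.} There your whole argument rests on the claim $k=1$, equivalently $\inv{\X,\Y}<(2,3,6)$ at $p$. You leave this unproved, and neither of your suggested tools works there.
\begin{itemize}
\item Fraenkel's decomposition $\ps=g[\cvf{x}\wedge\cvf{y}\wedge\cvf{z},f]+f\eta$ needs an isolated critical point. In general $\xi:=f^{-1}\hook{\dd f}\ps$ is only a cycle in the Koszul complex of $\dd f$. When $\sing{\Y}$ is a curve, that complex has nonzero homology in degree one, so there is no a priori reason for $\xi$ to be $\hook{\dd f}$ of a bivector.
\item The Koszul argument of \autoref{prop:non-conilp-leading-term} works only after degeneration, where $\xi$ is homogeneous of negative order and therefore zero. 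Before degeneration you only know $\ord\xi\ge 0$.
\item $k=1$ is not a formality. A reduced equation can have a non-reduced leading term for its associated centre: the irreducible curve $(x^2-y^3)^2+xy^5$ has invariant $(4,6)$ and leading term $(x^2-y^3)^2$. So reducedness of the leading term must come from the Poisson structure.
\end{itemize}
Your ``fallback'' of arguing that $\inv{\X,\Y}$ equals the invariant of $f_0$ simply restates the goal.

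\textbf{The missing idea} is to recover $\Y$ from $\ps$ rather than from $\lt{f}$, which is what the paper does. Since $\ps$ vanishes along the singular curve, \autoref{prop:3-2-normal-form} rules out a Du Val leading term. Hence $\lt{\ps}$ is the Jacobian structure of $W=x^2-y^2z$ for the centre $(x^2,y^3,z^3)$, up to rescaling the filtration. \autoref{prop:whitney-normal-form}, whose hypothesis is exactly that $\van{\ps}$ is non-isolated, then gives $\ps=u\cdot([\cvf{x}\wedge\cvf{y}\wedge\cvf{z},W]+WA(W)\cvf{y}\wedge\cvf{z})$. A Seidenberg argument for the Hamiltonian vector field of $z$ shows that $\van{W}$ is the only Poisson surface through $p$. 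This forces $\Y=\van{W}$, which settles $k\ge 2$ and the classification at once.

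\textbf{The isolated case} is fine provided you run Fraenkel first, as the paper does. This gives $\ord\ps\ge 1-\kappa_3$, so non-conilpotence forces $\kappa_3>1$ (hence $k=1$ and a Du Val singularity) and $g(p)\neq 0$. However, the step ``$g(p)\neq0$, so $\ps$ has an isolated zero'' in your $D_n$ subcase still needs justification. Pass to coordinates in which $f$ is quasi-homogeneous (\autoref{rmk:D-is-different}), so that $\lt{\ps}=g(p)[\cvf{x}\wedge\cvf{y}\wedge\cvf{z},f]$, and then apply \autoref{prop:3-2-normal-form}.
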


\begin{proof}
Let $(x^{a_1},y^{a_2},z^{a_3})$ be coordinates defining the associated centre $\ZCan(\X,\Y)$ at $p$. 

Suppose first that $\Y$ has an isolated singularity at $p$.  Then by \autoref{prop:fraenkel}, we have $\ps = g[\cvf{x}\wedge\cvf{y}\wedge\cvf{z},f]+f \eta$ for some functions $g,f$ and some bivector $\eta$.  This implies that $\ord[\ZCan(\X,\Y)](\ps) \ge 1-\kappa_3$.  Since $\ZCan$ is not conilpotent, we must have $\kappa_3 > 1$, so that the singularity is Du Val by \autoref{prop:small-invariant-ADE}, and moreover $g(0)\neq 0$.  Switching to weighted coordinates $(\tilde x^a,\tilde x^b,\tilde x^c)$ in which $f$ is quasi-homogeneous as in \autoref{rmk:D-is-different}, we obtain a centre $\cntr$ in which the leading term of $\ps$ has the form of \autoref{prop:3-2-normal-form} part \eqref{it:DV-hgns}, so that $p$ is a Du Val point.

Now suppose that $p$ is a non-isolated singularity of $\Y$. Since $\ps$ vanishes on $\sing{\Y}$, the point $p$ is also a non-isolated zero of $\sigma$; in particular, $p$ is not a Du Val point of $(\X,\Y,\ps)$.  Therefore by \autoref{prop:3-2-normal-form}, the leading term cannot have a Du Val point either. Hence by \autoref{prop:non-conilp-leading-term}, the leading term must be the Jacobian Poisson structure of a Whitney umbrella, so that $\ZCan(\X,\Y)$ is given by the weighted coordinates $(x^2,y^3,z^3)$.  By \autoref{prop:whitney-normal-form}, we may assume without loss of generality that 
\[
\ps = u\cdot \rbrac{[\cvf{x}\wedge\cvf{y}\wedge\cvf{z},W] + WA(W) \cvf{y}\wedge\cvf{z}}
\]
where $W = x^2-y^2z$ and $u$ is an invertible function.  We claim that $\Y$ is given locally by $W=0$.  In fact, we claim something much stronger: the surface $W=0$ is the only Poisson surface containing the origin in this formal chart.  

To see this, note first that since the condition of being a Poisson subvariety is invariant under rescaling the bivector by an arbitrary invertible function, we may assume without loss of generality that $u=1$, i.e.\ that
\[
\ps = [\cvf{x}\wedge\cvf{y}\wedge\cvf{z},W] + WA(W) \cvf{y}\wedge\cvf{z}.
\]
Now note that the vanishing locus of $\ps$ is given by the equations
\[
W_x + WA(W) = W_y = W_z = 0,
\]
which define a non-reduced scheme supported on the $z$-axis $x=y=0$.  Hence the singular locus of $\Y$ must be this same axis.  Moreover, the surface $z=0$ is not tangent to $\ps$.  Hence it suffices to show that the only Poisson surface in the complement $z\neq 0$ that contains the $z$-axis is the Whitney umbrella $W$.

Furthermore, any Poisson surface must be preserved by the Hamiltonian vector field of $z$,  which has the form
\[
\zeta := (2x + WA(W))\cvf{y} - 2yz \cvf{x}.
\]
Hence it suffices to show that $W=0$ is the only irreducible surface containing the $z$-axis that is preserved by $\zeta$. 

For this, we may pass to the cover defined by $\sqrt{z}$ over the locus $z \neq 0$, i.e.~work in the larger ring $\KK((z^{1/2}))[[x,y]]\supset \KK[[x,y,z]]$, viewing $\zeta$ as a derivation of this larger ring over the base field $\KK((z^{1/2}))$.     There, we may factor $W = (x+\sqrt{z}y)(x-\sqrt{z}y)$, so that the surface $W=0$ has two smooth branches $x = \pm \sqrt{z} y$.  

Since $W \in (x,y)^2$, the linearization of $\zeta$ along the $z$-axis $x=y=0$ is 
\[
\zeta_0 := 2x \cvf{y} - 2yz\cvf{x}
\]
with eigenvalues $\pm 2\sqrt{-z}$.  Since the ratio of the eigenvalues is $-1$, a theorem of Seidenberg~\cite{Seidenberg1968} ensures that there are exactly two invariant subvarieties passing through the $z$-axis.  Hence they must be the two branches of the Whitney umbrella, as desired.
\end{proof}

We now construct the sequence of blowups that reduces all singularities to Du Val points.  The approach is similar to what we did for curves in threefolds above.  Namely, let us denote by 
\[
\inv{\X,\Y,\ps} := \inv{\X\setminus \YDV,\Y\setminus \YDV}
\]
the invariant of the pair formed by the non-Du Val locus.  Parallel to the case of dimension $(3,1)$, we have that $\inv{\X,\Y,\ps} \leq \inv{\X,\Y}$, and this invariant detects when the singularities are Du Val points:
\begin{lemma}\label{lem:DV-inv}
    $\sing{\Y} = \YDV$ if and only if $\inv{\X,\Y,\ps} = (1)$. 
\end{lemma}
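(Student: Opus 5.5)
This is essentially a restatement of \autoref{thm:ATW} part (2), applied to the open pair $(\X\setminus\YDV,\Y\setminus\YDV)$, in exact parallel with \autoref{lem:nn-inv}. By definition, $\inv{\X,\Y,\ps} = \inv{\X\setminus \YDV,\Y\setminus \YDV}$, and $\Y\setminus\YDV$ is a surface of pure codimension one in the threefold $\X\setminus\YDV$. The plan is to show that both conditions in the lemma are equivalent to smoothness of $\Y\setminus\YDV$.

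Before that, I would check that $\X\setminus\YDV$ is an open suborbifold, i.e.\ that $\YDV$ is closed. Each Du Val point is an isolated singular point of $\Y$, so $\YDV$ is a discrete subset of $\sing{\Y}$. It is closed because any accumulation point of it in $\sing{\Y}$ would be a non-isolated singular point; by upper semicontinuity and \autoref{prop:small-invariant-ADE}, such a point cannot be a limit of Du Val points. More simply, Du Val points are isolated points of the closed set $\sing{\Y}$ whose local invariant is locally constant. This is the only point where any care is needed, and I expect it to be a routine detail.

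With this in hand, \autoref{thm:ATW} part (2) with $k=1$ says that $\Y\setminus\YDV$ is smooth of pure codimension one if and only if its invariant equals $(1,\infty,\ldots) = (1)$. Moreover, $\Y\setminus\YDV$ is smooth exactly when every singular point of $\Y$ lies in $\YDV$. Since $\YDV\subset\sing{\Y}$ by definition, this is the same as $\sing{\Y}=\YDV$, which gives both implications. The degenerate case $\Y\setminus\YDV=\varnothing$ cannot occur, because Du Val points are isolated and $\Y$ has pure dimension two.
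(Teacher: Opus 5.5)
Your proof is correct and is the argument the paper intends: the paper gives no separate proof, relying, as for \autoref{lem:nn-inv}, on \autoref{thm:ATW} part (2) with $k=1$ applied to the pair $(\X\setminus\YDV,\Y\setminus\YDV)$, whose singular locus is $\sing{\Y}\setminus\YDV$. One correction concerns your justification that $\YDV$ is closed: upper semicontinuity of the invariant does not by itself prevent Du Val points from accumulating; the right reason is that the isolated points of the closed set $\sing{\Y}$ are its zero-dimensional irreducible components, which are finitely many (locally finitely many in the analytic setting), so any subset of them is closed.
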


We define a centre $\cntr(\X,\Y,\ps)$ on $\X$ supported in $\Y\setminus \YDV$, as follows:
\begin{itemize}
    \item If $\inv{\X,\Y,\ps} \neq (2,3,3)$, let $\cntr(\X,\Y,\ps) := \ZCan(\X\setminus \YDV,\Y\setminus \YDV)$.
    \item If $\inv{\X,\Y,\ps} = (2,3,3)$,   
    then by \autoref{prop:small-invariant-ADE}, the singular locus of $\Y\setminus \YDV$ is a disjoint union $\Z \sqcup \C$ where $\Z$ is a zero-dimensional set along which $\Y$ has Du Val singularities of type $D$, and $\C$ is the union of the one-dimensional components of $\sing{\Y}$ (along which $\Y$ has Whitney umbrella or two-component normal crossings singularities).   In this case, we take $\cntr(\X,\Y,\ps)$ to be the disjoint union of the associated centre $\ZCan(\X,\Y)|_\Z$ along $\Z$ and the unweighted centre defined by $\C$.
\end{itemize}

\autoref{thm:3-2-resolve} now follows immediately from the following result, combined with \autoref{lem:DV-inv} and the well-orderedness of the set of invariants.
\begin{proposition}
    The centre $\cntr = \cntr(\X,\Y,\ps)$ is conilpotent, and 
    \begin{align}
    \inv{\Bl{\X,\Y,\ps}{\cntr}} < \inv{\X,\Y,\ps} 
    \end{align}
\end{proposition}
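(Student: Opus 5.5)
The proof will mirror that of \autoref{prop:3-1-invariant-decrease}, with two parts: the invariant decreases, and the centre is conilpotent.

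\textbf{The invariant decreases.} First I reduce to the non-Du Val locus. The centre is supported away from $\YDV$, so Du Val points of $(\X,\Y,\ps)$ pull back to Du Val points of the blown-up triple. Applying \autoref{rmk:local-invariant} to $\X'\setminus\YDV'\subset \X'\setminus b^{-1}(\YDV)$, it then suffices to prove
\[
\inv{\Bl{\X\setminus \YDV,\Y\setminus\YDV}{\cntr}}<\inv{\X\setminus\YDV,\Y\setminus\YDV}.
\]
There are two cases.
\begin{itemize}
\item If $\inv{\X,\Y,\ps}\neq(2,3,3)$, the centre is the associated centre, and \autoref{thm:ATW} applies directly.
\item If the invariant equals $(2,3,3)$, then along $\Z$ we blow up the associated centre, so \autoref{thm:ATW} again applies locally. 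Along $\C$, $\Y$ has only normal crossings of multiplicity two (invariant $(2,2)<(2,3,3)$) or Whitney umbrella points. The curve $\C$ is smooth by \autoref{prop:small-invariant-ADE}, and so are its components. Blowing up a smooth curve of double points along which $\Y$ is, transversally, a node or a cusp-like umbrella separates the branches. Concretely, in coordinates $W=x^2-y^2z$ the strict transform in the chart $x=tx',y=ty'$ is $x'^2-y'^2z$ again, but only along the locus $z=0$ of pinch points. So I need to check via explicit charts that the maximal invariant over $\C$ drops below $(2,3,3)$. I expect the strict transform near a former pinch point to become smooth or to have invariant $(2,2)$ or lower, though this chart computation has to be carried out carefully.
\end{itemize}

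\textbf{Conilpotence.} If the associated centre at $p$ is not conilpotent, \autoref{prop:nonconilp-du-val-whitney} shows that $p$ is either a Du Val point (excluded, since we removed $\YDV$) or a Whitney umbrella point. At a Whitney umbrella point the invariant is $(2,3,3)$, which is exactly the case where we replaced the associated centre by the unweighted centre of $\C$. So whenever the invariant is not $(2,3,3)$, or at points of $\Z$ (isolated $D_n$ singularities which are not Du Val points of the triple), the associated centre is conilpotent.

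It remains to show the unweighted centre defined by $\C$ is conilpotent. By \autoref{ex:unweighted}, this means the conormal Lie algebra of $\C$ is abelian, i.e.\ $\{x_i,x_j\}\in\cJ_\C^2$ for the normal coordinates, condition \eqref{eq:cntr-nilp}. Since $\ps$ vanishes on $\sing\Y\supset\C$, $\C$ is Poisson. I would check conilpotence on the completion at each point of $\C$:
\begin{itemize}
\item At a Whitney umbrella point, use the normal form of \autoref{prop:whitney-normal-form}. Its Jacobian part has order $0$ along the $z$-axis, by \autoref{ex:whitney-lt} and \autoref{ex:whitney-conilp}. The extra term $WA(W)\cvf{y}\wedge\cvf{z}$ has order at least $2-2=0$ for the centre $(x^1,y^1,z^\infty)$.
\item At a normal crossings point $xy=0$, $\ps$ is tangent to both branches, so $\{x,y\}\in(x)\cap(y)=(xy)$. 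Also $\{x,z\},\{y,z\}$ vanish on $\C$, and tangency gives $\{x,z\}\in(x)$ and $\{y,z\}\in(y)$. Condition \eqref{eq:cntr-nilp} only needs $\{x,y\}\in(x,y)^2$, which holds.
\end{itemize}

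The main obstacle is the invariant decrease along $\C$ through the Whitney umbrella points. There the ordinary blowup must be checked by hand in charts, since \autoref{thm:ATW} does not apply to a non-associated centre.
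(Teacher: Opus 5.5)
You have the same architecture as the paper: restrict to the non-Du Val locus, use \autoref{thm:ATW} for associated centres, use \autoref{prop:nonconilp-du-val-whitney} for conilpotence away from $\C$, and treat the unweighted centre of $\C$ separately. Your normal crossings computation is exactly the paper's ``explicit'' alternative.

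\textbf{Conilpotence at pinch points.} Here there is a genuine gap. \autoref{prop:whitney-normal-form} needs a hypothesis: the leading term of $\ps$ for the centre $(x^2,y^3,z^3)$ must be $[\cvf{x}\wedge\cvf{y}\wedge\cvf{z},W]$. You only know this via \autoref{prop:non-conilp-leading-term}, i.e.\ at pinch points where $\ZCan(\X,\Y)$ is \emph{not} conilpotent. But the $(2,3,3)$ branch of the construction uses the unweighted centre of $\C$ at \emph{every} pinch point. At pinch points where the associated centre is conilpotent, the normal form is unavailable and your argument says nothing. Examples are $\ps=0$, or $\ps=x\,[\cvf{x}\wedge\cvf{y}\wedge\cvf{z},W]$, which has zero linearization and so cannot be brought to a form $u\cdot(\ldots)$ with $u$ a unit. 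The fix is the paper's closedness argument:
\begin{itemize}
\item Since $\ps$ vanishes along $\C$, conilpotence of the unweighted centre amounts to vanishing of the induced bracket on the conormal sheaf $\coN{\C}$ (\autoref{ex:unweighted}).
\item That bracket is an $\cO{\C}$-linear map $\wedge^2\coN{\C}\to\coN{\C}$ between locally free sheaves on the smooth curve $\C$.
\item Your computation shows it vanishes at normal crossings points. These are dense in every component of $\C$ because pinch points are isolated, so the bracket vanishes identically.
\end{itemize}
This makes a separate pinch-point check unnecessary.

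\textbf{Invariant drop over $\C$.} You leave this open, and your chart description (``$x'^2-y'^2z$ again, but only along $z=0$'') is off. The check is immediate. On $\dnco{\X}{\cntr}$ write $x=\tilde t\tilde x$ and $y=\tilde t\tilde y$. Then $x^2-y^2z=\tilde t^2(\tilde x^2-\tilde y^2z)$ and $xy=\tilde t^2\tilde x\tilde y$. Both strict transforms are singular only along $\tilde x=\tilde y=0$, which is exactly the locus removed in $\dnco{\X}{\cntr}$. In slice charts they become $\tilde x^2=z$, $1=\tilde y^2z$, and two disjoint smooth branches. So the strict transform is smooth over $\C$, as the paper asserts. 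Every point of invariant $(2,3,3)$ lies in $\Z\cup\C$, so combining this with \autoref{thm:ATW} over $\Z$ gives the strict decrease.
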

    
\begin{proof}
To see that the invariant decreases, we argue as in the proof of \autoref{prop:3-1-invariant-decrease} that it suffices to establish the inequality
\[
\inv{\Bl{\X\setminus \YDV,\Y\setminus \YDV}{\cntr}} < \inv{\X\setminus\YDV,\Y\setminus \YDV} .
\]
This inequality does indeed hold: either we blow up the associated centre (which reduces the invariant by \autoref{thm:ATW}), or we perform the unweighted blowup of the curve $\C$ (which completely resolves the singularities lying over $\C$).

The conilpotence is immediate from \autoref{prop:nonconilp-du-val-whitney}, unless $\inv{\X,\Y,\ps}=(2,3,3)$.  In this case, we must also check that the curve $\C$ is conilpotent.  This is a closed condition, so we can check it at the generic point of $\C$, where the invariant is $(2,2)$ and hence the associated centre at the generic point of $\C$ is the unweighted centre defined by $\C$.  Hence the result follows from \autoref{prop:nonconilp-du-val-whitney} again.  Alternatively, and more explicitly, we can note that $\Y$ generically has normal crossings singularities on $\C$, so near a generic point there are formal coordinates $(x,y,z)$ with $\Y=\van{xy}$ and $\C=\van{x,y}$.  Since $\ps$ is tangent to $\Y$, it must locally lie in the subalgebra of $\der{\X}$ generated by $\logcvf{x},\logcvf{y},\cvf{z}$ all of which have non-negative order with respect to the unweighted centre $(x^1,y^1,z^\infty)$ defining $\C$.
\end{proof}

\bibliographystyle{hyperamsalpha}
\bibliography{weighted-Poisson-blowups}

\end{document}